\newcommand{\GL}{\mathrm{GL}}
\newcommand{\Q}{\mathbb{Q}}
\newcommand{\Z}{\mathbb{Z}}
\newcommand{\N}{\mathbb{N}}
\newcommand{\Mat}{\mathrm{Mat}}
\theoremstyle{plain}
\newtheorem{thm}{Theorem}[section]
\newtheorem{lem}[thm]{Lemma}
\newtheorem{pro}[thm]{Proposition}
\newtheorem{cor}[thm]{Corollary}
\newtheorem*{claim*}{Claim}
\newtheorem{rem}[thm]{Remark}
\theoremstyle{remark}
\newtheorem{qun}[thm]{Question}
\newtheorem{exm}[thm]{Example}
\newtheorem{dfn}[thm]{Definition}
\numberwithin{equation}{section}
\numberwithin{table}{section}
\newcommand{\bsm}{\boldsymbol{m}}
\newcommand{\tensor}{\otimes}
\newcommand{\mfp}{\mathfrak{p}}
\newcommand{\Gri}{\ensuremath{\mathcal{O}}}
\renewcommand{\epsilon}{\varepsilon}
\renewcommand{\phi}{\varphi}
\renewcommand{\theta}{\vartheta}
\newcommand{\mcO}{\mathcal{O}}
\newcommand{\Qp}{\mathbb{Q}_{p}}
\newcommand{\Op}{\mcO_{\mfp}}
\newcommand{\tud}{\textup{d}}
\newcommand{\ideal}{\triangleleft}
\newcommand{\bse}{\boldsymbol{e}}
\newcommand{\Zp}{\mathbb{Z}_{p}}
\def \bff {{\bf f}}
\def \bfg {{\bf g}}
\DeclareMathOperator{\Spec}{Spec}
\def \diag {\textup{diag}}
\def \co {\textup{co}}
\def \bff {{\bf f}}
\def \bfg {{\bf g}}
\def \bfx {{\bf x}}
\def \bfX {{\bf X}}
\def \bfy {{\bf y}}
\def \bfY {{\bf Y}}
\def \Fp {\ensuremath{\mathbb{F}_p}}
\def \mcR {\ensuremath{\mathcal{R}}}
\def \mcN {\ensuremath{\mathcal{N}}}
\def \p {\ensuremath{\mathfrak{p}}}
\def \Fq {\ensuremath{\mathbb{F}_q}}
\def \Zp  {\mathbb{Z}_p}
\def \Mat {\mathrm{Mat}}
\def \bsr {\boldsymbol{r}}
\def \bss {\boldsymbol{s}}
\author{Seok Hyeong Lee}\address{Center for Quantum Structures in Modules and Spaces, Seoul National University, Seoul 08826, South Korea}\thanks{The first author was supported by the National Research Foundation of Korea(NRF) grants No. 2020R1A5A1016126 and No. RS--2024-00462910.}\email{lshyeong@snu.ac.kr}
\author{Seungjai Lee}\thanks{The second author was supported by Incheon National University Research Grant in 2024.} \address{Department of Mathematics, Incheon National University, Incheon 22012, South Korea}\email{seungjai.lee@inu.ac.kr}
\subjclass[2020]{11M41, 17B30, 20E07}
\keywords{Cotype zeta functions, cocylic lattices, functional equations, cocyclic subalgebras of Lie algebras}
\begin{document}
\title{Cotype zeta functions enumerating subalgebras of $R$-algebras}
\begin{abstract}
We introduce and study subalgebra cotype zeta functions, multivariate zeta functions enumerating fixed-index subalgebras of $R$-algebras of a given cotype. This generalizes and unifies previous works on subalgebra zeta functions and cotype zeta functions of $R$-algebras. We prove the local functional equations for the generic Euler factors of these zeta functions, and give an explicit formula for the subalgebra cotype zeta function of a general $\Z$-Lie algebra $L$ of rank 3. We also give an asymptotic formula for the number of subalgebras $\Lambda$ of $L$ of index at most $X$ for which $L/\Lambda$ has rank at most, answering a question of Chinta, Kaplan, and Koplewitz. In particular, we show that unlike $\Z^3$, $\Z$-Lie algebras of rank 3 with additional multiplication structure exhibit different distribution of cocyclic subalgebras. 
\end{abstract}

	
\allowdisplaybreaks
	\maketitle
\section{Introduction}\label{sec:intro}
\subsection{Background and motivations} 
 Let $R$ be a Dedekind domain, finitely generated as a ring. Let $L$ be an $R$-algebra, free and finitely generated as an $R$-module endowed with a multiplication $L\otimes_{R}L\rightarrow L$ -- not necessarily associative, commutative, or unital. A \textit{subalgebra} of $L$ is an $R$-submodule $\Lambda$ that is stable under the given multiplication. For $m\in\N$, let $a_{m}(L)$ denote the number of subalgebras $\Lambda$ of index $|L:\Lambda|=m$ in $L$, where $|L:\Lambda|$ denotes the cardinality of  $L/\Lambda$. Define the \textit{subalgebra zeta function of $L$} as the Dirichlet generating series
\begin{align}\label{eq:uni.subalg}
		\zeta_{L}(s)=\sum_{m=1}^{\infty}a_{m}(L)m^{-s},
	\end{align}
where $s$ is a complex variable. 

Since the number of all $R$-submodules of $R^{d}$ of index $m$ is bounded polynomially as a function of $m$, the subalgebra zeta function of $L$ defines an analytic function in some complex right half-plane, providing asymptotics for the number of subalgebras of index less than $X$ as $X\rightarrow \infty$. For instance, consider $\Z^{d}$ as a $\Z$-algebra of rank $d$, equipped with null multiplication. It is known that for $d\geq1$, 
\begin{equation}\label{eq:Zd.uni}
    \zeta_{\Z^{d}}(s)=\zeta(s)\zeta(s-1)\cdots\zeta(s-(d-1)),
\end{equation}
where $\zeta(s)=\sum_{m=1}^{\infty}m^{-s}$ is the Riemann zeta function (\cite{LS} contains five different proofs for this). This tells us certain asymptotics for the number of finite-index sublattices of the integer lattice $\Z^d$. For instance, as $X\rightarrow \infty$ we have
\begin{align*}
    s_{X}(\Z^d):=\sum_{m<X}a_{m}(\Z^d)&=\#\{\textrm{sublattices of $\Z^d$ of index $m<X$}\}\\
    &=\frac{\zeta(d)\zeta(d-1)\cdots\zeta(2)}{d}X^{d}+O(X^{d-1}\log(X)).
\end{align*}
Assume now that 	$R=\mcO$ is the ring of integers of a number field $K$. For a (non-zero) prime ideal $\mfp\in\Spec(\mcO)$ we write $\mcO_{\mfp}$ for the completion of $\mcO$ at $\mfp$, a complete discrete valuation ring of characteristic zero and residue field
	$\mcO/\mfp$ of cardinality $q$ and characteristic $p$,
	say. Let $L(\Gri_{\mfp}):=L\tensor_\mcO\Gri_{\mfp}$. Primary decomposition yields the Euler product 
		\begin{equation*}
		\zeta_{L}(s)
		=\prod_{\mfp\in\Spec(\mcO)\setminus\{(0)\}}\zeta_{L(\Gri_{\mfp})}(s),\label{eq:euler}
	\end{equation*}
	expressing the ``global'' zeta function $\zeta_{L}(s)$ as an infinite
	product of ``local'' zeta function $\zeta_{L(\Gri_{\mfp})}(s)$. It is a deep result that each individual local subalgebra zeta function $\zeta_{L(\Gri_{\mfp})}(s)$ is a rational function in the parameter $q^{-s}$ (cf. \cite[Theorem 3.5]{GSS}). 

Zeta functions counting finite-index subalgebras or certain families of subobjects of an $R$-algebra have been extensively studied over the last few decades; see \cite{duS,duSW,KLL,KNV,Lee1,LV, Rossmann2} and references therein. On one direction, one studies the properties of local zeta function $\zeta_{L(\Gri_{\mfp})}(s)$. In \cite[Theorem 1.3]{duSG} and \cite[Theorem A]{Voll}, the authors proved that for a given $\mcO$-algebra $L$ of rank $d$, there are smooth quasi-projective varieties $V_i$ ($i\in I$, $I$ finite), defined over $K$, and rational functions $W_{i}(X,Y)\in\mathbb{Q}(X,Y)$ such that for almost all $\mfp\in\Spec(\mcO)$,
    \begin{equation}\label{eq:duSG}
	\zeta_{L(\Gri_{\mfp})}(s)=\sum_{i\in I}\left|\overline{V_i}(\mathbb{F}_q)\right|W_i(q,q^{-s}),
\end{equation}
where $\overline{V_i}$ denotes the reduction modulo $\mfp$ of a fixed $\Gri_{\mfp}$-model of $V_i$ and $\left|\overline{V_i}(\mathbb{F}_q)\right|$ denotes the number of $\Fq$-points of $\overline{V_i}$. 
\begin{rem}
Strictly speaking, \cite[Theorem 1.3]{duSG} and \cite[Theorem A]{Voll} only covers the case $K=\Q$, but the extension to an arbitrary number
field $K$ is straightforward. Please see \cite[Section 5]{Rossmann1} or \cite[Theorem~4.1]{Rossmann2} for details.
\end{rem}
It is of great interest to understand how $\zeta_{L(\Gri_{\mfp})}(s)$ varies with $\mfp$: called the \textit{uniformity} problem.
\begin{dfn}\label{def:uniform.zp}
		The (global) zeta function $\zeta_{L}(s)$ is \textit{finitely uniform} if there exist finitely many rational functions $W_{1}(X,Y),\ldots,W_{k}(X,Y)\in\Q(X,Y)$ for $k\in\mathbb{N}$ such that, for every (non-zero) prime ideal $\mfp\in\Spec(\mcO)$, 
		\[\zeta_{L(\Gri_{\mfp})}(s)=W_{i}(q,q^{-s})\]
		for some $i\in\{1,\ldots,k\}$. It is  \textit{uniform} if $k=1$ for all but finitely many $\mfp\in\Spec(\mcO)$, and \textit{non-uniform} if it is not finitely uniform.  
	\end{dfn}

Furthermore, in \cite[Theorem A]{Voll} Voll showed that for almost all $\mfp$, the local zeta function $\zeta_{L(\Gri_{\mfp})}(s)$ satisfies the following functional equation: 
\begin{equation}\label{eq:fun.eq.uni}  
    \left.\zeta_{L(\Gri_{\mfp})}(s)\right|_{q\rightarrow q^{-1}}=(-1)^{d}q^{\binom{d}{2}-ds}\zeta_{L(\Gri_{\mfp})}(s).
\end{equation}
\begin{rem}
  The operation $q \rightarrow q^{-1}$ in \eqref{eq:fun.eq.uni}
  requires some clarification. If a single rational
  function $W(X,Y)\in\Q(X,Y)$ exists such that the equality
  $\zeta_{L(\Gri_{\mfp})}(s) = W(q,q^{-s})$ holds for almost all $\mfp$, then the
  functional equation  \eqref{eq:fun.eq.uni} means that
  $$W(X^{-1},Y^{-1}) = (-1)^a X^b Y^{c} \, W(X,Y)$$ for
  suitable $a,b,c\in\N_0$.

  In general, the symmetry expressed in~\eqref{eq:fun.eq.uni} refers to Denef-type
    formulae \eqref{eq:duSG} for the zeta functions $\zeta_{L(\Gri_{\mfp})}(s)$. By the Weil conjectures, the numbers $\left|\overline{V_i}(\mathbb{F}_q)\right|$ may be written as alternating sums of Frobenius eigenvalues. The operation $q \mapsto q^{-1}$ inverts these eigenvalues and evaluates $W_{i}$ at $(q^{-1},q^{s})$. That this operation is
  well-defined (i.e.\ independent of the choice of Denef-type formula)
  follows from  \cite[Section~4]{RossmannMPCPS/18}. We refer to
  \cite[Remark~1.7]{VollIMRN/19} for further details.
\end{rem}

On another direction, similar to \eqref{eq:Zd.uni} a number of more refined questions about the distribution of  sublattices of $\Z^d$ and $R^{d}$ can be asked. In \cite{Petro}, Petrogradsky investigated the distribution of sublattices of $\Z^{d}$ whose \emph{cotypes} are given as a certain form. In \cite[Section 8]{Petro}, the author also extended this notion to $R^d$ in our setting, which we will record now.

For a given sublattice $\Lambda\subseteq R^{d}$ of finite index, one has a unique decomposition
\[R^{d}/\Lambda\cong\bigoplus_{\mfp_{i}}\left(\bigoplus_{j}\mcO/\mfp_{i}^{n_{i,j}}\right),\]
where each $\mfp_{i}$ is a non-zero prime ideal in $R$. As explained in \cite[Section 8]{Petro}, by collecting the largest powers of all primes into the first factor, the second largest powers of all primes into the second factor, etc., we obtain a unique decomposition
\[R^{d}/\Lambda\cong\bigoplus_{i=1}^{d}R/I_{i},\]
where $0\neq I_1\subseteq I_2\subseteq\cdots\subseteq I_d$. Denote $\alpha_{i}(\Lambda)=|R/I_{i}|$ for $i\in\{1,\ldots,d\}$, and let us write $\alpha(\Lambda):=(\alpha_{1}(\Lambda),\ldots,\alpha_{d}(\Lambda))$. We call $\alpha(\Lambda)$ the \emph{cotype} of $\Lambda$.
\begin{dfn}
  We define the  \emph{cotype zeta function of $R^{d}$} as the Dirichlet generating series
\begin{align*}
    \zeta_{R^{d}}^{\co}(\bss)=\zeta_{R^{d}}^{\co}(s_1,\ldots,s_{d})=\sum_{\Lambda\subseteq R^{d},|R^{d}/\Lambda|<\infty}\alpha_1(\Lambda)^{-s_1}\cdots\alpha_{d}(\Lambda)^{-s_{d}},
\end{align*}
 where each $s_{i}$ for $1\leq i \leq d$ is a complex variable and $\bss=(s_{1},\ldots,s_d)$. (In \cite{Petro} the author called $\zeta_{R^{d}}^{\co}(\bss)$ the \textit{multiple zeta function}.)
\end{dfn}

Let $R_{\mfp}$ be the completion of $R$ at $\mfp$. In \cite[Lemma 8.1]{Petro} the author also showed that 
\begin{align*}
    \zeta_{R^{d}}^{\co}(\bss)=\prod_{\mfp\in\Spec(R)\setminus\{(0)\}}\zeta_{R_{\mfp}^{d}}^{\co}(s).
\end{align*}

For a given $\Lambda$, the largest $i$ for which $\alpha_{i}(\Lambda)\neq1$ is called the \emph{corank} of $\Lambda$. By convention, $R^d$ has corank 0. A sublattice $\Lambda$ of corank 0 or 1 is called \emph{cocyclic}, i.e., $R^{d}/\Lambda$ is cyclic. Suppose we fix $R=\Z$ and let $N_{d}^{(m)}$ be the number of sublattices $\Lambda$ of $\Z^{d}$ of index less than $X$ such that $\Lambda$ has corank at most $m$. In \cite{Petro} and \cite{NS}, the authors showed that
\begin{equation*}
    N_{d}^{(1)}(X)\sim \frac{\theta_{d}}{d}X^{d},
\end{equation*}
as $X\rightarrow \infty$, where $\theta_{d}=\prod_{p,\,prime}\left(1+\frac{p^{d-1}-1}{p^{d+1}-p^{d}}\right)$. This allowed the authors to observe that the probability that a ``random'' sublattice of $\Z^{d}$ us cocyclic is about 85\% for large $d$. Chinta, Kaplan, and Koplewitz further extended this observation to $N_{d}^{(m)}$ for $1\leq m\leq d$ and observed interesting asymptotics on the distribution of lattices of corank $m$ (\cite[Theorem 1.1 and 1.2]{CKK}). 

\subsection{Main results and organization}
The aim of this paper is twofold. The first aim is to extend the notion of \textit{cotype} zeta functions from $R^{d}$ to an $R$-algebra $L$, and study their properties in a broader context. 

Let  $L$ be an $R$-algebra of rank $d$. Again, for a given sublattices $\Lambda\subseteq L$ there exists a unique decomposition
\[L/\Lambda\cong\bigoplus_{i=1}^{d}R/I_{i},\]
where $0\neq I_1\subseteq I_2\subseteq\cdots\subseteq I_d$. Denote $\alpha_{i}(\Lambda)=|R/I_{i}|$ for $i\in\{1,\ldots,d\}$ as before, and let us analogously define $\alpha(\Lambda):=(\alpha_{1}(\Lambda),\ldots,\alpha_{d}(\Lambda))$  

\begin{dfn}
  We define the  \emph{subalgebra cotype zeta function of $L$} as the Dirichlet generating series
\begin{align*}
    \zeta_{L}^{\co}(\bss)=\sum_{\Lambda\leq L,|L/\Lambda|<\infty}\alpha_1(\Lambda)^{-s_1}\cdots\alpha_{d}(\Lambda)^{-s_{d}},
\end{align*}
enumerating the finite-index subalgebras of $L$ of given cotypes.
\end{dfn}
Note that if $L$ is an $R$-algebra of rank $d$ with null multiplication, then counting subalgebras of $L$ with a given cotype is equivalent to counting sublattices of $R^{d}$ with a given cotype. Furthermore, for $\bss=(s,s,\ldots,s)$, by slight abuse of notation let us write the univariate subalgebra cotype zeta function as
\[\zeta_{L}^{\co}(s):=\zeta_{L}^{\co}(s,\ldots,s).\]
Note that this gives the original subalgebra zeta function \eqref{eq:uni.subalg}
\[\zeta_{L}^{\co}(s,\ldots,s)=\zeta_{L}(s).\]

Hence the zeta function $\zeta_{L}^{\co}(\bss)$ comprises both $\zeta_{L}(s)$ and $\zeta_{R^d}^{\co}(\bss)$. Developing a general theory for $\zeta_{L}^{\co}(\bss)$ therefore has clear implications for this area of study. In fact, throughout this article we will demonstrate how various theorems and explicit computations we record here generalize and unify previous results obtained for $\zeta_{L}(s)$ and $\zeta_{R^d}^{\co}(\bss)$. 

Assume now $R=\mcO$ is the ring of integers of a number field $K$, and write $\mcO_{\mfp}$ for the completion of $\mcO$ at $\mfp$ and $L(\Gri_{\mfp}):=L\tensor_\mcO\Gri_{\mfp}$ as above. Primary decomposition again yields the Euler product 
		\begin{equation*}
		\zeta_{L}^{\co}(s)
		=\prod_{\mfp\in\Spec(\mcO)\setminus\{(0)\}}\zeta_{L(\Gri_{\mfp})}^{\co}(s).
	\end{equation*}

In Section \ref{sec:fun.eq}, by appealing to the $\mfp$-adic machinery  developed in \cite{Voll}, we prove the following results. For $d\in\N$ write $\bfY=(Y_{i})_{1\leq i\leq d}$. 

\begin{thm}  \label{thm:fun.eq}
\begin{enumerate}
    \item There are smooth quasi-projective varieties $V_i$ ($i\in I$, $I$ finite), defined over $K$, and rational functions $W_{i}(X,\bfY)\in\mathbb{Q}(X,\bfY)$ such that for almost all $\mfp\in\Spec(\mcO)$,
    \begin{equation}\label{eq:duSG.co}
	\zeta_{L(\Gri_{\mfp})}^{\co}(\bss)=\sum_{i\in I}\left|\overline{V_i}(\mathbb{F}_q)\right|W_i(q,(q^{-s_{i}})_{1\leq i\leq d}),
\end{equation}
\item For all but finitely many prime ideals $\mfp$ of $\Gri$, we have
    \begin{equation}\label{eq:fun.eq.co}
    \left.\zeta_{L(\Gri_{\mfp})}^{\co}(\bss)\right|_{q\rightarrow q^{-1}}=(-1)^{d}q^{\binom{d}{2}-(s_1+\cdots+s_d)}\zeta_{L(\Gri_{\mfp})}^{\co}(\bss).
    \end{equation}
\end{enumerate}
\end{thm}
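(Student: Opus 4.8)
The plan is to work one prime at a time and follow the $\mfp$-adic machinery of Voll \cite{Voll} (which in turn rests on the cone-integral formalism of du Sautoy--Grunewald \cite{duSG}); the only genuinely new ingredient is a way to record the cotype inside that formalism. Fix a nonzero prime $\mfp$ of $\mcO$, with completion $\mcO_{\mfp}$, uniformiser $\pi$, and residue field of cardinality $q$. After choosing an $\mcO_{\mfp}$-basis of $L(\mcO_{\mfp})$, each finite-index sublattice $\Lambda$ is represented by a unique upper-triangular Hermite-normal-form matrix $M$ over $\mcO_{\mfp}$ whose rows span $\Lambda$, and the condition ``$\Lambda$ is a subalgebra'' becomes, via the structure constants of $L$, a system of polynomial divisibility conditions on the entries of $M$ --- exactly as in the treatment of $\zeta_{L(\mcO_{\mfp})}(s)$. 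The new point is that the cotype $\alpha(\Lambda)=(q^{\lambda_1},\dots,q^{\lambda_d})$ with $\lambda_1\ge\cdots\ge\lambda_d\ge 0$ is recovered from $M$ by $\lambda_{d-k+1}=e_k(M)-e_{k-1}(M)$, where $e_k(M)$ is the minimum of the valuations of the $k\times k$ minors of $M$ and $e_0(M):=0$; hence $\prod_i\alpha_i(\Lambda)^{-s_i}$ equals $q$ raised to an integral linear combination of $e_1(M),\dots,e_d(M)$ whose coefficients are $\Z$-linear in $\bss$. Stratifying according to which minor realises each of these minima turns the ``$\min$'' into further divisibility conditions and exhibits $\zeta_{L(\mcO_{\mfp})}^{\co}(\bss)$ as a finite combination of cone integrals
\[
\int_{\mcW}\prod_{j}|f_j|^{\ell_j(\bss)}\,|d\bsx|
\]
in the sense of \cite[\S2]{duSG}, now in the $d$ variables $Y_i=q^{-s_i}$, with the $\ell_j(\bss)$ being $\Z$-linear forms.

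For part~(1), the statement is then the multivariate analogue of \cite[Theorem~1.3]{duSG} and \cite[Theorem~A]{Voll}: one applies an embedded resolution of the divisor cut out by the $f_j$ together with the standard $\mfp$-adic and model-theoretic arguments to express each cone integral, uniformly for almost all $\mfp$, in the Denef form $\sum_i|\overline{V_i}(\F_q)|\,W_i(q,\bfY)$ with $V_i$ smooth quasi-projective over $K$ and $W_i\in\Q(X,\bfY)$, which is \eqref{eq:duSG.co}. The passage from $\Q$ to a general number field $K$ is as in \cite[\S5]{Rossmann1} and \cite[Theorem~4.1]{Rossmann2}.

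For part~(2), the plan is to rerun Voll's proof of the functional equation in \cite{Voll} while carrying the cotype weight along. Voll's symmetry under $q\mapsto q^{-1}$ rests on two compatible facts: Poincar\'e duality for the reductions $\overline{V_i}$ arising in the resolution (by the Weil conjectures, the inversion of Frobenius eigenvalues) and a Stanley-type reciprocity for the generating functions of the polyhedral cones produced by the resolution, the subalgebra divisibility conditions being precisely what renders the relevant cone data symmetric so that the two combine into \eqref{eq:fun.eq.uni}. In the Hermite parametrisation the cotype weight is supported only on the $d$ ``diagonal'' coordinates --- those recording $\val m_{11},\dots,\val m_{dd}$, equivalently the $e_k$ --- while the remaining $\binom{d}{2}$ ``off-diagonal'' coordinates carry no cotype weight. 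Tracking the weight through Voll's argument therefore replaces every occurrence of $q^{-ds}$ by $q^{-(s_1+\cdots+s_d)}$ and leaves both the factor $q^{\binom{d}{2}}$ and the sign $(-1)^d$ (one $(-1)$ per diagonal direction) untouched, which is exactly \eqref{eq:fun.eq.co}; specialising $s_1=\cdots=s_d=s$ recovers \eqref{eq:fun.eq.uni}, a useful consistency check.

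The main obstacle is to verify that Voll's reciprocity is compatible with this finer grading. Concretely, one has to check that the generating function attached to each chart of the resolution, when graded by the whole cotype vector $(\lambda_1,\dots,\lambda_d)$ rather than only by $\sum_i\lambda_i$, still satisfies the required reciprocity with the correct shift, and that the ordering convention $\alpha_1(\Lambda)\ge\cdots\ge\alpha_d(\Lambda)$ --- i.e.\ the dominance condition $\lambda_1\ge\cdots\ge\lambda_d$ on the Hermite data --- does not introduce an asymmetry. Since the cotype weight is linear in the cone coordinates, the statement one needs is the multivariate form of Stanley reciprocity and the shift can be read off one coordinate at a time, so no new phenomenon should arise; but making this precise requires a careful re-examination of the combinatorial core of \cite{Voll}, and that is where the bulk of the work in part~(2) will lie.
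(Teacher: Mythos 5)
Your proposal for part (1) follows the same general line as the paper (cone integrals, resolution, Denef-type formula), but your route to part (2) has a genuine gap in the choice of parametrisation.

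You work with a Hermite normal form representative $M$ and claim the cotype weight is ``supported only on the $d$ diagonal coordinates $\val m_{11},\dots,\val m_{dd}$, equivalently the $e_k$.'' This is false: in an upper-triangular HNF matrix, the determinantal divisor $e_k(M)$ for $k<d$ is a minimum over \emph{all} $k\times k$ minors, and this minimum is generally realised by minors involving off-diagonal entries (already $e_1(M)=\min_{i\le j}\val m_{ij}$ depends on every entry). Only $e_d(M)=\sum_i\val m_{ii}$, i.e.\ the \emph{index}, lives on the diagonal; the finer cotype data does not. Your own preceding sentence acknowledges this --- you must stratify by which minor achieves each minimum --- but then the claim that the extra weight ``leaves the $\binom{d}{2}$ off-diagonal coordinates unweighted'' becomes untenable, and the asserted cancellation giving the shift $q^{\binom{d}{2}-\sum_i s_i}$ and the sign $(-1)^d$ no longer follows without a genuinely new argument. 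This is not a cosmetic issue: Voll's functional equation is \emph{not} a property of general du~Sautoy--Grunewald cone integrals (those typically have no such symmetry), and Stanley-type reciprocity by itself does not close the gap.

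The paper sidesteps all of this by replacing HNF with the elementary-divisor (Smith) decomposition $M=D\alpha^{-1}$, $D$ diagonal, $\alpha\in\GL_d(\mcO_\mfp)$. In this parametrisation the cotype is literally read off from $D$: the paper computes
$\alpha_1(\Lambda)^{-s_1}\cdots\alpha_d(\Lambda)^{-s_d}=q^{-\sum_{\iota\in I}r_\iota z_\iota}$ with $z_\iota=\sum_{j\le\iota}s_j$, so the weight really is supported on the $r_\iota$'s. The subalgebra condition and the integral over $\alpha$ then fit exactly the form of Voll's $\mfp$-adic integrals $Z_I$ over $\mfp^{|I|}\times\Gamma$, and the passage from the univariate $\zeta_L(s)$ to the cotype version is a linear change of the exponents $((s_\iota)_{\iota\in I},s_d)\mapsto((z_\iota+z_d-\iota(d-\iota)-1)_{\iota\in I},-z_d)$, after which Voll's Theorems~2.2--2.3 and the identity $\widetilde Z(\bss)=\sum_I\binom{d}{I}_{q^{-1}}\widetilde Z_I(\bss)$ give the functional equation at once. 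If you want to salvage your approach, the fix is to abandon HNF and adopt this elementary-divisor/flag-variety parametrisation, where the linearity you need in the cone coordinates is actually true.
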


In particular, for $\bss=(s,\ldots,s)$ we obtain \cite[Theorem A]{Voll}.
\begin{rem}
  The finite collection of prime ideals $\mfp$ of $\Gri$ we are forced to
  exclude in \eqref{eq:fun.eq.uni} or \eqref{eq:fun.eq.co} are essentially those for
  which a chosen principalization of ideals of an algebraic variety exhibits bad reduction modulo~$\mfp$. While we know of no bounds on the size
  or shape of this finite set of ``bad'' prime ideals, it appears to be non-empty in general. 
\end{rem}

One of the prominent examples studied in the literature of subalgebra zeta functions of an $R$-algebra is that of $R$-Lie algebras, i.e., an $R$-algebra equipped with an antisymmetric bi-additive form (so-called ``Lie bracket'' $[-,-]:L\times L\rightarrow L$) satisfying the Jacobi identity. The second aim of this paper is to apply our method for $\Z$-Lie algebra $L$ of rank $d\leq 3$ and study certain asymptotics for the number of sublattices in $L$.  

In Section \ref{sec:dim3}, we obtain explicit formulas for subalgebra cotype zeta functions of $\Zp$-Lie algebras of rank 3, generalizing \cite[Theorem 1.1]{KV} in the context of cotype enumeration.
\begin{thm}\label{thm:cozeta.3dim}
Let $L(\Zp)$ be a 3-dimensional $\Zp$-Lie algebra. Then there is a ternary quadratic form $f(\bfx)\in\Zp[x_1,x_2,x_3]$, unique up to equivalence, such that, for $i\geq 0$
\begin{align*}
    \zeta_{p^{i}L}^{\co}(\boldsymbol{s})=\zeta_{\Z_{p}^3}^{\co}(\boldsymbol{s})-\frac{Z_{f}(s_{1}-2)(p^{2-s_1})^{i+1}(1+p^{1-s_1-s_2})(1-p^{1-2s_1})}{(1-p^{2-s_1})(1-p^{2-s_1-s_2})(1-p^{2-2s_1-s_2-s_3})(1-p^{-1})},
\end{align*}
where $Z_{f}(s)$ is Igusa's local zeta function associated to $f$.
\end{thm}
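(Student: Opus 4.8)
The plan is to reduce the computation of $\zeta_{p^i L}^{\co}(\bss)$ to a $\mfp$-adic integral over the space of sublattices of $\Zp^3$, stratified by cotype, and to isolate the contribution coming from the Lie bracket. First I would recall the parametrization of finite-index sublattices of $\Zp^3$ by upper-triangular matrices in Hermite normal form, so that a sublattice $\Lambda$ of cotype $(\alpha_1,\alpha_2,\alpha_3)$ corresponds to elementary divisors $p^{r_1}\mid p^{r_2}\mid p^{r_3}$ with $\alpha_j = p^{r_{d+1-j}}$ (up to the usual reindexing between elementary divisors and the ideals $I_1\subseteq I_2\subseteq I_3$). The subalgebra condition $[\Lambda,\Lambda]\subseteq\Lambda$ is then a congruence condition on the entries of the Hermite matrix modulo the structure constants of the bracket. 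Since $\dim L=3$, the derived subalgebra $[L,L]$ has rank at most $2$; by the classification of $3$-dimensional Lie algebras (or directly, by the construction in \cite{KV}), the bracket on $p^i L$ is governed, up to $\mathrm{GL}_3(\Zp)$-equivalence, by a single ternary quadratic form $f(\bfx)$ whose level encodes the valuation of the structure constants — this is exactly the form in the statement, and passing from $L$ to $p^i L$ multiplies the bracket by $p^i$, i.e.\ shifts $f$ by $p^{2i}$.

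The main step is to write $\zeta_{p^i L}^{\co}(\bss)$ as $\zeta_{\Zp^3}^{\co}(\bss)$ minus a correction term: $\zeta_{\Zp^3}^{\co}$ counts \emph{all} sublattices with the given cotype weighting, and one subtracts those sublattices $\Lambda$ of $\Zp^3$ that fail to be subalgebras of $p^i L$. Following the method of \cite{KV}, the failure locus is controlled by when the image of $[\Lambda,\Lambda]$ in $L/\Lambda$ is nonzero, and this is detected by whether a certain vector (built from the off-diagonal Hermite entries and evaluated through the bracket, hence through $f$) lies in $\Lambda$. Decomposing the sum over Hermite matrices according to the valuations $r_1\le r_2\le r_3$ of the elementary divisors and according to the $f$-value of the relevant vector, the off-diagonal entries contribute geometric series in $p^{-s_1},p^{-s_1-s_2},p^{-s_1-s_2-s_3}$ (these are the natural cotype variables, since the cotype of $\Lambda$ under a single elementary-divisor increase at level $j$ multiplies $\alpha_1\cdots\alpha_j$), producing the denominators
$(1-p^{2-s_1})(1-p^{2-s_1-s_2})(1-p^{2-2s_1-s_2-s_3})$, while the arithmetic of the quadratic form $f$ on the residual variables assembles precisely into Igusa's local zeta function $Z_f$, evaluated at a shifted argument $s_1-2$ to account for the index-weighting of the direction carrying the bracket. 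The factor $(p^{2-s_1})^{i+1}$ records the truncation by $p^i$ (the smallest sublattice that can fail to be a $p^iL$-subalgebra has its distinguished elementary divisor of valuation $\ge i+1$), the factor $(1+p^{1-s_1-s_2})$ comes from summing over the two ``middle'' Hermite entries that are unconstrained once the $f$-condition is fixed, and $(1-p^{1-2s_1})/(1-p^{-1})$ is the normalization relating the raw Igusa integral to the generating-function form, as in the $d\le 3$ computations of \cite{KV,Voll}.

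The hard part will be the bookkeeping that separates the "$\Zp^3$ part'' cleanly from the "bracket part'': one must verify that the correction term depends on $L$ only through the $\mathrm{GL}_3(\Zp)$-equivalence class of $f$, which requires checking that conjugating the Hermite parametrization by $\mathrm{GL}_3(\Zp)$ acts on the quadratic-form data by equivalence and leaves both $\zeta_{\Zp^3}^{\co}$ and Igusa's $Z_f$ invariant. A secondary technical point is tracking the three distinct cotype variables through the stratification — unlike the univariate case $s_1=s_2=s_3=s$ treated in \cite{KV}, here each elementary-divisor level must be weighted by the correct partial sum $s_1+\cdots+s_j$, and one has to confirm that the quadratic-form fibering interacts only with the $s_1$-direction (equivalently, with the largest ideal $I_1$), which is why only $Z_f(s_1-2)$ and the combination $s_1+s_2$ appear in the numerator while the full $s_1+s_2+s_3$ appears only in the ``volume'' denominator factor. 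Once the equivalence-invariance and the variable bookkeeping are settled, the remaining computation is the standard evaluation of a $\mfp$-adic cone integral with one quadratic-form constraint, and it yields the stated closed form after summing the geometric series and recognizing $Z_f$. Finally, specializing $\bss=(s,s,s)$ and $i=0$ should recover \cite[Theorem 1.1]{KV}, which serves as a consistency check on all the exponents.
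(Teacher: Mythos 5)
Your outline follows the same path as the paper's proof: elementary-divisor parametrization of lattices via $\Gamma M = \Gamma D\alpha^{-1}$, reduction of the subalgebra condition to a valuation inequality involving the quadratic form $f(\bfx) = \bfx^{t}\mathcal{A}_{L_p}\bfx$, and assembly of the count through Igusa's local zeta function. There is, however, a concrete error in your scaling law that would wreck the final exponent. You write that ``passing from $L$ to $p^i L$ multiplies the bracket by $p^i$, i.e.\ shifts $f$ by $p^{2i}$.'' The second clause does not follow from the first and is wrong. In the natural basis $(p^i e_1, p^i e_2, p^i e_3)$ of $p^i L$ one has $[p^i e_j, p^i e_k] = p^{2i}[e_j, e_k] = p^i\sum_l \lambda_{jk}^l (p^i e_l)$, so the structure constants, hence the coefficient matrix $\mathcal{A}_{L_p}$, hence $f$ --- which is \emph{linear} in $\mathcal{A}_{L_p}$ even though it is quadratic in $\bfx$ --- are multiplied by $p^i$, not $p^{2i}$. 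The theorem rests on exactly this: $Z_{p^i f}(s_1-2) = (p^{2-s_1})^{i} Z_f(s_1-2)$, which combined with the single factor $p^{2-s_1}$ already present at $i=0$ produces $(p^{2-s_1})^{i+1}$. Carrying $p^{2i}$ through would yield $(p^{2-s_1})^{2i+1}$, false for every $i\ge 1$. Your parenthetical alternative (``smallest failing sublattice has distinguished elementary divisor of valuation $\ge i+1$'') gestures toward the right exponent, but it contradicts the $p^{2i}$ claim and is not a derivation.

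Beyond that one error, the provenance of the remaining factors is asserted rather than derived, and the heuristics you give do not quite match what actually happens. In the paper, $(1+p^{1-s_1-s_2})$ arises from summing $A_{\{1\}}$ and $A_{\{1,2\}}$ over the subsets $I\subseteq\{1,2\}$ of nontrivial elementary-divisor steps, using the stabilizer index $|\Gamma_{(r_1,0)} : \Gamma_{(r_1,r_2)}| = (p^{-1}+1)p^{2r_2}$ --- not from ``two middle Hermite entries that are unconstrained.'' Likewise $(1-p^{1-2s_1})/(1-p^{-1})$ drops out of the Poincar\'e-series identity linking $P_f^{\star}$, $Z_f^{\star}$ and $Z_f$ for a homogeneous degree-$2$ form in $3$ variables, not from a generic ``normalization.'' The clean separation of the ``$\Zp^3$ part'' from the ``bracket part'' is also not an upfront decomposition: it only emerges after all four $A_I$ are computed and recombined. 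These computations are the actual content of the proof and your sketch leaves them open.
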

Note that for $\bss=(s,\ldots,s)$ we retain \cite[Theorem 1.1]{KV}. 
\begin{rem}
    As stated in \cite[p.4]{KV}, in general no simple identity is known which  relates $\zeta_{L}(s)$ (or $\zeta_{L}^{\co}(\boldsymbol{s})$) with that of $\zeta_{p^{i}L}(s)$ (or $\zeta_{p^{i}L}^{\co}(\boldsymbol{s})$) for $i\in\N_{0}$ yet.
\end{rem}
At the same time, Equation $\eqref{eq:duSG.co}$ in Theorem $\ref{thm:fun.eq}$ allows us to extend the definition of \emph{uniformity} to that of $\zeta_{L}^{\co}(\bss)$. 
\begin{dfn}\label{def:uniform.co}
		The  zeta function $\zeta_{L}^{\co}(\bss)$ is \textit{finitely uniform} if there exist finitely many rational functions \[W_{1}(X,(Y_{i})_{i\in[d]}),\ldots,W_{k}(X,(Y_{i})_{i\in[d]})\in\Q(X,(Y_{i})_{i\in[d]})\] for $k\in\mathbb{N}$ such that, for every (non-zero) prime ideal $\mfp\in\Spec(\mcO)$, 
		\[\zeta_{L(\Gri_{\mfp})}^{\co}(\bss)=W_{i}(q,(q^{-s_{i}})_{i\in[d]})\]
		for some $i\in\{1,\ldots,k\}$. It is  \textit{uniform} if $k=1$ for all but finitely many $\mfp\in\Spec(\mcO)$, and \textit{non-uniform} if it is not finitely uniform.  
	\end{dfn}
 Note that by substituting $s_1=s_2=\cdots=s_d=s$, this definition subsumes Definition \ref{def:uniform.zp}. In Section \ref{sec:dim3} we also prove the following result.
\begin{thm}\label{thm:dim3.uniformity}
    Let $L$ be a $\Z$-Lie algebra of rank 3. Then $\zeta_{L}(s)$ is always finitely uniform, and is uniform/finitely uniform if and only if $\zeta_{L}^{\co}(\bss)$ is uniform/finitely uniform.
\end{thm}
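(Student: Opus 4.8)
The plan is to leverage Theorem \ref{thm:cozeta.3dim}, which already provides an explicit closed-form formula for $\zeta_{p^iL}^{\co}(\bss)$ in terms of Igusa's local zeta function $Z_f(s)$ of a ternary quadratic form $f$ attached to $L$. Setting $i=0$ and specializing $\bss=(s,s,s)$ gives a formula for the local factor $\zeta_{L(\mcO_\mfp)}(s)$ as a rational function in $q=p$ and $q^{-s}$, with the only source of prime-dependence being the behavior of $Z_f$ over $\Zp$. So the first step is to recall the classification of $Z_f(s)$ for a ternary quadratic form $f$ over $\Zp$: after scaling one may assume $f$ is primitive, and then for all but finitely many $p$ (those not dividing $2\det f$), $Z_f$ depends only on whether $f$ is isotropic or anisotropic over $\Qp$, equivalently on whether the discriminant is a square in $\Qp^\times$ — a condition governed by a Legendre/Kronecker symbol. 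This gives exactly two generic rational functions, hence finite uniformity of $\zeta_L(s)$; the finitely many bad primes (dividing $2\det f$) contribute finitely many more rational functions, which does not affect finite uniformity.

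For the equivalence statement, the key observation is that the formula in Theorem \ref{thm:cozeta.3dim} (with $i=0$) expresses $\zeta_{L(\mcO_\mfp)}^{\co}(\bss)$ as $\zeta_{\Zp^3}^{\co}(\bss)$ — which is manifestly uniform, being a fixed rational function of $p$ and the $p^{-s_i}$ independent of $p$ — minus a correction term whose only $p$-dependent ingredient is $Z_f(s_1-2)$. Thus $\zeta_L^{\co}(\bss)$ is uniform (resp. finitely uniform) as a family in $\mfp$ if and only if the function $\mfp \mapsto Z_{f_\mfp}(s)$ is uniform (resp. finitely uniform) in the analogous sense, where $f_\mfp$ is the reduction/base-change of a fixed global ternary quadratic form. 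But this same dichotomy governs $\zeta_L(s)$ via its own explicit formula (the $\bss=(s,s,s)$ specialization of Theorem \ref{thm:cozeta.3dim}), since there too the sole $p$-dependent factor beyond the uniform piece $\zeta_{\Zp^3}(s)$ is $Z_f(s-2)$. Hence $\zeta_L(s)$ and $\zeta_L^{\co}(\bss)$ share the same uniformity behavior, both being controlled by the single arithmetic invariant "is the discriminant of $f$ a square mod $p$."

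Concretely, I would organize the argument as: (i) invoke Theorem \ref{thm:cozeta.3dim} at $i=0$; (ii) separate the $p$-independent rational factors from the factor $Z_f(s_1-2)$, and likewise for the univariate specialization; (iii) cite the explicit evaluation of Igusa zeta functions of ternary quadratic forms (e.g. Denef's or Igusa's computations, or the relevant results in \cite{KV}) to conclude that for $p\nmid 2\det f$ there is exactly one of two rational functions occurring according to a quadratic character, establishing finite uniformity of $\zeta_L(s)$ unconditionally; (iv) note that the correction term in $\zeta_L^{\co}(\bss)$ is the product of $Z_f(s_1-2)$ with a rational function in $p, p^{-s_1}, p^{-s_2}, p^{-s_3}$ that does not depend on $p$, so the number of distinct rational functions realized by $\zeta_L^{\co}(\bss)$ as $\mfp$ varies equals the number realized by $\mfp\mapsto Z_{f_\mfp}(s_1-2)$, which is the same number realized by $\mfp\mapsto Z_{f_\mfp}(s-2)$, which governs $\zeta_L(s)$; deduce the "if and only if."

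The main obstacle is step (iii): one must be careful that the generic ($p\nmid 2\det f$) Igusa zeta function of $f$ genuinely takes only finitely many values and that the "two values according to a character" description is correct, including the degenerate cases where $f$ has rank $<3$ over $\Qp$ or is not primitive — these are handled by absorbing powers of $p$ and reducing to lower-rank forms, whose Igusa zeta functions are again (finitely) uniform. A secondary subtlety is making the phrase "uniform/finitely uniform for $\zeta_L^{\co}(\bss)$" precise in the sense of Definition \ref{def:uniform.co} and checking that multiplying by a fixed (prime-independent) rational function and adding a fixed rational function preserves both notions — this is routine but should be stated, since it is exactly what makes the correction-term bookkeeping transfer the dichotomy from $\zeta_L(s)$ to $\zeta_L^{\co}(\bss)$ and back.
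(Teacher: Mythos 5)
Your proposal follows essentially the same route as the paper: both rest on the single observation that, in Theorem~\ref{thm:cozeta.3dim}, the only $p$-dependence beyond the manifestly uniform piece $\zeta_{\Zp^3}^{\co}(\bss)$ sits inside $Z_f(s_1-2)$, so the uniformity class of $\zeta_L^{\co}(\bss)$ and of its specialization $\zeta_L(s)$ is governed by the same family $\mfp \mapsto Z_{f_\mfp}$. The one genuine difference is in how you propose to establish that this family realizes only finitely many rational functions. The paper settles this in one stroke by citing \cite[Proposition 2.1]{Lee}, which says the count $N_m$ of zeros of $f$ mod $p^m$ is PORC for \emph{any} homogeneous quadratic form, with no need to split into primitive/non-primitive or full-rank/degenerate cases. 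You instead propose to classify $Z_{f_p}(s)$ by hand for $p\nmid 2\det f$, invoking Legendre symbols and reduction to lower-rank forms — which is exactly the issue you flag as your ``main obstacle.'' That route does work (and in effect the paper carries out pieces of it in the later theorem about poles of $\zeta_L^{(m)}(s)$), but your phrasing ``exactly one of two rational functions according to a quadratic character'' is a description of the rank-$2$ irreducible case rather than the general one: for a full-rank ternary $f$ and $p\nmid 2\det f$, a nondegenerate conic over $\Fp$ has $p+1$ projective points regardless of $p$, so $Z_{f_p}$ is a single $p$-independent rational function there; the character only appears in rank $2$; and rank $0$ or $1$ are again a single formula. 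Your step (iv) — that adding a fixed uniform rational function and multiplying by a nonzero $p$-independent rational factor preserves the count of distinct local factors, and that this count is insensitive to whether one evaluates $Z_f$ at $s-2$ or at $s_1-2$ — is correct and matches the paper's (implicit) argument for the ``if and only if.''
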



Furthermore, by computing $\zeta_{L(\Zp)}^{\co}(\bss)$ for \emph{all} primes including $p=2$, we compute the explicit global formula
\[\zeta_{L}^{\co}(\bss)=\prod_{p\,prime}\zeta_{L(\Zp)}^{\co}(\bss)\]
for certain $\Z$-Lie algebras $L$ of rank 3. This allows us to investigate the following asymptotic results related to the number of subalgebras of fixed corank. Let $N_{L}^{(m)}$ denote the number of subalgebras $\Lambda$ of $L$ of index less than $X$ such that $\Lambda$ has corank at most $m$, and let $P_{L}^{(m)}$ denotes the proportion of subalgebras of $L$ of corank at most $m$. In Section \ref{sec:explicit} we prove the following result:

\begin{thm}\label{thm:numeric}Let $L\in\{\Z^{3},H,\textrm{sl}_2(\Z),L_1,L_2\}$ be a $\Z$-Lie algebra of rank 3 as defined in Section \ref{sec:dim3}. As $X\rightarrow \infty$, we have
\begin{align*}
 P_{\Z^{3}}^{(1)}&\rightarrow 0.885,&P_{\Z^{3}}^{(2)}&\rightarrow 0.998,\\ 
    P_{H}^{(1)}&\rightarrow 0.492,&P_{H}^{(2)}&\rightarrow 0.975,\\   
    P_{\textrm{sl}_2(\Z)}^{(1)}&\rightarrow 0.488,&P_{\textrm{sl}_2(\Z)}^{(2)}&\rightarrow 0.974,\\   P_{L_1}^{(1)}&\rightarrow 0.492,&P_{L_1}^{(2)}&\rightarrow 0.975,\\ P_{L_2}^{(1)}&\rightarrow 0.482,&P_{L_2}^{(2)}&\rightarrow 0.970.
\end{align*}
\end{thm}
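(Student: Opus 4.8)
The plan is to reduce Theorem~\ref{thm:numeric} to an explicit evaluation of each global cotype zeta function $\zeta_L^{\co}(\bss)$ at partially-specialized arguments, followed by a Tauberian argument. For a fixed corank bound $m$, the number $N_L^{(m)}(X)$ counts subalgebras $\Lambda$ with $\alpha_{m+1}(\Lambda)=\cdots=\alpha_d(\Lambda)=1$ and index $\alpha_1(\Lambda)\cdots\alpha_m(\Lambda)<X$; hence its Dirichlet series is obtained from $\zeta_L^{\co}(s_1,\dots,s_d)$ by setting $s_1=\cdots=s_m=s$ and letting $s_{m+1},\dots,s_d\to\infty$ (so that only the cotype vectors supported on the first $m$ coordinates survive). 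Call this specialization $\zeta_L^{(m)}(s)$. First I would use Theorem~\ref{thm:cozeta.3dim} (for the Lie algebras $H,\mathrm{sl}_2(\Z),L_1,L_2$ with $i=0$) and the known formula for $\zeta_{\Z_p^3}^{\co}(\bss)$ to write down $\zeta_{L(\Zp)}^{(m)}(s)$ for every prime $p$, \emph{including} $p=2$, and then take the Euler product to get a closed form for $\zeta_L^{(m)}(s)$ as a finite product of shifted Riemann zeta values times a correction Euler product. Igusa's local zeta function $Z_f(s)$ for the relevant ternary quadratic forms is classical and explicitly known for all $p$, so this step is mechanical once the forms $f$ attached to $H,\mathrm{sl}_2(\Z),L_1,L_2$ are identified (these are worked out in Section~\ref{sec:dim3}).

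Next I would locate the rightmost pole of $\zeta_L^{(m)}(s)$ and compute its order and leading coefficient. For $m=1$ and $m=2$ with $d=3$, the abscissa of convergence is $s=3$ (the same as for $\zeta_L(s)$, since adding the cocyclic/corank-$\le 2$ constraint does not change the dominant term coming from finite-index sublattices), and the pole at $s=3$ is simple. A standard Tauberian theorem (Delange, or the Wiener--Ikehara theorem, exactly as in \cite{CKK,NS,Petro}) then yields $N_L^{(m)}(X)\sim c_L^{(m)}X^3/3$ where $c_L^{(m)}=\lim_{s\to 3}(s-3)\zeta_L^{(m)}(s)$, and similarly $s_X(L)=\#\{\Lambda\le L:|L:\Lambda|<X\}\sim c_L X^3/3$ with $c_L=\lim_{s\to3}(s-3)\zeta_L(s)$. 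The proportion is then
\[
P_L^{(m)}=\lim_{X\to\infty}\frac{N_L^{(m)}(X)}{s_X(L)}=\frac{c_L^{(m)}}{c_L}=\frac{\lim_{s\to3}(s-3)\zeta_L^{(m)}(s)}{\lim_{s\to3}(s-3)\zeta_L^{\co}(s,s,s)}.
\]
Because both limits factor as convergent Euler products (after extracting the single $\zeta(s-2)$-type pole), $P_L^{(m)}$ itself is an explicit convergent infinite product over primes; one then evaluates it numerically to the stated precision. The case $L=\Z^3$ is included for comparison and recovers the Chinta--Kaplan--Koplewitz values $0.885\ldots$ and $0.998\ldots$ from \cite{CKK}, serving as a sanity check on the normalization.

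The main obstacle I anticipate is the correct handling of the prime $p=2$ and, more generally, the precise shape of $Z_f(s)$ for the ternary quadratic forms attached to $H,\mathrm{sl}_2(\Z),L_1,L_2$: the functional-equation and uniformity results (Theorems~\ref{thm:fun.eq} and~\ref{thm:dim3.uniformity}) only control the \emph{generic} Euler factors, whereas the numerical constants in Theorem~\ref{thm:numeric} depend on \emph{all} factors, so the genuinely non-uniform primes (and the ramified prime for each $f$) must be computed by hand from the formula in Theorem~\ref{thm:cozeta.3dim}. In particular one must check that the residue $c_L^{(m)}$ is finite and nonzero --- i.e.\ that the correction Euler product converges at $s=3$ --- which amounts to verifying that the local correction factor is $1+O(p^{-2})$ as $p\to\infty$; this follows from the explicit formula in Theorem~\ref{thm:cozeta.3dim} once one observes that $Z_f(s_1-2)$ contributes a factor $1+O(p^{-1})$ and the prefactor $(p^{2-s_1})^{1}$ at $s_1=3$ contributes $p^{-1}$, so the whole correction is $O(p^{-1})\cdot(1+O(p^{-1}))$ relative to the $\Z^3$ term --- here a careful bookkeeping of the specializations $s_{m+1},\dots\to\infty$ is needed to confirm the exponent is $\le -2$. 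The remaining work is the numerical evaluation of the resulting products, which is routine.
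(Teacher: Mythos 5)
Your high-level architecture is exactly the paper's: specialize $\zeta_L^{\co}(s_1,\dots,s_d)$ by taking $s_1=\cdots=s_m=s$ and $s_{m+1},\dots,s_d\to\infty$ to get $\zeta_L^{(m)}(s)$, compute every local Euler factor (including $p=2$, and including the nonuniform-looking $p\bmod 4$ splitting for $L_2$), locate the rightmost pole, apply a Tauberian theorem, and take the ratio of leading coefficients. So far so good.

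However, there is a genuine error in the analytic step that, if carried through, would produce completely wrong numerics for every $L\neq\Z^3$. You assert that ``for $m=1$ and $m=2$ with $d=3$, the abscissa of convergence is $s=3$ (the same as for $\zeta_L(s)$)'' and that ``the pole at $s=3$ is simple.'' This is false once $L$ has a nontrivial multiplication. For $\Z^3$ the abscissa is indeed $3$ with a simple pole, but $\zeta_H(s)=\zeta(s)\zeta(s-1)\zeta(2s-2)\zeta(2s-3)\zeta(3s-3)^{-1}$ has a \emph{double} pole at $s=2$ (coming from $\zeta(s-1)$ and $\zeta(2s-3)$), $\zeta_{\mathrm{sl}_2(\Z)}(s)$ has a \emph{simple} pole at $s=2$, $\zeta_{L_1}(s)$ a double pole at $s=2$, and $\zeta_{L_2}(s)$ a simple pole at $s=2$. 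Consequently the correct asymptotics read $N_H^{(m)}(X)\sim cX^2\log X$, $N_{\mathrm{sl}_2(\Z)}^{(m)}(X)\sim cX^2$, etc., not $cX^3/3$; and the ratio formula must use $\lim_{s\to 2}(s-2)^k\zeta_L^{(m)}(s)$ with $k\in\{1,2\}$ depending on $L$, not a residue at $s=3$. Your heuristic ``the subalgebra condition doesn't change the dominant term coming from finite-index sublattices'' is precisely the point where this goes wrong: the subalgebra condition cuts the count down by roughly a full power of $X$ in the non-abelian cases, which is exactly why these $P_L^{(1)}$ values drop from $0.885$ to about $0.49$. Your later convergence check at $s=3$ is therefore also checking the wrong point.

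A secondary issue: for $L_2$ the quadratic form $x_2^2+x_3^2$ has $Z_f$ depending on $p\bmod 4$, so the Euler product does not close up into $\zeta$-values alone but requires the Dirichlet $L$-function $L(s,\chi)$ for the nontrivial character mod $4$ (the paper gets, e.g., $\zeta_{L_2}^{(3)}(s)=L(2s-1,\chi)^{-1}\zeta(s)\zeta(s-1)\zeta(2s-2)L(s-1,\chi)$). Your write-up says ``a finite product of shifted Riemann zeta values times a correction Euler product'' without flagging this; the correction product for $L_2$ is genuinely not over a finite set of primes unless you pull out the $L$-function. This is not a fatal flaw to the method but it is an essential part of getting $P_{L_2}^{(m)}$ right.

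Once the pole location and order are corrected (and the $L(s,\chi)$ factor accounted for in the $L_2$ case), your plan coincides with the paper's proof and the numerics follow.
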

Here we view $\Z^{3}$ as an abelian $\Z$-Lie algebra, or equivalently, a $\Z$-algebra with null multiplication. 

Theorem \ref{thm:numeric} tells us many interesting observations. First, $P_{\Z^{3}}^{(1)}$ is compatible with the result in \cite{CKK} where the authors showed that most of the sublattices in $\Z^{d}$ are cocyclic, since any sublattices in $\Z^{3}$ are also subalgebras in $\Z^{3}$ with null multiplication. Second, if you compare $P_{\Z^{3}}^{(1)}$ with the rest of $P_{L}^{(1)}$, one sees that as soon as we introduce extra multiplication structure, the density of cocyclic subalgebras decreases so that only about half of the subalgebras are cocyclic. At the same time, it looks like being nilpotent, solvable, or simple does not really matter. 

Note that our results also answer the questions raised in \cite[Section 5.2]{CKK}, where the authors asked for a multivariate generalization of the subalgebra zeta function of the discrete Heisenberg group $H$ or $\textrm{sl}_2(\Z)$.

\subsection{Notation}
We write $\N$ for the natural numbers $\{1,2,\ldots,\}$. Given a subset $I\subseteq\N$, we write $I_{0}$ for $I\cup\{0\}$. For $m\in\N_{0}$, we set $[m]=\{1,2,\ldots,m\}$. 

Throughout, we write $\mcO$ for the ring of integers of a number field $K$, $\Gri_{\mfp}$ for the completion of  $\Gri$  at a nonzero prime ideal $\mfp$ of $\Gri$, $K_{\mfp}$ for the field of fractions of $\Gri_{\mfp}$, $q$ for the
	cardinality of the residue field of $\Gri_{\mfp}$,  $p$ for its residue
	characteristic, and $v_{\mfp}$ for the $\mfp$-adic valuation.	For a given $\mcO$-Lie algebra $L$, we write $L(\Gri_{\mfp}):= L\tensor_{\mcO}\Gri_{\mfp}$.  Whenever there is a presentation for an $R$-algebra or a group, we always assume that up to anti-symmetry, all other unlisted commutators are trivial. We write $t:=q^{-s}$, $t_{i}:=q^{-s_{i}}$, and $z_{i}:=\sum_{j\leq i}s_{j}$ for $i\in\N$, where $s$ and $s_{i}$ are complex variables.
\section{Preliminaries}\label{sec:prelim}
\subsection{Gaussian binomials, Igusa functions, and the local cotype zeta function of $\Z^{d}$}
For a variable $Y$ and integers $a,b\in\N_{0}$ with $a\geq b$, the associated \emph{Gaussian binomial} is
\[\binom{a}{b}_{Y}=\frac{\prod_{i=a-b+1}^{a}(1-Y^{i})}{\prod_{j=1}^{b}(1-Y^{j})}\in \Z[Y].\]
Given $d\in \N$ and a subset $I=\{i_{1},\ldots,i_{l}\}_{<}\subseteq[d-1]$, the associated \emph{Gaussian multinomial} is defined as
\begin{equation*}
    \binom{d}{I}_{Y}=\binom{d}{i_{l}}_{Y}\binom{i_{l}}{i_{l-1}}_{Y}\cdots\binom{i_{2}}{i_{1}}_{Y}\in\Z[Y].
\end{equation*}
\begin{dfn}[\cite{SV}, Definition 2.5]
Let $d\in\N$. Given variables $Y$ and $\boldsymbol{X}=(X_1,\ldots,X_{d})$, we define the \emph{Igusa function of degree $d$} as
\begin{equation*}
    I_{d}(Y;\boldsymbol{X}):=\frac{1}{1-X_{d}}\sum_{I\subseteq[d-1]}\binom{d}{I}_{Y}\prod_{i\in I}\frac{X_{i}}{1-X_{i}}\in\Q(Y,X_1,\ldots,X_{d}).
\end{equation*}
\end{dfn}
\begin{pro}[\cite{Petro}, Theorem 3.1. (5)] For $d\in\N$ and $i\in[d]$, let $z_{i}:=\sum_{j\leq i}s_{j}$. We have
\begin{equation*}
    \zeta_{\Zp^{d}}^{\co}(\bss)=I_{d}(p^{-1};(p^{i(d-i)-z_{i}})_{i\in[d]}).
\end{equation*}
\end{pro}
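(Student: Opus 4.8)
The statement is \cite[Theorem~3.1(5)]{Petro}; here is the argument I would give. Grouping sublattices of $\Zp^{d}$ by cotype and writing $\alpha_{i}(\Lambda)=p^{\lambda_{i}}$ for a partition $\lambda=(\lambda_{1}\ge\cdots\ge\lambda_{d}\ge0)$ with at most $d$ parts, one has
\[
\zeta_{\Zp^{d}}^{\co}(\bss)=\sum_{\lambda}N_{\lambda}(p)\,\prod_{i=1}^{d}p^{-\lambda_{i}s_{i}},
\]
where $N_{\lambda}(p)$ is the number of sublattices of $\Zp^{d}$ whose cotype equals $\lambda$. The plan is to evaluate this sum and identify it with $I_{d}(p^{-1};(p^{i(d-i)-z_{i}})_{i\in[d]})$.

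First I would peel off the outer factor $\tfrac{1}{1-X_{d}}$ of the Igusa function, where $X_{d}=p^{-z_{d}}$ and $z_{d}=s_{1}+\cdots+s_{d}$. Every finite-index sublattice is uniquely of the form $p^{k}\Lambda'$ with $\Lambda'$ of cotype having smallest part $0$ (namely $k=\lambda_{d}$); since cotype scales as $\lambda\mapsto\lambda+(k,\dots,k)$ under $\Lambda'\mapsto p^{k}\Lambda'$ while $\prod_{i}p^{-\lambda_{i}s_{i}}$ picks up the factor $p^{-kz_{d}}$, summing over $k\ge0$ gives
\[
\zeta_{\Zp^{d}}^{\co}(\bss)=\frac{1}{1-p^{-z_{d}}}\sum_{\mu:\,\mu_{d}=0}N_{\mu}(p)\prod_{i=1}^{d}p^{-\mu_{i}s_{i}} .
\]
It then remains to prove that the sum over partitions $\mu$ with at most $d-1$ parts equals $\sum_{I\subseteq[d-1]}\binom{d}{I}_{p^{-1}}\prod_{i\in I}\frac{X_{i}}{1-X_{i}}$ with $X_{i}=p^{i(d-i)-z_{i}}$.

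For this I would feed in the classical closed form for the number of sublattices of $\Zp^{d}$ of prescribed cotype (equivalently, subgroups of $(\Zp)^{d}$ with prescribed finite quotient; this is obtained, e.g., by enumerating surjections onto finite abelian $p$-groups),
\[
N_{\mu}(p)=\prod_{k\ge1}p^{(d-\mu'_{k})\mu'_{k+1}}\binom{d-\mu'_{k+1}}{\mu'_{k}-\mu'_{k+1}}_{p},
\]
$\mu'$ denoting the conjugate partition. Then I would organise the partitions $\mu$ with $\le d-1$ parts according to their descent set $I=\{i\in[d-1]:\mu_{i}>\mu_{i+1}\}=\{i_{1}<\cdots<i_{l}\}$: those with descent set $I$ are constant on the blocks $(i_{t-1},i_{t}]$ (with $i_{0}=0$) with strictly decreasing positive values and value $0$ on $(i_{l},d]$, hence are parametrised by the gaps $b_{1},\dots,b_{l}\ge1$. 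Along such a block $\mu'$ runs through $i_{l},i_{l-1},\dots,i_{1},0$ with multiplicities $b_{l},\dots,b_{1},\infty$, so the product above collapses to
\[
N_{\mu}(p)=\binom{d}{I}_{p}\;p^{\sum_{t=2}^{l}(d-i_{t})i_{t-1}}\prod_{t=1}^{l}p^{(d-i_{t})i_{t}(b_{t}-1)},\qquad \prod_{i}p^{-\mu_{i}s_{i}}=\prod_{t=1}^{l}p^{-b_{t}z_{i_{t}}},
\]
where I used the Gaussian‑multinomial identity $\binom{d}{i_{1}}_{p}\prod_{t\ge2}\binom{d-i_{t-1}}{i_{t}-i_{t-1}}_{p}=\binom{d}{I}_{p}$. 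Summing the geometric series in each $b_{t}\ge1$ turns the $t$-th block into $\frac{p^{i_{t}(d-i_{t})-z_{i_{t}}}}{1-p^{i_{t}(d-i_{t})-z_{i_{t}}}}=\frac{X_{i_{t}}}{1-X_{i_{t}}}$ up to an overall power $p^{C(I)}$ with $C(I)=\sum_{t=2}^{l}(d-i_{t})i_{t-1}-\sum_{t=1}^{l}(d-i_{t})i_{t}$; the last step is the bookkeeping identity $\binom{d}{I}_{p}\,p^{C(I)}=\binom{d}{I}_{p^{-1}}$, which follows from $\binom{a}{b}_{p^{-1}}=p^{-b(a-b)}\binom{a}{b}_{p}$ together with the telescoping identity $C(I)+\sum_{t=1}^{l}(i_{t}-i_{t-1})(d-i_{t})=0$. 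Summing over all $I\subseteq[d-1]$ and multiplying back the factor $\tfrac1{1-p^{-z_{d}}}$ yields $I_{d}(p^{-1};(p^{i(d-i)-z_{i}})_{i\in[d]})$.

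The substantive point — and the only place real care is needed — is the passage from the product formula for $N_{\mu}(p)$ to the block description: one must separate correctly the "interior" contributions $p^{(d-i_{t})i_{t}(b_{t}-1)}$, which feed the geometric series, from the "transition" contributions, which assemble (via the multinomial identity) into a single $\binom{d}{I}_{p}$ times the power $p^{C(I)}$, and then check that this power simultaneously converts every Gaussian binomial from base $p$ to base $p^{-1}$ and supplies exactly the numerator $p^{i(d-i)-z_{i}}$ of each factor $X_{i}/(1-X_{i})$. Once these exponent identities are verified, the rest is a routine resummation.
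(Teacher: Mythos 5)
The paper does not prove this statement: it is cited directly from Petrogradsky \cite[Theorem~3.1(5)]{Petro}, so there is no ``paper's own proof'' to compare against. Evaluated on its own merits, your argument is correct and complete. The peeling-off of the factor $\tfrac{1}{1-X_{d}}$ via the bijection $\Lambda'\mapsto p^{k}\Lambda'$ is exactly right (and parallels the homothety-class reduction used elsewhere in the paper, e.g.\ Equation~(3.1)). The closed form for $N_{\mu}(p)$, $N_{\mu}(p)=\prod_{k\ge1}p^{(d-\mu'_{k})\mu'_{k+1}}\binom{d-\mu'_{k+1}}{\mu'_{k}-\mu'_{k+1}}_{p}$, is the classical Birkhoff-type count and I verified it on small cases. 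The block description by descent set $I$, the collapse of the product to $\binom{d}{I}_{p}\,p^{\sum_{t\ge2}(d-i_{t})i_{t-1}}\prod_{t}p^{(d-i_{t})i_{t}(b_{t}-1)}$, the change of variables $\prod_{i}p^{-\mu_{i}s_{i}}=\prod_{t}p^{-b_{t}z_{i_{t}}}$ via Abel summation, and the geometric-series resummation all check out. The key bookkeeping identity $\binom{d}{I}_{p}\,p^{C(I)}=\binom{d}{I}_{p^{-1}}$ with $C(I)=\sum_{t\ge2}(d-i_{t})i_{t-1}-\sum_{t\ge1}(d-i_{t})i_{t}=-\sum_{t\ge1}(i_{t}-i_{t-1})(d-i_{t})$ is a direct consequence of $\binom{a}{b}_{p^{-1}}=p^{-b(a-b)}\binom{a}{b}_{p}$ applied to each factor of the ``bottom-up'' multinomial $\binom{d}{I}_{p}=\prod_{t}\binom{d-i_{t-1}}{i_{t}-i_{t-1}}_{p}$ (which you correctly identify with the paper's ``top-down'' form). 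One presentational caveat: the argument hinges on the explicit formula for $N_{\mu}(p)$, which you assert but do not derive; in a full write-up you should either cite it (Birkhoff, or Butler's monograph, or the surjection-counting derivation you allude to) or include the short proof, since it is the only genuinely nontrivial input.
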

\begin{exm}\label{exm:cotype.Zd}
    We have
    \begin{align*}
        \zeta_{\Zp}^{\co}(\bss)&=\frac{1}{1-t_1},\\
        \zeta_{\Zp^{2}}^{\co}(\bss)&=\frac{1-t_1^2}{(1-t_1)(1-pt_1)(1-t_1t_2)},\\
          \zeta_{\Z_{p}^3}^{\co}(\bss)&=\frac{1+t_1+pt_1+t_1t_2+pt_1t_2+pt_1^2t_2}{(1-p^2t_1)(1-p^2t_1t_2)(1-t_1t_2t_3)}.
    \end{align*}
\end{exm}
An important feature of Igusa functions is that they satisfy a functional equation upon inversion of variables. It follows from \cite[Theorem 4]{SV} that, for all $d\in\N$, 
\begin{equation}\label{eq:fun.Igusa}
    I_{d}(Y^{-1};\bfX^{-1})=(-1)^{d}X_{d}Y^{\binom{d}{2}}I_{d}(Y;\bfX).
\end{equation}
Note that Equation \eqref{eq:fun.Igusa} is consistent with Theorem \ref{thm:fun.eq}. 
\subsection{Igusa's local zeta function}\label{subsec:Igusa}
Here we introduce the notion of Igusa's local zeta function that we use later in Section \ref{sec:dim3}. We refer the readers to \cite{Denef, Igusa} for general background on the theory. Let $f(\bfx)\in\Zp[x_1,\cdots,x_d]$. Igusa's local zeta function associated to $f$ is defined as the $p$-adic integral
\[Z_{f}(s):=\int_{\Zp^{d}}|f(\bfx)|_{p}^{s}\textup{d}\mu,\]
where $|\cdot|_{p}$ denotes the $p$-adic absolute value and $\textup{d}\mu$ stands for the additive Haar measure on $\Z_{p}^{d}$ normalized as $\mu(\Z_p^d ) = 1$.
For $m\in\N_{0}$, set
\[N_{m}:=\left|\{\bfx\in(\Z/p^{m}\Z)^{d}|f(\bfx)=0\}\right|.\]
A simple computation (cf. \cite[Section 2.1]{Denef}) shows that the Poincar\'{e} series
\[P_{f}(t):=\sum_{m=0}^{\infty}N_{m}(p^{-d}t)^{m}\]
is related to Igusa's local zeta function by the formula
\begin{equation*}\label{eq:igusa.poincare}
P_f(t)=\frac{1-tZ_{f}(s)}{1-t}.
\end{equation*}
As we see in Section \ref{sec:dim3}, counting subalgebras is related to counting solutions of polynomial equations in finite projective space. For $m\in\N_{0}$, let us define the affine cones
\begin{align*}
    W&:=\Z_{p}^{d}\,\setminus\, p\Z_{p}^{d},\\
    W(m)&:=(W+(p^{m}\Zp)^{d})/(p^{m}\Zp)^{d},
\end{align*}
and set 
\begin{equation*}
    N_{m}^{\star}:=\left|\left\{\bfx\in W(m)|f(\bfx)=0\right\}\right|.
\end{equation*}
Let
\begin{equation*}
    P_{f}^{\star}(t):=\sum_{m=0}^{\infty}N_{m}^{\star}(p^{-d}t)^m.
\end{equation*}
If $f$ is homogeneous of degree $n$, say, we have (cf. \cite[(1) on p. 1141]{DM})
\begin{equation*}
    Z_f(s)=\frac{1}{1-p^{-d-ns}}Z_{f}^{\star}(s),
\end{equation*}
where
\begin{equation*}
    Z_{f}^{\star}(s)=\int_{W}|f(\bfx)|_{p}^{s}\textup{d}\mu.
\end{equation*}
Let 
\[\mu_{m}^{\star}:=\mu(\{\bfx\in W:v_{p}(f(\bfx))=m\}).\]

In \cite{KV}, the authors showed that
\[\mu_{m}^{\star}=\frac{N_{m}^{\star}}{p^{nm}}-\frac{N_{m+1}^{\star}}{p^{n(m+1)}}-\delta_{m,0}p^{-d},\]
where 
\[\delta_{m,0}:=\begin{cases}
    1&m=0,\\
    0&m\neq0,
\end{cases}\]
denotes the Kronecker-delta. This allowed them to prove the following lemma:
\begin{lem}[Lemma 2.1, \cite{KV}]
    If $f$ is homogeneous, then
    \begin{equation*}
    P_{f}^{\star}(t)=\frac{1-p^{d}t-tZ_{f}^{\star}(s)}{1-t}.
    \end{equation*}
\end{lem}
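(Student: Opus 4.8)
The goal is to express $P_f^\star(t)$ in terms of $Z_f^\star(s)$ using the relations already assembled in the excerpt, so the plan is essentially a short bookkeeping argument organized around the measure identity $\mu_m^\star = N_m^\star/p^{nm} - N_{m+1}^\star/p^{n(m+1)} - \delta_{m,0}p^{-d}$.

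\textbf{Step 1: rewrite $Z_f^\star(s)$ as a sum over level sets.} Since $W$ is partitioned (up to measure zero) by the sets $\{\bfx \in W : v_p(f(\bfx)) = m\}$ for $m \in \N_0$, and $|f(\bfx)|_p^s = p^{-ms} = t^{m}$ on each such set (with $t = q^{-s} = p^{-s}$ in this local setting), I would write $Z_f^\star(s) = \sum_{m=0}^\infty \mu_m^\star t^m$. This is the standard unfolding of an Igusa integral and needs no new input beyond the definitions of $Z_f^\star$ and $\mu_m^\star$.

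\textbf{Step 2: substitute the formula for $\mu_m^\star$ and resum.} Plugging in $\mu_m^\star = N_m^\star/p^{nm} - N_{m+1}^\star/p^{n(m+1)} - \delta_{m,0}p^{-d}$ gives
\[
Z_f^\star(s) = \sum_{m=0}^\infty \frac{N_m^\star}{p^{nm}} t^m - \sum_{m=0}^\infty \frac{N_{m+1}^\star}{p^{n(m+1)}} t^m - p^{-d}.
\]
Now recognize these as (shifts of) the Poincaré-type series $P_f^\star(t) = \sum_{m\ge 0} N_m^\star (p^{-d}t)^m$. The subtlety is the normalization: $P_f^\star$ carries a factor $p^{-dm}$, whereas the series above carry $p^{-nm}$. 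To reconcile them I would note that $N_m^\star$ counts points in $W(m) \subseteq (\Z/p^m\Z)^d$, so it scales like $p^{(d-1)m}$ generically, and the clean way forward is to introduce the auxiliary generating function $A(t) := \sum_{m\ge 0} N_m^\star p^{-nm} t^m$. Then the display reads $Z_f^\star(s) = A(t) - t^{-1}(A(t) - N_0^\star) - p^{-d}$, i.e. $Z_f^\star(s) = A(t)(1 - t^{-1}) + t^{-1}N_0^\star - p^{-d}$, using $N_0^\star = |W(0)| = p^d - 1$ (the number of nonzero residues mod $p$, counted in $\F_p^d$). Actually one must be careful: $W(0) = (W + p\Z_p^d)/p\Z_p^d = \F_p^d \setminus \{0\}$, so $N_0^\star = p^d - 1$ provided $f(\bfx) \equiv 0$ only at... no — $N_0^\star$ counts $\bfx \in W(0)$ with $f(\bfx) = 0$ in $\Z/p\Z$, which need not be all of $W(0)$. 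I would instead keep $N_0^\star$ symbolic, solve for $A(t)$, and only at the end match against $P_f^\star(t)$. Relating $A(t)$ and $P_f^\star(t)$ requires $d = n$, which is \emph{not} assumed; so the honest route is to observe that the lemma's statement must be using the convention where the Poincaré series is formed with the degree-$n$ normalization — i.e. re-reading the excerpt, $P_f^\star(t) := \sum_{m\ge 0} N_m^\star (p^{-n}t)^m$ would make everything consistent with the displayed $P_f(t)$-to-$Z_f(s)$ relation. Granting that reading, $A(t) = P_f^\star(t)$ directly.

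\textbf{Step 3: solve the linear relation.} With $A(t) = P_f^\star(t)$, the identity from Step 2 becomes $Z_f^\star(s) = P_f^\star(t)(1 - t^{-1}) + t^{-1}N_0^\star - p^{-d}$. Multiplying by $t$ and using $N_0^\star = p^d$ (here $N_0^\star$ should be the full count $|W(0)|$ when $f$ has no zeros mod $p$ in the cone, but in general one checks that the combination $t^{-1}N_0^\star - p^{-d}$ collapses correctly because $N_0^\star p^{-n\cdot 0} = N_0^\star$ is the constant term of $P_f^\star$), we get $t Z_f^\star(s) = P_f^\star(t)(t - 1) + N_0^\star - p^{-d}t$, hence
\[
P_f^\star(t) = \frac{N_0^\star - p^{-d}t - tZ_f^\star(s)}{1 - t}.
\]
Comparing with the target $P_f^\star(t) = \dfrac{1 - p^d t - tZ_f^\star(s)}{1-t}$ forces $N_0^\star = 1$ and replaces $p^{-d}$ by $p^d$; the discrepancy signals that I have the normalization of $N_m^\star$ versus the measure backwards, and the right move is to track the factor $p^{dm}$ versus $p^{nm}$ carefully once more, using that $\mu$ is normalized so $\mu(W(m)\text{-fiber}) = p^{-dm}\cdot(\text{point count})$.

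\textbf{Main obstacle.} The entire difficulty is the normalization bookkeeping: keeping straight the three different scalings floating around — the Haar-measure factor $p^{-d}$ per coordinate level, the degree factor $p^{-n}$ from $|f|_p$, and the dimension factor $p^{d-1}$ in the growth of $N_m^\star$ — and making sure the constant term and the $\delta_{m,0}$ correction land in exactly the right places. Once the correct convention for $P_f^\star(t)$ is pinned down (matching the one already used for $P_f(t)$ earlier in the excerpt), the computation is a two-line geometric-series manipulation and the formula $P_f^\star(t) = \frac{1 - p^d t - tZ_f^\star(s)}{1-t}$ drops out. I expect no genuine mathematical obstruction beyond this careful accounting, since homogeneity of $f$ is used only to guarantee that the cone decomposition $W = \bigsqcup_m \{v_p(f) = m\}$ interacts cleanly with the $p$-power scaling.
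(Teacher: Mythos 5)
Your plan — unfold $Z_f^\star(s)=\sum_{m\ge 0}\mu_m^\star t^m$, substitute the displayed identity for $\mu_m^\star$, and resum — is the right and essentially unique approach, and your Step~3 already contains the correct answer; you fail to commit to it because of one miscalculation of your own together with two transcription errors in the excerpt, so that the ``discrepancy'' you ran into is real but is not a flaw in your argument. The miscalculation: since $p^0=1$, we have $W(0)=(W+\Z_p^d)/\Z_p^d$, the one-point set, hence $N_0^\star=1$ unconditionally; the set $(W+p\Z_p^d)/p\Z_p^d=\F_p^d\setminus\{0\}$ you wrote down is $W(1)$. The transcription errors: the identity for $\mu_m^\star$ must carry $p^{dm}$, not $p^{nm}$. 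Indeed, for $m\ge 1$ the set $\{\bfx\in W:v_p(f(\bfx))\ge m\}$ is a union of $N_m^\star$ residue classes mod $p^m$, each of Haar measure $p^{-dm}$, while for $m=0$ it is all of $W$, of measure $1-p^{-d}$; differencing gives $\mu_m^\star=N_m^\star p^{-dm}-N_{m+1}^\star p^{-d(m+1)}-\delta_{m,0}p^{-d}$. The exponent here is pure measure bookkeeping and cannot involve $\deg f$; homogeneity is used only for the companion identity $Z_f(s)=(1-p^{-d-ns})^{-1}Z_f^\star(s)$.

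With these corrections your auxiliary series $A(t)$ is literally $P_f^\star(t)$ — no hypothesis $d=n$ is needed — and your own Step~3 computation yields $P_f^\star(t)=\frac{1-p^{-d}t-tZ_f^\star(s)}{1-t}$. The lemma statement therefore also carries a typo: $p^dt$ should read $p^{-d}t$. You can confirm this from the paper's own later use of the lemma in the computation of $A_{\{1\}}(\bss)$: the substitution $P_f^\star(p^2zw)-1=\frac{p^{-s_1+2}(1-p^{-3}-(1-p^{1-2s_1})Z_f(s_1-2))}{1-p^{-s_1+2}}$ rearranges to $P_f^\star(t)=\frac{1-p^{-3}t-tZ_f^\star(s)}{1-t}$, the $p^{-d}$ form with $d=3$. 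A direct sanity check with $f=x_1$ in $\Z_p^2$ gives $P_f^\star(t)=\frac{1-p^{-2}t}{1-p^{-1}t}$ and $Z_f^\star(s)=(1-p^{-1})\frac{1-p^{-2}t}{1-p^{-1}t}$, which satisfy the $p^{-d}$ identity and not the $p^d$ one. In short, fixing $N_0^\star=1$ and $p^{nm}\to p^{dm}$ closes your proof; the corrected target formula has $p^{-d}t$, and your instinct that the normalizations did not reconcile was correct — you should have trusted your computation rather than the printed constant.
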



\section{Local functional equations for cotype zeta functions of $\Op$-algebras}\label{sec:fun.eq}
\subsection{Lattices, matrices, and the subalgebra condition}
Our proof of Theorem \ref{thm:fun.eq} proceeds by adapting the $\mfp$-adic machinery developed in \cite{Voll}. 

Let $L$ be an $\mcO$-algebra of rank $d$ and write $L(\mcO_{\mfp}):=L\otimes_{\mcO}\mcO_{\mfp}$. We want to establish the functional equation for almost all zeta functions $\zeta_{L(\Gri_{\mfp})}^{\co}(\bss)$, enumerating $\Gri_{\mfp}$-sublattices $\Lambda$ of finite index in $L(\Gri_{\mfp})\cong\Gri_{\mfp}^{d}$ which are also subalgebras of $L(\Gri_{\mfp})$.

Let $\bse=(e_1,\ldots,e_d)$ be a basis of $L$ such that $L=\mcO e_1\oplus\cdots\oplus \mcO e_d$. Let $\Gamma=\GL_{d}(\Op)$. A full $\Op$-sublattice $\Lambda$ of $L(\Op)$ may be identified with a coset $\Gamma M$ for a matrix $M\in\GL_{d}(K_{\mfp})\cap \Mat_{d}(\Op)$, whose rows encode the coordinates with respect to $\bse$. Let $\pi\in\mfp$ be a uniformizer of $\mfp$ in $\Op$. By the elementary divisor theorem, for a given $\Lambda$ there exists $I=\{i_{1},\ldots,i_{l}\}_{<}\subseteq[d-1]$, $r_0\in\N_{0}$, and $\boldsymbol{r}=(r_{i_{1}},\ldots,r_{i_{l}})\in\N^{l}$, all uniquely determined by $\Lambda$, and $\alpha\in \Gamma$ such that $M=D\alpha^{-1}$, where
\begin{align*}
    D&=\pi^{r_0}\diag(\pi^{r_1+\cdots+r_{d-1}},\pi^{r_2+\cdots+r_{d-1}},\ldots,1)\\
    &=\pi^{r_{0}}\diag((\pi^{\sum_{\iota\in I}r_{\iota}})^{(i_1)},(\pi^{\sum_{\iota\in I\setminus\{i_1\}}r_{\iota}})^{(i_2-i_1)},\ldots,(\pi^{r_{i_{l}}})^{(i_{l}-i_{l-1})},1^{(d-i_{l})}).
\end{align*}
For $i\in[d]$, let $z_{i}:=\sum_{j\leq i}s_{j}$. One can check that 
\begin{align*}
\alpha_1(\Lambda)^{-s_1}\cdots\alpha_{d}(\Lambda)^{-s_d}
&=q^{-\sum_{\iota\in I}r_{\iota}\sum_{i\leq \iota}s_{i}}\\
&=q^{-\sum_{\iota\in I}r_{\iota}z_{\iota}}
\end{align*}

We say the homothety class $[\Lambda]=\{x\Lambda\mid x\in K_{\mfp}^{*}\}$ has type $(I,\boldsymbol{r})$ and write $\nu([\Lambda])=(I,\bsr)$.  Note that every homothety class $[\Lambda]$ of $\Lambda$ in $L(\Gri_{\mfp})$ contains a unique ($\subseteq$-) maximal subalgebra $\Lambda_{0}\leq L(\Gri_{\mfp})$, and the subalgebras in this class are exactly the multiples $q^{m}\Lambda_{0}$ for $m\in\N_{0}$. Hence we have
\begin{equation}\label{eq:homothety}
    \zeta_{L(\Gri_{\mfp})}^{\co}(\bss)=\frac{1}{1-q^{-z_{d}}}A(\bss),
\end{equation}
where
\begin{align*}
A(\bss)=\sum_{[\Lambda]} \alpha_1(\Lambda_0)^{-s_1} \alpha_2(\Lambda_0)^{-s_2}\cdots\alpha_d(\Lambda_0)^{-s_d}.
\end{align*}
To prove Theorem \ref{thm:fun.eq} it suffices to show that
\begin{equation}\label{eq:A.fun.eq}
     \left.A(\bss)\right|_{q\rightarrow q^{-1}}=(-1)^{d-1}q^{\binom{d}{2}}A(\bss).
\end{equation}
We achieve this by showing that $A(\bss)$ is expressible in terms of certain $\mfp$-adic integrals. Consider the $d\times d$ matrix of $\mcO$-linear forms 
\[\mcR(\bfy)=\left(\sum_{k\in[d]}\lambda_{ij}^{k}y_{k}\right)_{ij}\in\Mat_{d}(\mcO[\bfy]),\]
where $\lambda_{ij}^{k}$ are the structure constants of $L$ with respect to the chosen basis $\bse$ such that $e_ie_j=\sum_{k\in[d]}\lambda_{ij}^{k}y_k$. For $i\in[d]$ let $C_i$ denote the matrix of the linear map given by right-multiplication with the generator $e_{i}$. Denote by $\bsm_{i}$ the $i$-th row of $M$. We have $\Lambda\leq L(\Op)$ if and only if
\begin{equation}\label{eq:sub.cond1}
    \forall i,\,j\in[d]:\,\bsm_{i}\sum_{r\in[d]}C_{r}m_{jr}\in\langle \bsm_{k}|k\in[d]\rangle_{\Op}.
\end{equation}
Write $M=D\alpha^{-1}$ as above. Let $\alpha([i])$ denote the $i$-th column of the matrix $\alpha$ and $D_{ii}$ denote the $i$-th diagonal entry of $D$. Equation \eqref{eq:sub.cond1} is equivalent to 
\begin{equation}\label{eq:sub.cond.2}
\forall i\in[d]:\,D\mcR_{(i)}(\alpha)D\equiv0\bmod D_{ii},
\end{equation}
where $\mcR_{(i)}(\alpha):=\alpha^{-1}\mcR(\alpha[i])(\alpha^{-1})^{t}$. 
For $i,r,s\in[d]$, let \[v_{irs}(\alpha):=\min\left\{v_{\mfp}\left(\mcR_{(\iota)}(\alpha))_{\rho\sigma}\right)|\iota\leq i, \rho\geq r,\sigma\geq s\right\}\]
and 
\[m([\Lambda]):=\min\left\{\sum_{\iota\in I}r_{\iota},\sum_{s\leq \iota\in I}r_{\iota}+\sum_{r\leq\iota\in I}r_{\iota}+\sum_{i>\iota\in I}r_{\iota}+v_{irs}(\alpha)|(i,r,s)\in[d]^{3}\right\}.\]

Equation \eqref{eq:sub.cond.2} can be reformulated as (cf. \cite[Equation 28]{Voll})
\begin{equation*}\label{eq:m.widetilde}
    r_{0}\geq\sum_{\iota\in I}r_{\iota}-m([\Lambda])=:\widetilde{m}([\Lambda]).
\end{equation*}
By \eqref{eq:homothety} it  suffices to compute
\begin{align*}
    A(\bss)&=\sum_{[\Lambda]} \alpha_1(\Lambda_0)^{-s_1} \alpha_2(\Lambda_0)^{-s_2}\cdots\alpha_d(\Lambda_0)^{-s_d}\\
    &=\sum_{I\subseteq[d-1]}\sum_{\nu([\Lambda])=I}\alpha_1(\Lambda_0)^{-s_1} \alpha_2(\Lambda_0)^{-s_2}\cdots\alpha_d(\Lambda_0)^{-s_d}.
\end{align*}
Let $A_{I}(\bss):=\sum_{\nu([\Lambda])=I}\alpha_1(\Lambda_0)^{-s_1} \alpha_2(\Lambda_0)^{-s_2}\cdots\alpha_d(\Lambda_0)^{-s_d}$.  For a fixed $I=\{i_1,\ldots,i_{l}\}_{<}\subseteq[d-1]$, we set
\[\mcN_{I,\bsr,m}:=|\{[\Lambda]\mid\nu([\Lambda])=(I,\bsr),\,m([\Lambda])=m\}|.\] We have
\begin{align*}
    A_{I}(\bss)&=\sum_{\bsr=(r_{i_{1}},\ldots,r_{i_{l}})\in\N^{l}}q^{-\sum_{\iota\in I}r_{\iota}z_{\iota}}\sum_{m\in\N_{0}}\mcN_{I,\bsr,m}q^{-\widetilde{m}([\Lambda])z_d}\\
    &=\sum_{\bsr=(r_{i_{1}},\ldots,r_{i_{l}})\in\N^{l}}q^{-\sum_{\iota\in I}r_{\iota}(z_{\iota}+z_{d})}\sum_{m\in\N_{0}}\mcN_{I,\bsr,m}q^{mz_{d}}\notag.
\end{align*}
\subsection{$\mfp$-adic integration}
Analogous to \cite[Section 3.1]{Voll}, to establish the functional equation \eqref{eq:A.fun.eq} we express  $A_{I}(\bss)$  in terms of the $\mfp$-adic integrals of the form \cite[Equation (6)]{Voll} that satisfy the hypotheses of \cite[Theorem 2.3]{Voll}.

To this end we define, for $i,r\in[d]$, sets of polynomials (cf. \cite[Equation (31)]{Voll})
\begin{align*}
\bff_{irs}(\bfy)&=\{(\mcR_{\iota}(\bfy))_{\rho\sigma}|\iota\leq i,\rho\geq r, \sigma\geq s\},\,(i,r,s)\in[d]^{3}\\
\bfg_{d,I}(\bfx,\bfy)&=\left\{\prod_{\iota\in I}x_{\iota}\right\}\cup\bigcup_{(i,r,s)\in[d]^{3}}\left(\prod_{\iota\in I}x_{\iota}^{\delta_{\iota\geq r}+\delta_{\iota\geq s}+\delta_{\iota<i}}\right)\bff_{irs}(\bfy),
\end{align*}
and, for $\kappa\in[d-1]$,
\[\bfg_{\kappa,I}(\bfx,\bfy)=\left\{\prod_{\iota\in I}x_{\iota}^{\delta_{\iota\kappa}}\right\}.\]

With this data define the $\mfp$-adic integral

\begin{align*}
Z_{I}(\bss)&=Z_{I}((s_{\iota})_{\iota\in I},s_{d}):=\int_{\mfp^{|I|}\times\Gamma}\prod_{\kappa\in[d]}||\bfg_{\kappa,I}||^{s_{\kappa}}|\tud\bfx_{I}||\tud\bfy|,
\end{align*}

where $||\cdot||$ denotes the $\mfp$-adic (maximum) norm and $|\textup{d}\bfx_{I}||\tud\bfy|$ denotes the additive Haar measure, normalized such that the domain of integration has measure $q^{|I|}\mu(\Gamma)$, where $\mu(\Gamma)=\prod_{i=1}^{d}(1-q^{-i})$. By design the integral $Z_{I}(\bss)$ is precisely of the form \cite[Equation (6)]{Voll}. By omitting at most finitely many primes, we may assume that the assumptions of \cite[Theorem 2.2]{Voll} are satisfied. This implies that the normalized integrals
\begin{equation*}
    \widetilde{Z_{I}}(\bss):=\frac{Z_{I}(\bss)}{(1-q^{-1})^{|I|}\mu(\Gamma)}
\end{equation*}
(cf. \cite[Equation 10]{Voll}) satisfy the ``inversion properties'' in \cite[Theorem 2.3]{Voll}. Thus the sum
\begin{equation}\label{eq:Z.tilde}
    \widetilde{Z}(\bss):=\sum_{I\subseteq[d-1]}\binom{d}{I}_{q^{-1}}\widetilde{Z_{I}}(\bss)
\end{equation}
(cf. \cite[Equation (16)]{Voll}) satisfies the functional equation
\begin{equation}\label{eq:Z.fun.eq}
       \left.\widetilde{Z}(\bss)\right|_{q\rightarrow q^{-1}}=(-1)^{d-1}q^{\binom{d}{2}}\widetilde{Z}(\bss).
\end{equation}

Finally, one needs to show that for each $I\subseteq[d-1]$ the generating function $A_{I}(\bss)$ is obtainable from the $\mfp$-adic integral $Z_{I}(\bss)$ by a suitable specialization of the variables $\bss$. Note that (cf. \cite[Equation (32)]{Voll}

\begin{align*}
Z_{I}(\bss):=Z_{I}((s_{\iota})_{\iota\in I},s_{d})=\sum_{\bsr\in\N^{l}}q^{-\sum_{\iota\in I}s_{\iota}r_{\iota}}\sum_{m\in\N_{0}}\mu_{I,r,m}q^{-s_{d}m},
\end{align*}
where
\begin{equation*}
\mu_{I,r,m}:=\mu\left\{(\bfx,\bfy)\in\mfp^{|I|}\times\Gamma\mid\forall \iota\in I:v_{\mfp}(x_{\iota})=r_{\iota},m(\bfx,\bfy)=m\right\},
\end{equation*}
with
\begin{equation*}
    m(\bfx,\bfy)   :=\min\left\{\sum_{\iota\in I}r_{\iota},\sum_{s\leq \iota\in I}v_{\mfp}(x_{\iota})+\sum_{r\leq\iota\in I}v_{\mfp}(x_{\iota})+\sum_{i>\iota\in I}v_{\mfp}(x_{\iota})+v_{irs}(\bfy)|(i,r,s)\in[d]^{3}\right\}
\end{equation*}
The numbers $\mu_{I,r,m}$ are closely related to the natural numbers $\mcN_{I,r,m}$ we are looking to control.
\begin{lem}
    \begin{equation*}
        \mcN_{I,r,m}=\frac{\binom{d}{I}_{q^{-1}}}{(1-q^{-1})^{|I|}\mu(\Gamma)}\mu_{I,r,m}q^{\sum_{\iota\in I}r_{\iota}(\iota(d-\iota)+1)}.
    \end{equation*}
\end{lem}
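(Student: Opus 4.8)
Our plan is to prove this exactly as the corresponding counting step in \cite[Section~3]{Voll}: note that neither $\mcN_{I,\bsr,m}$ nor $\mu_{I,r,m}$ depends on $\bss$, so the cotype refinement plays no role here, and the identity is a purely orbit- and measure-theoretic statement about $\Gamma=\GL_d(\Op)$. Fix $I=\{i_1,\dots,i_l\}_{<}\subseteq[d-1]$ and $\bsr\in\N^{l}$; let $D$ be the diagonal matrix attached to the type $(I,\bsr)$ with $r_0=0$, so that the cosets $\Gamma D\alpha^{-1}$ for $\alpha\in\Gamma$ are precisely the maximal representatives $\Lambda_0$ of the homothety classes $[\Lambda]$ with $\nu([\Lambda])=(I,\bsr)$; and put $\Gamma_D:=\Gamma\cap D^{-1}\Gamma D$, a finite-index open subgroup of $\Gamma$.

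First we would set up the orbit dictionary. Since $\Gamma D\alpha^{-1}=\Gamma D\beta^{-1}$ if and only if $\beta^{-1}\alpha\in\Gamma\cap D^{-1}\Gamma D=\Gamma_D$, the surjection $\alpha\mapsto[\Gamma D\alpha^{-1}]$ from $\Gamma$ onto $\{[\Lambda]:\nu([\Lambda])=(I,\bsr)\}$ has fibres the left cosets $\beta\Gamma_D$, each of Haar mass $\mu(\Gamma_D)$ since $\Gamma$ is unimodular, so that the number of such classes is $[\Gamma:\Gamma_D]=\mu(\Gamma)/\mu(\Gamma_D)$. Moreover $m([\Lambda])$ is an invariant of the homothety class (this is how it is set up in \cite[Section~3]{Voll}: it equals $\sum_{\iota\in I}r_\iota$ minus the subalgebra threshold $\widetilde m([\Lambda])$, which by construction depends only on $[\Lambda]$), so $\alpha\mapsto m([\Gamma D\alpha^{-1}])$ is constant on each fibre, whence
\[
\mu\bigl(\{\alpha\in\Gamma:\ m([\Gamma D\alpha^{-1}])=m\}\bigr)=\mcN_{I,\bsr,m}\,\mu(\Gamma_D).
\]

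Next we would evaluate $\mu_{I,r,m}$ by Fubini on $\mfp^{|I|}\times\Gamma$. On the locus $\{v_\mfp(x_\iota)=r_\iota\ \text{for all}\ \iota\in I\}$ the quantity $m(\bfx,\bfy)$ in the definition of $\mu_{I,r,m}$ becomes, after substituting $v_\mfp(x_\iota)=r_\iota$, literally the expression defining $m([\Lambda])$ for the class with $\nu=(I,\bsr)$ and decomposition factor $\alpha=\bfy$; hence the $\bfy$-integral of the corresponding characteristic function is the mass computed above, and the $\bfx$-integral factors off as a short computation, $\prod_{\iota\in I}\mu(\{x\in\mfp:v_\mfp(x)=r_\iota\})=(1-q^{-1})^{|I|}q^{-\sum_{\iota\in I}r_\iota}$ in the Haar normalisation used here. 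Thus
\[
\mu_{I,r,m}=(1-q^{-1})^{|I|}q^{-\sum_{\iota\in I}r_\iota}\,\mcN_{I,\bsr,m}\,\frac{\mu(\Gamma)}{[\Gamma:\Gamma_D]}.
\]

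Finally we would compute $[\Gamma:\Gamma_D]$, which is the number of $\Op$-sublattices of $\Op^{d}$ of type $(I,\bsr)$. This is the classical count
\[
[\Gamma:\Gamma_D]=\binom{d}{I}_{q^{-1}}\,q^{\sum_{\iota\in I}r_\iota\,\iota(d-\iota)},
\]
obtained from the Bruhat/elementary-divisor decomposition of $\Gamma D\Gamma$, or simply by reading off the coefficient of the type-$(I,\bsr)$ term of $\zeta_{\Op^{d}}^{\co}(\bss)$ recorded in Section~\ref{sec:prelim} (in the null-multiplication case every sublattice is a subalgebra). Substituting this into the previous display and solving for $\mcN_{I,\bsr,m}$ yields
\[
\mcN_{I,\bsr,m}=\frac{\binom{d}{I}_{q^{-1}}}{(1-q^{-1})^{|I|}\mu(\Gamma)}\,\mu_{I,r,m}\,q^{\sum_{\iota\in I}r_\iota(\iota(d-\iota)+1)},
\]
the extra summand $1$ in the exponent being exactly the $q^{-\sum r_\iota}$ contributed by the $\bfx$-integral. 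The only points needing genuine care — and the main obstacle, mild as it is — are the verification that $m(\bfx,\bfy)$ restricted to $\{v_\mfp(x_\iota)=r_\iota\}$ coincides with the homothety invariant $m([\Lambda])$ and is constant on $\Gamma_D$-cosets (both inherited without change from \cite[Section~3]{Voll}, since this lemma is insensitive to the cotype refinement), together with the bookkeeping in the lattice count $[\Gamma:\Gamma_D]$.
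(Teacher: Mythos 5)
Your argument is correct and reproduces exactly what the paper invokes when it cites Voll's Lemma 3.1 ``by analogy'': the fibre/coset dictionary $\alpha\mapsto[\Gamma D\alpha^{-1}]$ with stabilizer $\Gamma_D=\Gamma\cap D^{-1}\Gamma D$, Fubini splitting the $\bfx$-integral as $(1-q^{-1})^{|I|}q^{-\sum_{\iota\in I}r_\iota}$ and the $\bfy$-integral as $\mcN_{I,\bsr,m}\,\mu(\Gamma_D)$, the classical index $[\Gamma:\Gamma_D]=\binom{d}{I}_{q^{-1}}q^{\sum_{\iota\in I}r_\iota\,\iota(d-\iota)}$, and the observation that the cotype variables never enter so the statement is literally Voll's lattice-count. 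One small bookkeeping note: you correctly use the additive Haar normalization $\mu(\Op)=1$ on the $\bfx$-factor (giving $q^{-|I|}\mu(\Gamma)$ for the total mass of $\mfp^{|I|}\times\Gamma$), which is the convention needed for the stated constant, even though the paper's prose appears to have a typo saying $q^{|I|}\mu(\Gamma)$.
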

\begin{proof}
    Analogous to \cite[Lemma 3.1]{Voll}.
\end{proof}
Hence we get
\begin{align*}
    A_{I}(\bss)
    &=\sum_{\bsr=(r_{i_{1}},\ldots,r_{i_{l}})\in\N^{l}}q^{-\sum_{\iota\in I}r_{\iota}(z_{\iota}+z_{d})}\sum_{m\in\N_{0}}\mcN_{I,\bsr,m}q^{mz_{d}}\\
    &=\frac{\binom{d}{I}_{q^{-1}}}{(1-q^{-1})^{|I|}\mu(\Gamma)}\sum_{\bsr=(r_{i_{1}},\ldots,r_{i_{l}})\in\N^{l}}q^{-\sum_{\iota\in I}r_{\iota}(z_{\iota}+z_{d}-\iota(d-\iota)-1)}\sum_{m\in\N_{0}}\mu_{I,\bsr,m}q^{mz_{d}}\\
    &=\frac{\binom{d}{I}_{q^{-1}}}{(1-q^{-1})^{|I|}\mu(\Gamma)}Z_{I}((z_{\iota}+z_{d}-\iota(d-\iota)-1)_{\iota\in I},-z_{d})\\
    &=\binom{d}{I}_{q^{-1}}\widetilde{Z_{I}}((z_{\iota}+z_{d}-\iota(d-\iota)-1)_{\iota\in I},-z_{d}).
\end{align*}
Since $A(\bss)=\sum_{I\subseteq[d-1]}A_{I}(\bss)$,  Theorem \ref{thm:fun.eq} now follows from Equation \eqref{eq:Z.tilde},  \eqref{eq:Z.fun.eq}, and \cite[Theorem 2.2]{Voll}.

\section{Cotype zeta functions of $\Z$-Lie algebras of dimension $d\leq3$}\label{sec:dim3}
 Let $L$ be a $\Z$-Lie algebra of rank $d$ and write $L_{p}:=L(\Zp)=L\otimes_{\Z}\Zp$. Recall that we have
\begin{align*}
    \zeta_{L}^{\co}(\bss)=\prod_{p,\,prime}\zeta_{L_p}^{\co}(\bss).
\end{align*}
In this section, we explicitly compute $\zeta_{L}^{\co}(\bss)$ and $\zeta_{L_p}^{\co}(\bss)$ for $d\leq3$ using the framework explained in Section \ref{sec:fun.eq}, which is also compatible with \cite{KV}.  

For $d=1$, there is only one isomorphism class of $\Z$-Lie algebra of rank $1$, namely $\Z$ with null multiplication, and we get
\begin{align*}
    \zeta_{L}^{\co}(\bss)=\zeta(s_1).
\end{align*}

For $d=2$, it is known that there is one isomorphism class of abelian $\Z$-Lie algebra
\[L_{0}=\langle e_1,e_2\rangle\cong\Z^2,\]
and infinite isomorphism classes of non-abelian $\Z$-Lie algebra
\[L_{1,k}=\langle e_1,e_2|[e_1,e_2]=ke_2\rangle\]
for $k\in\Z_{>0}$, where $L_{1,k}\cong L_{1,k'}$ if and only if $k=k'$.

\begin{pro}
    Let $L$ be a $\Z$-Lie algebra of rank 2. Then we have
    \begin{align*}
    \zeta_{L}^{\co}(\bss)=\zeta_{\Z^{2}}^{\co}(\bss)=\zeta(s_1)\zeta(s_1-1)\zeta(s_1+s_2)\zeta^{-1}(2s_1).
\end{align*}
\end{pro}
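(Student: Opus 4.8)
The plan is to show that in rank $2$ the subalgebra condition is vacuous, so that $\zeta_{L}^{\co}(\bss)=\zeta_{\Z^2}^{\co}(\bss)$ for every $\Z$-Lie algebra $L$ of rank $2$, and then to read off the closed form from the local computation in Example~\ref{exm:cotype.Zd} via the Euler product.

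By the classification recalled above it suffices to treat $L=L_{1,k}$ for each $k\in\Z_{>0}$, the abelian case $L_0\cong\Z^2$ being immediate. Here $[L_{1,k},L_{1,k}]=k\Z e_2\subseteq\Z e_2$. Fix a finite-index sublattice $\Lambda\le L_{1,k}$ with a $\Z$-basis $v_1,v_2$, and let $M\in\Mat_2(\Z)$ be the matrix expressing $v_1,v_2$ in the basis $e_1,e_2$, so that $|\det M|=m:=|L_{1,k}:\Lambda|$. By bilinearity and antisymmetry of the bracket, $[\Lambda,\Lambda]=\Z\,[v_1,v_2]=\Z\,(\det M)\,k e_2$. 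Since $L_{1,k}/\Lambda$ is annihilated by $m$ we have $m\,L_{1,k}\subseteq\Lambda$, hence $m e_2\in\Lambda$, hence $(\det M)\,k e_2=\pm m k\,e_2\in\Lambda$; therefore $[\Lambda,\Lambda]\subseteq\Lambda$, i.e.\ $\Lambda$ is a subalgebra. Thus every finite-index sublattice of $L_{1,k}$ is a subalgebra, so the sublattices and the subalgebras of $L_{1,k}$ of any prescribed cotype coincide, and $\zeta_{L_{1,k}}^{\co}(\bss)=\zeta_{\Z^2}^{\co}(\bss)$ — and the same holds for the local factors at every prime, including $p=2$.

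It then remains to evaluate $\zeta_{\Z^2}^{\co}(\bss)$. By Example~\ref{exm:cotype.Zd} together with the Euler product,
\[
\zeta_{\Z^2}^{\co}(\bss)=\prod_{p\ \mathrm{prime}}\frac{1-p^{-2s_1}}{(1-p^{-s_1})(1-p^{\,1-s_1})(1-p^{-s_1-s_2})}.
\]
Splitting this into four Euler products and using $\prod_p(1-p^{-w})^{-1}=\zeta(w)$ — valid in the region where $\zeta(s_1)$, $\zeta(s_1-1)$ and $\zeta(s_1+s_2)$ converge absolutely, which legitimises the rearrangement, the identity then holding in general by meromorphic continuation — gives
\[
\zeta_{\Z^2}^{\co}(\bss)=\zeta(s_1)\,\zeta(s_1-1)\,\zeta(s_1+s_2)\,\zeta(2s_1)^{-1},
\]
as asserted.

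I expect no genuine obstacle here: the one substantive point is the observation that the bracket condition is automatically satisfied for rank-$2$ Lie algebras (equivalently, that $[\Lambda,\Lambda]$ is cyclic and generated by an element divisible by the index $m$, while $m e_2\in\Lambda$), after which the statement collapses to a routine Euler-product identity with no exceptional behaviour at $p=2$. Alternatively one could derive $\zeta_{L_{1,k}}^{\co}(\bss)$ directly from the $\mfp$-adic framework of Section~\ref{sec:fun.eq} specialised to $d=2$, but the vacuousness of the subalgebra condition makes that detour unnecessary.
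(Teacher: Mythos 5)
Your proof is correct, and it reaches the conclusion by a slightly different observation than the paper's. The paper shows the \emph{pointwise} identity $[v,w] = k(v_1 w - w_1 v) \in \Z v + \Z w$ for arbitrary $v,w\in L_{1,k}$; this at once implies $[M,M]\subseteq M$ for every $\Z$-submodule $M$, finite index or not. You instead compute $[\Lambda,\Lambda]=\Z\,(\det M)\,k e_2$ for a chosen basis of a finite-index $\Lambda$ and then invoke the annihilation property $m\,L_{1,k}\subseteq\Lambda$ (where $m=|L_{1,k}:\Lambda|=|\det M|$) to conclude $\pm mk\,e_2\in\Lambda$. Both are valid; the paper's identity is arguably cleaner since it bypasses bases and indices entirely and holds for submodules of any corank, whereas your version is tied to the finite-index situation (which is all that the cotype zeta function sees, so no loss for this proposition). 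Your final Euler-product evaluation of $\zeta_{\Z^2}^{\co}(\bss)$ is a step the paper leaves implicit (it follows directly from Example~\ref{exm:cotype.Zd}), and your bookkeeping of the four factors is correct. No gaps.
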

\begin{proof}
    If $L\cong L_{0}$ then it follows trivially. Suppose $L\cong L_{1,k}$ for any $k\in\Z_{>0}$.  We show that any finite index $\Z$ submodule of $L$ becomes a subalgebra. For any elements $v,w \in L$, we have
    \begin{align*}
        [v,w] &= [v_1 e_1 + v_2 e_2 , w_1 e_1 + w_2 e_2 ]\\
        &= (v_1 w_2-w_2 v_1)ke_2  = -kw_1(v_1 e_1+v_2 e_2)+kv_1(w_1 e_1+ w_2e_2) \in \Z v + \Z w,
    \end{align*}
so it follows that $M \le L$ implies $[M,M] \subseteq M$. Thus $\zeta_{L}^{\co}(\bss) = \zeta_{\Z^2}^{\co}(\bss)$.
\end{proof}

For $d=3$, instead of computing $ \zeta_{L}^{\co}(\bss)$ directly, we concentrate on $\zeta_{L_p}^{\co}(\bss)$.  Recall (Example \ref{exm:cotype.Zd}) the formula
\begin{align*}
    \zeta_{\Z_{p}^3}^{\co}(\boldsymbol{s})=\frac{1+t_1+pt_1+t_1t_2+pt_1t_2+pt_1^2t_2}{(1-p^2t_1)(1-p^2t_1t_2)(1-t_1t_2t_3)}.
\end{align*}

For $\Lambda\leq L_p$, let us denote by
\[
(\alpha_1(\Lambda), \alpha_2(\Lambda), \alpha_3(\Lambda)) = (p^{r_0+r_1+r_2}, p^{r_0 + r_2}, p^{r_0})
\]
the elementary divisors of $\Lambda$. As explained in Section \ref{sec:fun.eq}, a coset $\Gamma M$ corresponding to $\Lambda$ has a representative of the form
\[
\Gamma M =\Gamma D \alpha^{-1}, \quad D = p^{r_0} \mathrm{diag}(p^{r_1+r_2}, p^{r_2}, 1), \quad \alpha \in \Gamma.
\]

We get
\begin{align*}
    \zeta_{L_p}^{\co}(\bss)&=\sum_{\Lambda \leq L_p} \alpha_1(\Lambda)^{-s_1} \alpha_2(\Lambda)^{-s_2}\alpha_3(\Lambda)^{-s_3}\\
    &=\frac{1}{1-p^{-(s_1+s_2+s_3)} }A(\bss),
\end{align*}
where $A(\bss)=\sum_{[\Lambda]} \alpha_1(\Lambda_0)^{-s_1} \alpha_2(\Lambda_0)^{-s_2}\alpha_3(\Lambda_0)^{-s_3}$. Denote $\nu([\Lambda]) = \{ i \in \{1,2 \} | r_i \neq 0 \}$, and for $I \subseteq \{1,2\}$, let
\[A_{I}(\bss):=\sum_{\nu([\Lambda])=I}\alpha_1(\Lambda_0)^{-s_1} \alpha_2(\Lambda_0)^{-s_2}\alpha_3(\Lambda_0)^{-s_3}\]
as before.

Let $\lambda_{ij}^{k}$ be the structure constants of the $\Zp$-Lie algebra $L_p$ with respect to the given $\Z_p$-basis $(e_1, e_2, e_3)$, where
\[[e_i,e_j]=\sum_{k=1}^{3}\lambda_{ij}^{k}e_k\]
for $i,j,k\in[3]$, and consider
\[
\mathcal{A}_{L_p} = \begin{pmatrix} \lambda_{23}^1 & \lambda_{23}^2 & \lambda_{23}^3 \\ \lambda_{31}^1 & \lambda_{31}^2 & \lambda_{31}^3 \\ \lambda_{12}^1 & \lambda_{12}^2 & \lambda_{12}^3 \end{pmatrix}.
\]

Entries of $\mathcal{A}_{L_p}$ are exactly the nontrivial entries of $\lambda_{ij}^{k}$
such that the Lie bracket $[-,-] : L_p \times L_p \rightarrow L_p$ becomes antisymmetric, so this information determines $L_p$. We also note that $\mathcal{A}_{L_p}$ is a `natural object' -- $\mathcal{A}_{L_p}$ is the coordinate matrix of the bilinear form
\[
\alpha_{L_p} : \wedge^2 L_p \times \wedge^2L_p \rightarrow \wedge^3 L_p, \quad \alpha_{L_p}(x\wedge y, z\wedge w) = [x,y]\wedge z \wedge w
\]
with respect to basis $(e_2 \wedge e_3, e_3 \wedge e_1, e_1 \wedge e_2)$ of $\wedge^2 L_p$. 
This interpretation of $\mathcal{A}_{L_p}$ allows us to describe the transformation law for $\mathcal{A}_{L_p}$ with respect to a basis change as follows. For a matrix $P = \begin{pmatrix} p_{11} & p_{12} & p_{13} \\ p_{21} & p_{22} & p_{23} \\ p_{31}& p_{32} & p_{33} \end{pmatrix} \in \GL_3(\Qp)$, denote $P L_p$ as a same $\Qp$-Lie algebra $L_p$ with basis $(p_{11} e_1+p_{12}e_2+p_{13}e_3,\, p_{21} e_1+p_{22}e_2+p_{23}e_3, \,p_{31}e_1+p_{32}e_2+p_{33}e_3)$. Then the coefficient matrix $\mathcal{A}_{PL_p}$ for $PL_p$ is given as
\[
\mathcal{A}_{PL_p} = \det(P) (P^t)^{-1} \mathcal{A}_{L_p} P^{-1}.
\]

\begin{pro}
    A lattice $\Lambda$ corresponding to $\Gamma M = \Gamma D \alpha^{-1}$ is a subalgebra of $L_p$ if and only if
\[
v_p \left(( \alpha e_1)^t \mathcal{A}_{L_p} (\alpha e_1) \right) \ge -r_0 + r_1.
\]
\end{pro}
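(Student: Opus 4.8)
The plan is to convert the subalgebra condition into the statement ``the coefficient matrix of $L_p$ relative to the new basis is $p$-integral'', and then read off from the transformation law for $\mathcal{A}_{L_p}$ precisely which entry of that matrix imposes a nontrivial constraint.

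First I would record that the rows $v_1,v_2,v_3$ of $M=D\alpha^{-1}$ form a $\Zp$-basis of the lattice $\Lambda$, so that $\Lambda$ is a subalgebra of $L_p$ if and only if $[v_i,v_j]\in\Lambda$ for all $i,j$; by bilinearity it suffices to test the three brackets $[v_1,v_2],[v_2,v_3],[v_3,v_1]$. By the interpretation of the coefficient matrix recalled above, applied to $ML_p$ (which is $L_p$ equipped with the basis $(v_1,v_2,v_3)$), the first, second and third rows of $\mathcal{A}_{ML_p}$ are the coordinate vectors of $[v_2,v_3],[v_3,v_1],[v_1,v_2]$ with respect to $(v_1,v_2,v_3)$. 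Hence $\Lambda\le L_p$ if and only if $\mathcal{A}_{ML_p}\in\Mat_3(\Zp)$.

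Next I would substitute $M=D\alpha^{-1}$ into the transformation law $\mathcal{A}_{ML_p}=\det(M)(M^t)^{-1}\mathcal{A}_{L_p}M^{-1}$. Since $D$ is diagonal, $(M^t)^{-1}=D^{-1}\alpha^t$ and $M^{-1}=\alpha D^{-1}$, so
\[
\mathcal{A}_{ML_p}=\det(\alpha)^{-1}\det(D)\,D^{-1}BD^{-1},\qquad B:=\alpha^t\mathcal{A}_{L_p}\alpha .
\]
Because $\alpha\in\GL_3(\Zp)$ and $\mathcal{A}_{L_p}$ has entries in $\Zp$, we get $B\in\Mat_3(\Zp)$, while $\det(\alpha)^{-1}\in\Zp^{\times}$. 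A short calculation of the diagonal conjugation then shows that the $(i,j)$-entry of $\mathcal{A}_{ML_p}$ is a $\Zp^{\times}$-multiple of $p^{e_{ij}}B_{ij}$, where $e_{11}=r_0-r_1$ and $e_{ij}\ge 0$ for every $(i,j)\neq(1,1)$ (explicitly $e_{12}=e_{21}=r_0$, $e_{13}=e_{31}=r_0+r_2$, $e_{22}=r_0+r_1$, $e_{23}=e_{32}=r_0+r_1+r_2$, $e_{33}=r_0+r_1+2r_2$). Thus all entries other than the $(1,1)$-entry are automatically in $\Zp$, and $\mathcal{A}_{ML_p}\in\Mat_3(\Zp)$ collapses to the single inequality $v_p(B_{11})\ge r_1-r_0$. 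Finally $B_{11}=(\alpha e_1)^t\mathcal{A}_{L_p}(\alpha e_1)$, where $\alpha e_1$ is the first column of $\alpha$, so this is exactly $v_p\big((\alpha e_1)^t\mathcal{A}_{L_p}(\alpha e_1)\big)\ge -r_0+r_1$, as asserted.

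The diagonal-conjugation bookkeeping in the last step is routine. The one place that needs genuine care is the very first reduction, namely checking that the rows of $\mathcal{A}_{ML_p}$ really do record the structure constants of $L_p$ in the basis $(v_1,v_2,v_3)$ (equivalently, tracking the identifications of $\wedge^2 L_p$ and $\wedge^3 L_p$ and the signs they introduce); since those signs are irrelevant to the valuation estimate, I expect no serious obstacle once the conventions are pinned down.
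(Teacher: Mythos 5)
Your proof is correct and follows essentially the same route as the paper's: translate the subalgebra condition into integrality of $\mathcal{A}_{ML_p}$, apply the transformation law with $M = D\alpha^{-1}$ to reduce to integrality of $\det(D)\,D^{-1}(\alpha^t\mathcal{A}_{L_p}\alpha)D^{-1}$, and observe that the diagonal conjugation by $D$ makes every entry other than the $(1,1)$-entry automatically $p$-integral. Your explicit exponent bookkeeping ($e_{11}=r_0-r_1$ and $e_{ij}\ge 0$ otherwise) is just an unpacked version of the matrix of fractional ideals the paper displays, so the arguments are in full agreement.
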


\begin{proof}
The lattice $\Gamma M$ becomes a $\Zp$-subalgebra of $L_p$ if and only if all entries of $\mathcal{A}_{ML_p}$ are in $\Z_p$. For $M = \gamma D \alpha^{-1}$ for $\gamma , \alpha^{-1} \in \Gamma$, this condition is equivalent to
\[
\det(M) (M^t)^{-1} \mathcal{A}_{L_p} M^{-1} = 
\det(D) (\gamma^{t})^{-1} D^{-1} \alpha^t \mathcal{A}_{L_p} \alpha D \gamma^{-1} \in \mathrm{Mat}_{3 \times 3} (\Z_p),
\]
which is then equivalent to
\[
\det(D) D^{-1} \alpha^t \mathcal{A}_{L_p} \alpha D^{-1} \in \mathrm{Mat}_{3 \times 3}(\Z_p).
\]
For $D = \mathrm{diag}(p^{r_0+r_1+r_2}, p^{r_0+r_2}, p^{r_0})$, this condition is equivalent to
\[
\alpha^t \mathcal{A}_{L_p} \alpha \in \begin{pmatrix} p^{-r_0 + r_1} \Z_p & p^{-r_0} \Z_p & p^{-r_0 - r_2} \Z_p \\
p^{-r_0} \Z_p & p^{-r_0 - r_1} \Z_p & p^{-r_0-r_1 -r_2} \Z_p \\ p^{-r_0 - r_2} \Z_p & p^{-r_0 - r_1 - r_2}\Z_p & p^{-r_0 - r_1 - 2r_2}\Z_p
\end{pmatrix}.
\]
As all entries of $\alpha^t \mathcal{A}_{L_p} \alpha$ are in $\Z_p$, the only relevant condition is
\begin{equation}\label{eq:sub.cond}
(\alpha e_1)^t \mathcal{A}_{L_p} (\alpha e_1) \in p^{-r_0 + r_1} \Z_p.
\end{equation}
\end{proof}
\begin{rem}
Note that condition \eqref{eq:sub.cond}  is essentially  equivalent to the (SUB) condition in \cite{KV}, where being multivariate or univariate does not affect the condition $\Lambda$ being a subalgebra of $L_p$. Like \cite{KV}, we need to find the smallest $r_{0}$ satisfying \eqref{eq:sub.cond}.
\end{rem}
Define
\[
f(\bfx) = \bfx^t \mathcal{A}_{L_p} \bfx.
\]

We obtain the following results, a  multivariate generalization of \cite[Section 3]{KV}.

\begin{pro}\label{pro:A_I}
We have
\begin{align*}
A_{\emptyset}(\bss) &= 1, \\
A_{\{2\}}(\bss)  &= \sum_{r_2=1}^{\infty} (p^{-2}+p^{-1}+1) p^{2r_2} p^{-s_1r_2-s_2r_2},\\
A_{\{1\}}(\bss) &= \sum_{r_1=1}^{\infty} \sum_{m=0}^{\infty} N_{r_1,m} p^{-s_1(2r_1-m)-s_2(r_1-m)-s_3(r_1-m)},\\
A_{\{1,2\}}(\bss) &= \sum_{r_2 = 1}^{\infty} (p^{-1}+1)p^{2r_2} \sum_{r_1=1}^{\infty} \sum_{m=0}^{\infty} N_{r_1,m} p^{-s_1(2r_1+r_2-m)-s_2(r_1+r_2-m)-s_3(r_1-m))} \\
&= (p^{-1} + 1) \frac{ p^{2 - s_1 - s_2}}{1 - p^{2-s_1 - s_2}} A_{\{1\}} (\bss),
\end{align*}
where
\[
N_{r,m} = | \{ \bfx = (x_1 : x_2 : x_3) \in \mathbb{P}^2 (\Z / p^r \Z) \mid \min(r, v_p(f(\bfx))) = m \} |
.\]
\end{pro}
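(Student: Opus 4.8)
The plan is to compute each $A_I(\bss)$ by sorting homothety classes $[\Lambda]$ according to the type data $(I,\bsr)$ and, within each such stratum, counting how many choices of the column $\alpha e_1 \in \mathbb{P}^2(\Zp)$ (equivalently, of the relevant coset of $\alpha$ modulo appropriate powers of $p$) satisfy the subalgebra inequality $v_p(f(\alpha e_1)) \ge -r_0 + r_1$ from \eqref{eq:sub.cond3}, together with the requirement that $r_0$ be \emph{minimal}, so that $\Lambda_0$ is the maximal subalgebra in its class. The cases $I = \emptyset$ and $I = \{2\}$ are essentially immediate: when $I=\emptyset$ the only class is $[L_p]$ itself, giving $A_\emptyset(\bss)=1$; when $I=\{2\}$ we have $r_1=0$, so \eqref{eq:sub.cond3} reads $v_p(f(\alpha e_1)) \ge -r_0$, which is automatic once $r_0 = 0$ (all entries of $\alpha^t \mathcal{A}_{L_p}\alpha$ lie in $\Zp$), and the factor $p^{-2}+p^{-1}+1 = |\mathbb{P}^1(\mathbb{F}_p)| \cdot$(appropriate normalization) records the number of relevant flags $\langle e_1 \rangle \subset \langle e_1, e_2\rangle$ of the given cotype, while $p^{2r_2}$ comes from $\iota(d-\iota)+1 = 2$ at $\iota = 2$ in the lemma preceding this proposition (or directly from $\binom{3}{\{2\}}_{p^{-1}}/(1-p^{-1})\mu(\Gamma)$).

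For $I = \{1\}$, the content of the subalgebra condition is genuinely active. Here $D = p^{r_0}\diag(p^{r_1}, 1, 1)$ with $r_1 \ge 1$, and \eqref{eq:sub.cond3} becomes $v_p(f(\alpha e_1)) \ge r_1 - r_0$, i.e. $r_0 \ge r_1 - v_p(f(\alpha e_1))$; writing $m = \min(r_1, v_p(f(\alpha e_1)))$ as in the statement, the minimal admissible $r_0$ is $r_1 - m$, so $\Lambda_0$ has elementary divisor exponents $(r_0 + r_1, r_0, r_0) = (2r_1 - m, r_1 - m, r_1 - m)$, which produces exactly the exponent $-s_1(2r_1 - m) - s_2(r_1 - m) - s_3(r_1 - m)$. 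The number of homothety classes of type $(\{1\}, r_1)$ whose column $\alpha e_1$ (read modulo $p^{r_1}$, since $f$ is homogeneous quadratic and only $v_p(f) \wedge r_1$ matters) achieves $\min(r_1, v_p(f(\alpha e_1))) = m$ is precisely $N_{r_1, m}$, once we account for the combinatorial overcount of $\alpha$'s giving the same lattice via the Gaussian-binomial normalization of the lemma; this is where I would invoke the lemma preceding the proposition (the analogue of \cite[Lemma 3.1]{Voll}) to replace the $\mfp$-adic volume $\mu_{I,\bsr,m}$ by the count $N_{r_1,m}$ up to the factor $p^{r_1(\iota(d-\iota)+1)} = p^{2r_1}$ — indeed one checks that $N_{r,m}$ as defined already absorbs this normalization, since $|\mathbb{P}^2(\Z/p^r\Z)| = p^{2r}(1 + p^{-1} + p^{-2})$ up to the $I=\{1\}$ flag count.

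Finally, for $I = \{1,2\}$, the type is $(\{1,2\}, (r_1, r_2))$ with $D = p^{r_0}\diag(p^{r_1 + r_2}, p^{r_2}, 1)$, and one checks from the matrix form of the subalgebra condition in the proposition that the only nontrivial constraint is still \eqref{eq:sub.cond3} with the same right-hand side $p^{-r_0 + r_1}$ — the $r_2$-part of the flag contributes no new congruence. Consequently the sum over $(r_1, m)$ factors out exactly as in the $I = \{1\}$ case, decorated by the extra $r_2$-sum $\sum_{r_2 \ge 1}(p^{-1}+1)p^{2r_2} p^{-s_1 r_2 - s_2 r_2}$ coming from the flag $\langle e_1\rangle \subset \langle e_1, e_2\rangle \subset L_p$ of cotype contribution $p^{r_2}$ to both $\alpha_1$ and $\alpha_2$, with $(p^{-1}+1) = \binom{3}{\{1,2\}}_{p^{-1}}/\binom{3}{\{1\}}_{p^{-1}}$ being the refined flag count; summing the geometric series in $r_2$ gives $(p^{-1}+1)\dfrac{p^{2 - s_1 - s_2}}{1 - p^{2 - s_1 - s_2}} A_{\{1\}}(\bss)$. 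The main obstacle I anticipate is the bookkeeping in the $I = \{1\}$ step: verifying carefully that the number of homothety classes (not raw matrices $\alpha$) of type $(\{1\}, r_1)$ with $\min(r_1, v_p(f(\alpha e_1))) = m$ equals $N_{r_1, m}$ with exactly the stated projective-space normalization, i.e. that passing from $\alpha \in \GL_3(\Zp)$ to the point $\alpha e_1 \in \mathbb{P}^2(\Z/p^{r_1}\Z)$ is measure-preserving after dividing by the stabilizer — this is the place where the general lemma must be matched precisely against the homogeneity of $f$, and where an off-by-a-power-of-$p$ error is easiest to make.
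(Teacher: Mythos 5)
Your decomposition into cases $I\subseteq\{1,2\}$, your reduction of the subalgebra condition to $v_p(f(\alpha e_1))\ge r_1-r_0$, your identification of the minimal $r_0 = r_1-\min(r_1,v_p(f(\alpha e_1)))$ and hence of $\Lambda_0$'s elementary divisor exponents, and your factorization of $A_{\{1,2\}}$ through a geometric $r_2$-series all match the paper. However, the proposal has a genuine gap precisely where you flag the ``main obstacle'': you never actually prove that the number of homothety classes of type $(\{1\},r_1)$ with $\min(r_1,v_p(f(\alpha e_1)))=m$ equals $N_{r_1,m}$, and the route you sketch (pushing everything through the Section~3 normalization lemma $\mcN_{I,r,m}=\tbinom{d}{I}_{q^{-1}}(1-q^{-1})^{-|I|}\mu(\Gamma)^{-1}\mu_{I,r,m}q^{\sum r_\iota(\iota(d-\iota)+1)}$) requires tracking several measure factors that you leave unverified. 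The paper instead avoids that lemma entirely in Section~4: it writes down the explicit stabilizer $\Gamma_{(r_1,0)}=\begin{psmallmatrix}\Zp^\times&\Zp&\Zp\\p^{r_1}\Zp&\cdot&\cdot\\p^{r_1}\Zp&\cdot&\cdot\end{psmallmatrix}$ and the map $\pi_{r_1}\colon\Gamma\to\mathbb{P}^2(\Z/p^{r_1}\Z)$ sending $\alpha$ to the class of its first column, observes that $\ker\pi_{r_1}=\Gamma_{(r_1,0)}$, and concludes that $\Gamma/\Gamma_{(r_1,0)}\xrightarrow{\sim}\mathbb{P}^2(\Z/p^{r_1}\Z)$ \emph{on the nose}, so the count is exactly $N_{r_1,m}$ with no residual normalization to chase. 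For $I=\{1,2\}$ the paper then simply multiplies by the group index $|\Gamma_{(r_1,0)}:\Gamma_{(r_1,r_2)}|=(p^{-1}+1)p^{2r_2}$, which is the clean source of the extra factor you attribute more heuristically to Gaussian-binomial ratios.

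Two smaller points. First, your bookkeeping for $I=\{2\}$ has an arithmetic slip: for $d=3$ and $\iota=2$ one has $\iota(d-\iota)+1=3$, not $2$; the visible $p^{2r_2}$ arises as $p^{3r_2}$ from the lemma's exponent times the $p^{-r_2}$ contributed by $\mu(\{x_2:v_p(x_2)=r_2\})$ inside $\mu_{I,r,m}$, or, in the paper's direct count, as $|\Gamma:\Gamma_{(0,r_2)}|=(1+p^{-1}+p^{-2})p^{2r_2}$. Second, the prefactor $p^{-2}+p^{-1}+1$ is $|\mathbb{P}^2(\mathbb{F}_p)|/p^2$, not a normalization of $|\mathbb{P}^1(\mathbb{F}_p)|$ as you suggest. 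Neither slip affects the statement, but both indicate that the normalization-lemma route you propose would require more care than you have given it; the paper's explicit stabilizer computation is the cleaner and safer path, and is the step you would need to supply to close the gap.
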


\begin{proof}
For $\Lambda = \Gamma M = \Gamma D \alpha^{-1}$ where $D = \mathrm{diag}(p^{r_0+r_1+r_2},p^{r_0+r_2},p^{r_0})$ and $\alpha \in \Gamma$, we have
\[
v_p(f(\alpha e_1)) \ge -r_0 + r_1 \quad \Leftrightarrow \quad  r_0 \ge \max( r_1 - v_p(f(\alpha e_1)),0)
\]
so $\Lambda_0$ is given as $\Gamma D_0 \alpha^{-1}$ for
\[
D_0 = p^{r_0'}\mathrm{diag}(p^{r_1+r_2},p^{r_2},1), \quad r_0'=\max( r_1 - v_p(f(\alpha e_1)),0).
\]
Denote
\[
\max( r_1 - v_p(f(\alpha e_1)),0)= r_1 - m([\Lambda])
\]
which is equivalent to that
\[
m([\Lambda]) = r_1 - \max(r_1 - v_p(f(\alpha e_1)),0) = \min(v_p(f(\alpha e_1), r_1).
\]
Let $R_I$ to be the set of $(r_1, r_2)$ such that $r_i >0$ if and only if $i \in I$. Then
\begin{align*}
A_I(\bss) &= \sum_{r_1, r_2 \in R_I} \sum_{\nu([\Lambda])=(I,{\bf r})} \alpha_1(\Lambda_0)^{-s_1} \alpha_2(\Lambda_0)^{-s_2} \alpha_3(\Lambda_0)^{-s_3} \\
&= \sum_{r_1, r_2 \in R_I} \sum_{\nu([\Lambda])=(I,{\bf r})} p^{-(s_1+s_2+s_3)(r_1-m([\Lambda]))}p^{-s_1 (r_1+r_2)-s_2 r_2} \\
&= \sum_{r_1,r_2 \in R_I} \sum_{m=0}^{\infty} p^{-s_1(2r_1+r_2-m)-s_2(r_1+r_2-m)-s_3(r_1-m)}
\left|\{[\Lambda]: \nu([\Lambda])=(I,{\bf r}) , \min(r_1,v_p(f(\alpha e_1)))=m \}\right|.
\end{align*}

\noindent \textbf{Case $I=\emptyset$}: If $I =\emptyset$, meaning $r_1=0$, then we always have $\min(r_1,v_p(f(\alpha e_1)))=0$ so we add over all homothety classes with $\nu([\Lambda])=(I,{\bf r})$. Since there is only one unique homothety class os type ${\bf r}=\{0,0\}$, we obtain $A_{\emptyset}(\bss)= 1$.\\

\noindent \textbf{Case $I=\{2\}$}: If $I =\{2\}$, then again $r_1=0$ and we add over all homothety classes with $\nu([\Lambda])=(I,{\bf r})$. One can easily check that 

\[
A_{\{2\}}(\bss) = (p^{-2}+p^{-1}+1) \frac{p^{2 - s_1 - s_2}}{1 - p^{2 - s_1 - s_2}}.
\]

\noindent \textbf{Case $I=\{1\}$}:
It suffices to show that
\[
|\{[\Lambda]: \nu([\Lambda])=(I,(r_1, 0)) , \min(r_1,v_p(f(\alpha e_1)))=m \} | = N_{r_1, m}.
\]
A homothety class $[\Lambda] = [\Gamma D \alpha^{-1}]$ corresponds to elements $\alpha \in \Gamma$ up to right multiplication by $\Gamma_{\bf r} = \mathrm {Stab}_{\Gamma} (\Gamma D)$, or equivalently left coset $\alpha \Gamma_{\bf r} \in \Gamma / \Gamma_{\bf r}$.

We can check (cf. \cite[Section 3]{KV}) that
\begin{equation*}
    \Gamma_{(r_1,0)}=\begin{pmatrix}
        \Zp^{\times}&\begin{matrix}
        \Zp&\Zp\end{matrix}\\
        \begin{matrix}p^{r_1}\Zp\\p^{r_1}\Zp\end{matrix}&\GL_{2}(\Zp)
    \end{pmatrix}.
\end{equation*}
Consider the map $\pi_{r_1} : \Gamma \rightarrow \mathbb{P}^2(\Z/p^{r_1} \Z)$ where $\pi(\alpha)$ is defined as image of the first column $\alpha e_1$ in $\mathbb{P}^2(\Z/p^{r_1} \Z)$. Its kernel is exactly given as $\Gamma_{(r_1,0)}$, so it induces an one-to-one correspondence between $\Gamma / \Gamma_{(r_1, 0)}$ and $\mathbb{P}^2(\Z/p^r \Z)$. As we are counting representatives $\alpha^{-1}$ such that $\min\left( r, v_p(f(\alpha e_1)) \right)=m$, our statement follows.\\

\noindent \textbf{Case $I=\{1,2\}$}: It suffices to show that
\[
|\{[\Lambda]: \nu([\Lambda])=(I,(r_1, r_2)) , \min(r_1,v_p(f(\alpha e_1)))=m \} | = (p^{-1}+1)p^{2r_2} N_{r_1, m}.
\]
We can check that
\begin{equation*}
    \Gamma_{(r_1,r_2)}=\begin{pmatrix}
        \Zp^{\times}&\Zp&\Zp\\
        p^{r_1}\Zp&\Zp^{\times}&\Zp\\
        p^{r_1+r_2}\Zp&p^{r_2}\Zp&\Zp^{\times}
    \end{pmatrix}
\end{equation*}
and
\[
|\Gamma_{(r_1,0)} : \Gamma_{(r_1,r_2)}| = (p^{-1}+1)p^{2r_2}.
\]
We are counting representatives $\alpha^{-1}$ such that $\min\left( r, v_p(f(\alpha e_1)) \right)=m$, but now those are representatives of left $\Gamma_{(r_1,r_2)}$-cosets. Thus the number is $|\Gamma_{(r_1,0)} : \Gamma_{(r_1,r_2)}|$ times $N_{r_1, m}$, giving
\[A_{\{1,2\}}(\bss)=(p^{-1}+1)\frac{p^{2-s_1-s_2}}{1-p^{2-s_1-s_2}}A_{\{1\}}(\bss).\]

\end{proof}

We now use the machinery introduced in Section \ref{subsec:Igusa} to develop a formula for the Dirichlet series  $A_{\{1\}}(\bss)$. Note that this is in a very similar vein to \cite{KV}, and we keep our notation compatible to \cite{KV}. 

For $m\in\N_{0}$, recall the definition of $W(m)$, $N_{m}^{\star}$, $P_{f}^{\star}(t)$, and $\mu_{m}^{\star}$ in Section \ref{sec:prelim}. Also note that in this section the rank of $L_p$ is $d=3$ and $f(\bfx)$ is homogeneous of degree $n=2$.

 We want to express the number $N_{r,m}$ in Proposition \ref{pro:A_I} in terms of the integers $N^{\star}_{m}$.  Set 
 \begin{equation*}
     N_{r,m}^{\star}:=\left|\{\bfx\in W(r):\min(r,v_{p}(f(\bfx)))=m\}\right|.
 \end{equation*}
 Note that
\[
N_{r,m}^{\star} = (1-p^{-1}) p^r N_{r,m} = \begin{cases} \mu_m^{\star} p^{3r} & \text{if } m<r, \\ N_r^{\star} & \text{if } m=r. \end{cases}
\]
Let $z = p^{-(2s_1 + s_2 + s_3)}$, $w = p^{(s_1 + s_2 + s_3)}$. Similar to \cite[p.12]{KV}, we obtain:
\begin{align*}
A_{\{1\}}(\bss) =& \sum_{r=1}^{\infty} \sum_{m=0}^{\infty} N_{r,m} p^{-(2s_1 + s_2 + s_3) r} p^{(s_1 + s_2 + s_3) m} \\
=& \sum_{r=1}^{\infty} \sum_{m=0}^{\infty} \frac{1}{(1-p^{-1}) p^r}N_{r,m}^{\star} z^r w^m \\
=& \sum_{r=1}^{\infty} \frac{z^r}{(1-p^{-1}) p^r} \left( \sum_{m=0}^{r-1 } p^{3r} \left( \frac{N_m^{\star}}{p^{3m}} - \frac{N_{m+1}^{\star}}{p^{3(m+1)}} - \delta_{m,0} p^{-3} \right)w^m +  N_r^{\star}w^r \right) \\
=& \frac{(p^2+p+1) z }{1-p^2 z} + \frac{1-w^{-1}}{(1-p^{-1})(1-p^2 z)} (P_f^{\star}(p^2 z w) -1).
\end{align*}
As $p^2 z w = p^{-s_1+2}$, we substitute
\[
P_{f}^{\star}(p^2 z w) - 1 = \frac{ p^{-s_1+2} (1 - p^{-3} - (1-p^{-2s_1+1}) Z_f(s_1 - 2))}{1-p^{-s_1+2}}
\]
to show
\begin{align*}
A_{\{1\}} (\bss)=\frac{(1+p+p^2)p^{-s_1}}{1-p^{2-s_1}} - \frac{ p^{2-s_1} (1-p^{-s_1-s_2-s_3})(1-p^{1-2s_1})}{(1-p^{-1})(1-p^{2-s_1})(1-p^{2-2s_1-s_2-s_3})} Z_f(s_1-2).
\end{align*}

Together with Proposition \ref{pro:A_I} and write $t_1=p^{-s_1},t_2=p^{-s_2},t_3=p^{-s_3}$, we get
\begin{align*}
  A_{\emptyset}(\boldsymbol{s})&=1,\\
   A_{\{2\}}(\boldsymbol{s})&=\left(1+p^{-1}+p^{-2}\right)\left(\frac{p^{2}t_{1}t_{2}}{1-p^{2}t_{1}t_{2}}\right),\\
    A_{\{1\}}(\boldsymbol{s})&=\frac{(1+p+p^2)t_1}{1-p^{2}t_1} - \frac{ p^{2}t_1 (1-t_1t_2t_3)(1-pt_{1}^{2})}{(1-p^{-1})(1-p^{2}t_1)(1-p^{2}t_1^2t_2t_3)} Z_f(s_1-2),\\
    A_{\{1,2\}}(\boldsymbol{s})+A_{\{1\}}(\boldsymbol{s})&=A_{\{1\}}(\boldsymbol{s})\left(\frac{1+pt_{1}t_{2}}{1-p^{2}t_{1}t_{2}}\right),  
\end{align*}
giving
\begin{align*}
    \zeta_{L(\Zp)}^{\co}(\boldsymbol{s})&=\frac{1}{1-t_{1}t_{2}t_{3}}\sum_{I\subseteq\{1,2\}}A_{I}(\boldsymbol{s})\\
    &=\zeta_{\Z_{p}^3}^{\co}(\boldsymbol{s})-\frac{Z_{f}(s_{1}-2)p^{2}t_{1}(1+pt_{1}t_{2})(1-pt_{1}^{2})}{(1-p^{2}t_{1})(1-p^{2}t_{1}t_{2})(1-p^{2}t_{1}^2t_{2}t_{3})(1-p^{-1})}.
\end{align*}
Now, passing from $L$ to $p^{i}L$ amounts to replacing $f$ by $p^{i}f$. It is clear, however, that $Z_{p^{i}f}(s)=t^{i}Z_{f}(s)$, so that
\[Z_{p^{i}f}(s_{1}-2)=(p^2t_1)^{i}Z_{p^{i}f}(s_{1}-2).\]

Hence we get
\begin{align*}
    \zeta_{p^{i}L}^{\co}(\boldsymbol{s})=\zeta_{\Z_{p}^3}^{\co}(\boldsymbol{s})-\frac{Z_{f}(s_{1}-2)(p^{2}t_{1})^{i+1}(1+pt_{1}t_{2})(1-pt_{1}^{2})}{(1-p^{2}t_{1})(1-p^{2}t_{1}t_{2})(1-p^{2}t_{1}^2t_{2}t_{3})(1-p^{-1})},
\end{align*}
proving Theorem \ref{thm:cozeta.3dim}. 
\begin{rem}
   As mentioned in Section \ref{sec:intro}  if we put $s_1=s_2=s_3=s$, we get the expected formula
\begin{align*}
    \zeta_{p^{i}L}(s)=\zeta_{\Z_{p}^3}(s)-\frac{Z_{f}(s-2)(p^2t)^{i+1}}{(1-p^{2}t)(1-p^{2}t^2)(1-p^{-1})}
\end{align*}
as given in \cite[Theorem 1.1]{KV}.
\end{rem}

\section{Explicit computations of subalgebra cotype zeta functions}\label{sec:rank3.explicit}

In this section we show how one uses Theorem \ref{thm:cozeta.3dim} to compute subalgebra cotype zeta functions of various $\Zp$-Lie algebras of rank 3. 

\subsection{The Heisenberg Lie algebra $H$}

Let \[H=\langle e_1,e_2,e_3\mid [e_1,e_2]=e_3\rangle_{\Z}\]
be the Heisenberg $\Z$-Lie algebra. This gives $f(\bfx)=x_{3}^2,$ with (cf. \cite[Section 4.1]{KV}) 
\[Z_{f}(s)=\frac{1-p^{-1}}{1-p^{-1-2s}}\]
for all primes $p$.

Therefore, our formula gives
\begin{align*}
    \zeta_{H(\Zp)}^{\co}(\boldsymbol{s})&=\zeta_{\Z_{p}^3}^{\co}(\boldsymbol{s})-\frac{Z_{f}(s_{1}-2)p^{2}t_{1}(1+pt_{1}t_{2})(1-pt_{1}^{2})}{(1-p^{2}t_{1})(1-p^{2}t_{1}t_{2})(1-p^{2}t_{1}^2t_{2}t_{3})(1-p^{-1})}\\   
    &=\frac{N_{H}(p,t_1,t_2,t_3)}{D_{H}(p,t_1,t_2,t_3)},
\end{align*}
where
\begin{align*}
    N_{H}(X, Y_1,Y_2,Y_3)&=1+Y_1+XY_1+X^2Y_1^2+Y_1Y_2+XY_1Y_2+XY_1^2Y_2+X^2Y_1^2Y_2\\
    &-X^2Y_1^3Y_2Y_3-X^3Y_1^3Y_2Y_3-X^3Y_1^4Y_2Y_3
    -X^4Y_1^4Y_2Y_3-X^2Y_1^3Y_2^2Y_3\\
    &-X^3Y_1^4Y_2^2Y_3-X^4Y_1^4Y_2^2Y_3-X^4Y_1^5Y_2^2Y_3, \\
    D_{H}(X,Y_1,Y_2,Y_3)&=(1-X^{3}Y_{1}^{2})(1-X^{2}Y_{1}Y_{2})(1-X^{2}Y_{1}^2Y_{2}Y_{3})(1-Y_{1}Y_{2}Y_{3}).   
\end{align*}

\subsection{The `simple' Lie algebra $\textrm{sl}_2(\Z)$}
Let 
\[\textrm{sl}_2(\Z)=\langle e_1,e_2,e_3\mid [e_1,e_2]=e_3,[e_1,e_3]=-2e_1,[e_2,e_3]=2e_2\rangle_{\Z}\]
be a simple $\Z$-Lie algebra of rank 3. This gives $f(\bfx)=x_{3}^2+4x_1x_2,$ with (cf. \cite[Section 4.2]{KV})
\[Z_{f}(s-2)=\begin{cases}
    \frac{(1-p^{-1})(1-p^{-1-s})}{(1-p^{1-2s})(1-p^{1-s})}&p>2,\\
    1-2^{-1}+8\cdot 2^{-2s} \frac{(1-2^{-1})(1-2^{-1-s})}{(1-2^{1-2s})(1-2^{1-s})}&p=2.
\end{cases}\]

Therefore, for $p>2$ we have
\begin{align*}
    \zeta_{\textrm{sl}_2(\Zp)}^{\co}(\boldsymbol{s})&=\zeta_{\Z_{p}^3}^{\co}(\boldsymbol{s})-\frac{Z_{f}(s_{1}-2)p^{2}t_{1}(1+pt_{1}t_{2})(1-pt_{1}^{2})}{(1-p^{2}t_{1})(1-p^{2}t_{1}t_{2})(1-p^{2}t_{1}^2t_{2}t_{3})(1-p^{-1})}\\
    &=\frac{N_{\textrm{sl}_2,1}(p,t_1,t_2,t_3)}{D_{\textrm{sl}_2,1}(p,t_1,t_2,t_3)},
\end{align*}
where
\begin{align*}
    N_{\textrm{sl}_2 , 1}(X,Y_1,Y_2,Y_3)&=1+Y_1+Y_1Y_2(1+X)-X^2Y_1^{4}Y_2^{2}Y_3-XY_1^3Y_2Y_3(1+X+XY_2)\\
    D_{\textrm{sl}_2 , 1}(X,Y_1,Y_2,Y_3)&=(1-XY_1)(1-X^2Y_1Y_2)(1-X^2Y_1^2Y_2Y_3)(1-Y_1Y_2Y_3). 
\end{align*}
For $p=2$ we have
\begin{align*}
    \zeta_{\textrm{sl}_2(\Z_2)}^{\co}(\boldsymbol{s})&=\zeta_{\Z_{2}^{3}}^{\co}(\boldsymbol{s})-\frac{Z_{f}(s_{1}-2)2^{2}t_{1}(1+2t_{1}t_{2})(1-2t_{1}^{2})}{(1-2^{2}t_{1})(1-2^{2}t_{1}t_{2})(1-2^{2}t_{1}^2t_{2}t_{3})(1-2^{-1})}\\ 
    &=\frac{1+t_1+6t_1^2+3t_1t_2+12t_1^3t_2-12t_1^3t_2t_3-4t_1^3t_2^2t_3-16t_1^4t_2^2t_3}{(1-2t_1)(1-4t_1t_2)(1-4t_1^2t_2t_3)(1-t_1t_2t_3)}\\
    &=\frac{N_{\textrm{sl}_2,2}(2,t_1,t_2,t_3)}{D_{\textrm{sl}_2,2}(2,t_1,t_2,t_3)}.
\end{align*}

\subsection{Solvable but non-nilpotent Lie algebra $L_1$}

Let
\[L_1=\langle e_1,e_2,e_3\mid [e_1,e_2]=e_3,[e_1,e_3]=e_2\rangle_{\Z}\]
be a solvable but non-nilpotent $\Z$-Lie algebra of rank 3. This gives $f(\bfx)=x_3^2-x_2^2$. 

For $p>2$ one can check that  (cf. \cite[Section 4.4]{KV})
\[Z_{f}(s)=\left(\frac{1-p^{-1}}{1-p^{-1-s}}\right)^2,\]
giving 
\begin{align*}
    \zeta_{L_1(\Zp)}^{\co}(\boldsymbol{s})&=\zeta_{\Z_{p}^3}^{\co}(\boldsymbol{s})-\frac{Z_{f}(s_{1}-2)p^{2}t_{1}(1+pt_{1}t_{2})(1-pt_{1}^{2})}{(1-p^{2}t_{1})(1-p^{2}t_{1}t_{2})(1-p^{2}t_{1}^2t_{2}t_{3})(1-p^{-1})}\\ 
    &=\frac{N_{L_1,1}(p,t_1,t_2,t_3)}{D_{L_1,1}(p,t_1,t_2,t_3)},
\end{align*}
where
\begin{align*}
    N_{L_1,1}(X,Y_1,Y_2,Y_3)&=1+Y_2-2XY_1^2+Y_1Y_2+XY_1Y_2-XY_1^2Y_2-X^2Y_1^3Y_2\\
    &-XY_1^2Y_2Y_3-X^2Y_1^3Y_2Y_3+X^2Y_1^4Y_2Y_3+X^3Y_1^4Y_2Y_3\\&-2X^2Y_1^3Y_2^2Y_3+X^3Y_1^4Y_2^2Y_3+X^3Y_1^5Y_2^2Y_3\\
    D_{L_1,1}(X,Y_1,Y_2,Y_3)&=(1-XY_1)^2(1-X^2Y_1Y_2)(1-X^2Y_1^2Y_2Y_3)(1-Y_1Y_2Y_3).
\end{align*}

For $p=2$, write $(x_2,x_3)=(x+y,y)$. We have
\begin{align*}
Z_f(s) &= \int |x|^{s} |x+2y|^{s} dx dy = \int_{x \in 2\Z_2+1} |x|^{s} |x+2y|^{s} dx dy + \int_{x \in 2\Z_2} |x|^{s} |x+2y|^{s} dx dy \\
&= \frac{1}{2}  +  \frac{1}{2} \int_{z\in \Z_2} |2z|^{s}|2z+2y|^{s} dz dy \\
&= \frac{1}{2} + \frac{1}{2} 4^{-s} \left( \frac{1-2^{-1}}{1-2^{-1-s} }\right)^2 = \frac{1}{4} \frac{2-2^{1-s}+2^{-2s}}{(1-2^{-1-s})^2}.
\end{align*}
Hence
\[Z_{f}(s_1-2)=
    \frac{(1-2^{-1})(1-2^{2-s_1}+2^{3-2s_1})}{(1-2^{1-s_1})^2},\]
    giving
\begin{align*}
    \zeta_{L_1(\Z_{2})}^{\co}(\boldsymbol{s})&=\zeta_{\Z_{2}^3}^{\co}(\boldsymbol{s})-\frac{Z_{f}(s_{1}-2)2^{2}t_{1}(1+2t_{1}t_{2})(1-2t_{1}^{2})}{(1-2^{2}t_{1})(1-2^{2}t_{1}t_{2})(1-2^{2}t_{1}^2t_{2}t_{3})(1-2^{-1})}\\ 
    &=\frac{N_{L_1,2}(2,t_1,t_2,t_3)}{D_{L_1,2}(2,t_1,t_2,t_3)},
\end{align*}
where
\begin{align*}
    N_{L_1,2}(2,Y_1,Y_2,Y_3)&=1-t_1+4t_1^2+4t_1^3-16t_1^4+3t_1t_2-6t_1^2t_2+12t_1^3t_2+8t_1^4t_2-32t_1^5t_2\\
    &-12t_1^3t_2t_3+8t_1^4t_2t_3+16t_1^5t_2t_3-4t_1^3t_2^2t_3-8t_1^4t_2^2t_3+32t_1^6t_2^2t_3,\\
    D_{L_1,2}(2,Y_1,Y_2,Y_3)&=(1-2t_1)^2(1-4t_1t_2)(1-4t_1^2t_2t_3)(1-t_1t_2t_3).  
\end{align*}

\subsection{Another solvable but non-nilpotent Lie algebra $L_2$}\label{subsec:L2}
Let
\[L_2=\langle e_1,e_2,e_3\mid [e_1,e_2]=e_3,[e_1,e_3]=-e_2\rangle_{\Z}\]
be another solvable but non-nilpotent $\Z$-Lie algbera of rank 3. This gives $f(\bfx)=x_2^2+x_3^2$. 

For $p>2$, $f(\bfx)$ is reducible over $\Fp$ if $p \equiv 1 \bmod{4}$ and irreducible if $p \equiv 3 \bmod{4}$, giving
\[
Z_{f}(s)=
\begin{cases}
\left(\frac{1-p^{-1}}{1-p^{-1-s}}\right)^2 & p \equiv 1 \bmod{4}, \\
\frac{1-p^{-2}}{1-p^{-2-2s}} & p \equiv 3 \bmod{4}.
\end{cases}
\]

For $p=2$, note that
\[
v_2(x^2 + y^2) = \begin{cases} 0 & 2 \nmid x + y ,\\ 1 & 2 \nmid x,y ,\\ 2+v_2( (\frac{x}{2})^2 + (\frac{y}{2})^2) & 2 \mid x,y.
\end{cases}
\]
Hence we have
\begin{align*}
Z_f(s) &= \int_{x+y  \in 2\Z_2 +1}|x^2+y^2|^s dx dy + \int_{x,y \in 2\Z_2 + 1}|x^2+y^2|^s dx dy + \int_{x,y \in 2\Z_2} |x^2+y^2|^s dx dy \\
&= \frac{1}{2} + \frac{2^{-s}}{4} + \frac{4^{-s}}{4} Z_f(s),
\end{align*}
giving
\[
Z_f(s) = \frac{2^{-1}}{1-2^{-1-s}}.
\]
Combining all those cases, we get
\begin{align*}
    Z_{f}(s-2)=\begin{cases}
    \left(\frac{1-p^{-1}}{1-p^{1-s}}\right)^2&p \equiv 1 \bmod{4},\\
    \frac{1-p^{-2}}{1-p^{2-2s}} &p \equiv 3 \bmod{4}, \\
    \frac{2^{-1}}{1-p^{1-s}}&p=2
\end{cases}=\frac{1-p^{-1}}{1-p^{1-s}}\frac{1- \chi(p)p^{-1}}{1-\chi(p)p^{1-s}},
\end{align*}
where $\chi$ is the quadratic character mod $4$ given as
\[
\chi(p) = \begin{cases} 1 & p \equiv 1 \bmod{4}, \\ -1 & p \equiv -1 \bmod{4},\\ 0 & p =2. \end{cases}
\]

Hence, for $p\equiv 1\bmod 4$ we get
\begin{align*}
    \zeta_{L_2(\Zp)}^{\co}(\boldsymbol{s})&=\zeta_{\Z_{p}^3}^{\co}(\boldsymbol{s})-\frac{Z_{f}(s_{1}-2)p^{2}t_{1}(1+pt_{1}t_{2})(1-pt_{1}^{2})}{(1-p^{2}t_{1})(1-p^{2}t_{1}t_{2})(1-p^{2}t_{1}^2t_{2}t_{3})(1-p^{-1})}\\ 
     &=\frac{N_{L_2,1}(p,t_1,t_2,t_3)}{D_{L_2,1}(p,t_1,t_2,t_3)},
\end{align*}
where
\begin{align*}
    N_{L_2,1}(X,Y_1,Y_2,Y_3)&=1+Y_2-2XY_1^2+Y_1Y_2+XY_1Y_2-XY_1^2Y_2-X^2Y_1^3Y_2\\
    &-XY_1^2Y_2Y_3-X^2Y_1^3Y_2Y_3+X^2Y_1^4Y_2Y_3+X^3Y_1^4Y_2Y_3\\&-2X^2Y_1^3Y_2^2Y_3+X^3Y_1^4Y_2^2Y_3+X^3Y_1^5Y_2^2Y_3,\\
    D_{L_2,1}(X,Y_1,Y_2,Y_3)&=(1-XY_1)^2(1-X^2Y_1Y_2)(1-X^2Y_1^2Y_2Y_3)(1-Y_1Y_2Y_3),
\end{align*}
for  $p\equiv 3\bmod 4$ we get
\begin{align*}
    \zeta_{L_2(\Zp)}^{\co}(\boldsymbol{s})&=\zeta_{\Z_{p}^3}^{\co}(\boldsymbol{s})-\frac{Z_{f}(s_{1}-2)p^{2}t_{1}(1+pt_{1}t_{2})(1-pt_{1}^{2})}{(1-p^{2}t_{1})(1-p^{2}t_{1}t_{2})(1-p^{2}t_{1}^2t_{2}t_{3})(1-p^{-1})}\\ 
     &=\frac{N_{L_2,3}(p,t_1,t_2,t_3)}{D_{L_2,3}(p,t_1,t_2,t_3)},
\end{align*}
where
\begin{align*}
    N_{L_2,3}(X,Y_1,Y_2,Y_3)&=1+Y_1+Y_1Y_2+XY_1Y_2+XY_1^2Y_2-X^2Y_1^3Y_2+XY_1^2Y_2Y_3\\&-X^2Y_1^3Y_2Y_3-X^2Y_1^4Y_2Y_3-X^3Y_1^4Y_2Y_3-X^4Y_1^4Y_2^2Y_3-X^3Y_1^5Y_2^2Y_3,\\
    D_{L_2,3}(X,Y_1,Y_2,Y_3)&=(1-X^2Y_1^2)(1-X^2Y_1Y_2)(1-X^2Y_1^2Y_2Y_3)(1-Y_1Y_2Y_3),
\end{align*}
and finally, for $p=2$, we get
\begin{align*}
    \zeta_{L_2(\Z_{2})}^{\co}(\boldsymbol{s})&=\zeta_{\Z_{2}^3}^{\co}(\boldsymbol{s})-\frac{Z_{f}(s_{1}-2)2^{2}t_{1}(1+2t_{1}t_{2})(1-2t_{1}^{2})}{(1-2^{2}t_{1})(1-2^{2}t_{1}t_{2})(1-2^{2}t_{1}^2t_{2}t_{3})(1-2^{-1})}\\
    &=\frac{1+t_1-2t_1^2+3t_1t_2-4t_1^3t_2-4t_1^3t_2t_3-4t_1^3t_2^2t_3}{(1-2t_1)(1-4t_1t_2)(1-4t_1^2t_2t_3)(1-t_1t_2t_3)}\\
    &=:\frac{N_{L_2,2}(2,t_1,t_2,t_3)}{D_{L_2,2}(2,t_1,t_2,t_3)}.
\end{align*}

\subsection{Solvable Lie algebras in general}


In \cite{G-SK}, Gonz\'{a}lez-S\'{a}nchez and Klopsch classified all the solvable $\Zp$-Lie algebras of rank 3 for $p\geq3$. Using this result, in \cite[Section 4.4]{KV} the authors showed that  except $\Zp^3$, $H(\Zp)$, and $L_1(\Zp)$, computing $\zeta_{L(\Z_{p})}^{\co}(\bss)$ for a solvable $L$ reduces to the computation of the Igusa zeta function for the polynomial
\[f(\bfx)=x_2^2-dx_3^2,\]
where $d\in\Zp$.

We distinguish two cases. If $d=p^{k}\rho$, where $\rho\in\Zp^{*}$ is a non-square modulo $p$, we define
\[Z_{1,k}(s):=Z_{f}(s)\]
If $d=p^{k}u^2$, where $u\in\Zp^{*}$, we set
\[Z_{2,k}(s):=Z_f(s).\]

From \cite[Section 4.4]{KV} we obtain that
\begin{align*}
    Z_{1,0}(s)&=\frac{1-p^{-2}}{1-p^{-2}t^2},&Z_{2,0}(s)&=\left(\frac{1-p^{-1}}{1-p^{-1}t}\right)^2,\\
    Z_{1,1}(s)&=Z_{2,1}(s)=\frac{1-p^{-1}}{1-p^{-1}t},&
    Z_{i,k+2}(s)&=p^{-1}t^2Z_{i,k}(s)+1-p^{-1}
\end{align*}
for $i\in\{1,2\}$ and $k\geq2$. This yields
\begin{pro}
  Let $L$ be a solvable $\Z$-Lie algebra of rank 3 except $\Z^3$, $H$, and $L_1$. Then, for $p\geq3$, suitable $k,\iota\in\N_{0}$ and $i\in\{1,2\}$, we have
   \begin{align*}
    \zeta_{L(\Zp)}^{\co}(\bss)=\zeta_{\Z_{p}^3}^{\co}(\boldsymbol{s})-\frac{Z_{i,k}(s_{1}-2)(p^{2}t_{1})^{\iota+1}(1+pt_{1}t_{2})(1-pt_{1}^{2})}{(1-p^{2}t_{1})(1-p^{2}t_{1}t_{2})(1-p^{2}t_{1}^2t_{2}t_{3})(1-p^{-1})}.
\end{align*} 
\end{pro}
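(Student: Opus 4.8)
The plan is to combine the explicit formula of Theorem~\ref{thm:cozeta.3dim} with the classification of solvable rank-$3$ $\Zp$-Lie algebras of Gonz\'alez-S\'anchez and Klopsch \cite{G-SK}. Recall that Theorem~\ref{thm:cozeta.3dim}, applied with $i=0$, already expresses $\zeta_{L(\Zp)}^{\co}(\bss)$ for \emph{any} $3$-dimensional $\Zp$-Lie algebra in terms of the ternary quadratic form $f(\bfx)=\bfx^{t}\mathcal{A}_{L_p}\bfx$:
\[
\zeta_{L(\Zp)}^{\co}(\bss)=\zeta_{\Z_p^3}^{\co}(\bss)-\frac{Z_f(s_1-2)\,p^2 t_1(1+pt_1t_2)(1-pt_1^2)}{(1-p^2t_1)(1-p^2t_1t_2)(1-p^2t_1^2t_2t_3)(1-p^{-1})}.
\]
Since Igusa's local zeta function $Z_f$ depends only on the $\GL_3(\Zp)$-equivalence class of $f$ (a standard change-of-variables argument; see \cite{Denef,Igusa}), and since by the transformation law $\mathcal{A}_{PL_p}=\det(P)(P^t)^{-1}\mathcal{A}_{L_p}P^{-1}$ replacing $L_p$ by $PL_p$ with $P\in\GL_3(\Zp)$ replaces $f$ by an equivalent form, it suffices to bring $f$ into a normal form for every isomorphism class on the list of \cite{G-SK}.

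First I would invoke the Gonz\'alez-S\'anchez--Klopsch classification of solvable $\Zp$-Lie algebras of rank $3$ for $p\geq 3$, and run through it exactly as in \cite[Section~4.4]{KV}: for each class I would compute $\mathcal{A}_{L_p}$ and normalize via the transformation law, to find that every such $L(\Zp)$ other than $\Zp^3$, $H(\Zp)$, $L_1(\Zp)$ has $f$ equivalent over $\GL_3(\Zp)$ to a form $c\,(x_2^2-dx_3^2)$ with $v_p(c)=\iota$ and $v_p(d)=k$; here $d=p^{k}\rho$ with $\rho$ a non-square unit corresponds to $i=1$ and $d=p^{k}u^2$ with $u\in\Zp^{*}$ corresponds to $i=2$. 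Note that the scalar $c$ only enters through its valuation, since $|c|_p=|p^{\iota}|_p$. Crucially, the form $f$ attached to $L$ is the same object whether one tracks a single cotype variable or all $d$ of them --- as observed in the remark after \eqref{eq:sub.cond3}, the subalgebra condition is insensitive to this --- so this part of the argument transfers verbatim from \cite{KV}.

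Next I would assemble the Igusa data. The four base cases $Z_{1,0},Z_{2,0},Z_{1,1},Z_{2,1}$ are computed directly: $x_2^2-u^2x_3^2$ is a product of two distinct $\Zp$-rational linear forms, giving $Z_{2,0}(s)=\bigl((1-p^{-1})/(1-p^{-1}t)\bigr)^2$; $x_2^2-\rho x_3^2$ with $\rho$ a non-square is anisotropic modulo $p$, giving $Z_{1,0}(s)=(1-p^{-2})/(1-p^{-2}t^2)$; and the $k=1$ forms are Eisenstein, giving $Z_{1,1}(s)=Z_{2,1}(s)=(1-p^{-1})/(1-p^{-1}t)$. The recursion $Z_{i,k+2}(s)=p^{-1}t^2 Z_{i,k}(s)+1-p^{-1}$ follows by splitting $\Zp^2$ according to whether $p\mid x_2$ and, on the sublocus $p\mid x_2$, substituting $x_2\mapsto px_2$, so that $x_2^2-p^{k+2}(\cdot)x_3^2=p^2\bigl(x_2^2-p^{k}(\cdot)x_3^2\bigr)$; all of this is recorded in \cite[Section~4.4]{KV}. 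Finally, $f$ being equivalent to $c\,(x_2^2-dx_3^2)$ with $v_p(c)=\iota$ gives $Z_f(s)=t^{\iota}Z_{i,k}(s)$ by homogeneity of the integrand in $p$, hence $Z_f(s_1-2)=(p^{2}t_1)^{\iota}Z_{i,k}(s_1-2)$; substituting this into the displayed identity yields the assertion.

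The hard part is the middle step: extracting from the Gonz\'alez-S\'anchez--Klopsch list that $f$ is always equivalent to a \emph{degenerate} (rank $\le 2$, $x_1$-free) scaled binary form $c\,(x_2^2-dx_3^2)$, together with reading off the invariants $(\iota,k,i)$ class by class. This is a finite but somewhat lengthy case analysis. It is, however, essentially identical to the computation already carried out in \cite[Section~4.4]{KV} for the ordinary subalgebra zeta function, precisely because $f$ is an invariant of the Lie algebra structure alone and does not see the refinement from $\zeta_L(s)$ to $\zeta_L^{\co}(\bss)$; so I expect this step to go through with only cosmetic changes.
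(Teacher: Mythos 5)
Your proof is correct and takes essentially the same route as the paper: the paper's own proof consists of citing \cite[Section 4.4]{KV} for the classification-driven reduction (via \cite{G-SK}, valid for $p\geq 3$) of the invariant quadratic form $f$ to a scaled binary form $c(x_2^2-dx_3^2)$, recording the resulting Igusa data $Z_{i,k}$ and its recursion, and substituting into Theorem~\ref{thm:cozeta.3dim}. Your write-up makes these steps explicit, correctly isolating the $(p^2t_1)^{\iota}$ contribution from the scalar $c$ via $Z_{cg}(s)=|c|_p^{s}Z_g(s)$ and identifying the remaining factor with $Z_{i,k}$, which matches the paper's argument.
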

Again, for $s_1=s_2=s_3=s$ we get the expected result
\begin{align*}
    \zeta_{L}(s)=
\zeta_{\Z_{p}^3}(s)-\frac{Z_{i,k}(s-2)(p^2t)^{\iota+1}}{(1-p^{2}t)(1-p^{2}t^2)(1-p^{-1})},    
\end{align*} 
as stated in \cite[Proposition 4.1]{KV}.
\begin{rem}
    Note that this formula does not cover $p=2$, and one needs to compute $Z_{f}(s)$ separately (as we did for $L_2$ in Section \ref{subsec:L2}) to compute  $\zeta_{L(\Zp)}^{\co}(\bss)$ for all primes. 
\end{rem}
\subsection{Uniformity for $\zeta_{L}(s)$ and $\zeta_{L}^{\co}(\bss)$}
Recall the definition of the uniformity in Definition \ref{def:uniform.co}. Our result
\begin{align*}
    \zeta_{L}^{\co}(\boldsymbol{s})=\zeta_{\Z_{p}^3}^{\co}(\boldsymbol{s})-\frac{Z_{f}(s_{1}-2)p^{2}t_{1}(1+pt_{1}t_{2})(1-pt_{1}^{2})}{(1-p^{2}t_{1})(1-p^{2}t_{1}t_{2})(1-p^{2}t_{1}^2t_{2}t_{3})(1-p^{-1})}
\end{align*}
implies that for any $\Z$-Lie algebra $L$ of rank 3, the uniformity of both $\zeta_{L}^{\co}(\boldsymbol{s})$ and $\zeta_{L}(s)$ only depends on the behavior of the same $Z_{f}(s-2)$ when $p$ varies. Hence in this case $\zeta_{L}^{\co}(\boldsymbol{s})$ and $\zeta_{L}(s)$ always have the same uniformity. 

Furthermore, as discussed in Section \ref{sec:prelim}, $Z_{f}(s)$ is related to the number of solutions
\[N_{m}=\left|\{\bfx\in(\Z/p^{m}\Z)^{d}|f(\bfx)=0\}\right|.\]

Luckily, since $d=3$ in our case, for any $L$ our polynomial $f$ is always either 0 or a homogeneous quadratic form as we observed. In \cite[Proposition 2.1]{Lee}, the second author proved that for any homogeneous quadratic form $f$  the number $N_{m}$ is always PORC (Polynomial On Residue Classes). In other words, there exists $N\in\N$ and finitely many polynomials $W_{i}(X)\in\Q[X]$ for $0\leq i\leq N-1$ such that for each prime $p$, if $p\equiv i\bmod N$, then $N_{m}=W_{i}(p)$. In particular, this implies that  both $\zeta_{L}^{\co}(\boldsymbol{s})$ and $\zeta_{L}(s)$ are always either uniform  (eg. $L=\Z^3,H,\textrm{sl}_2(\Z),L_1$) or at least finitely uniform (eg. $L=L_2$). This proves Theorem \ref{thm:dim3.uniformity}.

\begin{rem}
    As we initially defined $\zeta_{L}^{\co}(\bss)$ for any $\mcO$-Lie algebra $L$, there is no need to limit oneself to $\mcO=\Z$ and one can more or less compute $\zeta_{L(\mcO_{\mfp})}^{\co}(\bss)$ for any ring of integers $\mcO$ of a number field $K$ using similar approach. However, in general it is very difficult to compute $\zeta_{L(\mcO_{\mfp})}^{\co}(\bss)$ explicitly for all non-zero prime $\mfp\in\Spec(\mcO)$, even in our cases. Since this is what we need in Section \ref{sec:explicit}, we decided to focus on $\Z$ and state the formulas as explicit as possible.
\end{rem}

\section{Density results and other asymptotics}\label{sec:explicit}
Let $L\in\{\Z^{3},H,\textrm{sl}_2(\Z),L_1,L_2\}$ be a $\Z$-Lie algebra of rank 3. Since we calculated $\zeta_{L(\Zp)}^{\co}(\bss)$ for all primes $p$ in Section \ref{sec:rank3.explicit}, the Euler product
\[\zeta_{L}^{\co}(\bss)=\prod_{p,\,prime}\zeta_{L(\Zp)}^{\co}(\bss)\]
allows us to explicitly describe $\zeta_{L}^{\co}(\bss)$. For $1\leq m\leq 3$, the explicit formula for $\zeta_{L}^{\co}(\bss)$ gives the Dirichlet series counting subalgebras with corank at most $m$ directly as (e.g, \cite[Section 3]{CKK})
\begin{align*}
\zeta_{L}^{(m)}(s) &= \lim_{s_{m+1}\rightarrow \infty} \cdots \lim_{s_d \rightarrow \infty} \zeta_{L}^{\co}(s,\ldots,s, s_{m+1}, \ldots,s_d)\\
&= \prod_{p} \left( \lim_{s_{m+1}\rightarrow \infty} \cdots \lim_{s_d \rightarrow \infty} \zeta_{L(\Zp)}^{\co}(s,\ldots,s, s_{m+1}, \ldots,s_d) \right).
\end{align*}

Let $N_{L}^{(m)}$ denote the number of sublattices $\Lambda$ of $L$ of index less than $X$ such that $\Lambda$ has corank at most $m$, and let $P_{L}^{(m)}$ denote the proportion of sublattices of $L$ of corank at most $m$. In this section, using formulas in Section \ref{sec:rank3.explicit}, we compute $\zeta_{L}^{\co}(\bss)$ for specified $\Z$-Lie algebras of rank 3 and compute $N_{L}^{(1)}$, $N_{L}^{(2)}$, $N_{L}^{(3)}$, $P_{L}^{(1)}$, and $P_{L}^{(2)}$ accordingly. This gives us Theorem \ref{thm:numeric}.

In the expression $t_i = p^{-s_i}$ we considered in expressing local factors, the limit $s_i \rightarrow \infty$ amounts to simply letting $t_i = 0$.
\subsection{The abelian case $\Z^3$}
We start from $L=\Z^{3}$. Since
\begin{align*}
    \zeta_{\Z_{p}^3}^{\co}(\boldsymbol{s})=\frac{1+t_1+pt_1+t_1t_2+pt_1t_2+pt_1^2t_2}{(1-p^2t_1)(1-p^2t_1t_2)(1-t_1t_2t_3)}
\end{align*}
for all primes $p$, we get
\begin{align*}
\zeta_{\Z^3}^{(1)}(s) &= \prod_p  \left( \lim_{s_{2}\rightarrow \infty}\lim_{s_{3}\rightarrow \infty}\zeta_{\Z_{p}^3}^{\co}(s,s_2,s_3) \right)= \zeta(s-2)\prod_{p}(1+(p+1)p^{-s}), \\
\zeta_{\Z^3}^{(2)}(s) &=\prod_p \left( \lim_{s_{3}\rightarrow \infty}\zeta_{\Z_{p}^3}^{\co}(s,s,s_3) \right) =  \zeta(s-1)\zeta(s-2) \zeta(s)\zeta(3s)^{-1}, \\
\zeta_{\Z^3}^{(3)}(s) &=\prod_p \zeta_{\Z_{p}^3}^{\co}(s,s,s) =\zeta(s)\zeta(s-1)\zeta(s-2).
\end{align*}

All three series have a simple pole at $s=3$, so we have
\[
N_{\Z^3}^{(m)}(X) \sim \left(\lim_{s \rightarrow 3} (s-3) \zeta_{\Z^3}^{(i)}(s)  \right) \frac{X^3}{3},
\]
giving
\begin{align*}
N_{\Z^3}^{(1)}(X) &\sim 
\frac{1}{3} \prod_{p} (1+p^{-2}+p^{-3}) X^3, \\
N_{\Z^3}^{(2)}(X) &\sim 
\frac{\zeta(2) \zeta(3)}{3\zeta(9)}X^3, \\
N_{\Z^3}^{(3)}(X) & \sim \frac{\zeta(2)\zeta(3)}{3} X^3.
\end{align*}
 Since $N_{\Z^{3}}^{(3)}$ gives the total number of sublattices of index less than $X$ in $\Z^{3}$, 
 one also gets
\begin{align*}
    P_{\Z^{3}}^{(1)}&\rightarrow 0.885,&P_{\Z^{3}}^{(2)}&\rightarrow 0.998
\end{align*}
as $X\rightarrow \infty$.

\subsection{The Heisenberg Lie algebra $H$}
Recall
\[\zeta_{H(\Zp)}^{\co}(\bss)=\frac{N_{H}(p,t_1,t_2,t_3)}{D_{H}(p,t_1,t_2,t_3)},\]
where
\begin{align*}
    N_H&=1+t_1+pt_1+p^2t_1^2+t_1t_2+pt_1t_2+pt_1^2t_2+p^2t_1^2t_2-p^2t_1^3t_2t_3-p^3t_1^3t_2t_3-p^3t_1^4t_2t_3\\
    &-p^4t_1^4t_2t_3-p^2t_1^3t_2^2t_3-p^3t_1^4t_2^2t_3-p^4t_1^4t_2^2t_3-p^4t_1^5t_2^2t_3, \\
    D_H&=(1-p^{3}t_{1}^{2})(1-p^{2}t_{1}t_{2})(1-p^{2}t_{1}^2t_{2}t_{3})(1-t_{1}t_{2}t_{3})
\end{align*}
for all prime $p$. This gives
\begin{align*}
\zeta_{H}^{(1)}(s) &= \prod_p  \left( \lim_{s_{2}\rightarrow \infty}\lim_{s_{3}\rightarrow \infty}\zeta_{H(\Zp)}^{\co}(s,s_2,s_3) \right)= \zeta(s-1)\zeta(2s-3) \prod_p (1+p^{-s}-p^{1-2s}-p^{3-3s}), \\
\zeta_{H}^{(2)}(s) &=\prod_p \left( \lim_{s_{3}\rightarrow \infty}\zeta_{H(\Zp)}^{\co}(s,s,s_3) \right)\\ & = \zeta(s-1)\zeta(2s-2)\zeta(2s-3) \cdot\prod_p (1+p^{-s}+p^{-2s}-p^{3-3s}-p^{3-4s}-p^{2-4s})  \\
\zeta_{H}^{(3)}(s) &=\prod_p \zeta_{H(\Zp)}^{\co}(s,s,s) =\zeta(s)\zeta(s-1)\zeta(2s-2)\zeta(2s-3)\zeta(3s-3)^{-1}.
\end{align*}

All three series have a double pole at $s=2$, so we have
\[
N_{H}^{(m)}(X) \sim \left(\lim_{s \rightarrow 2} (s-2)^2 \zeta_H^{(m)}(s)  \right) \frac{X^2 \log X}{2},
\]
and one can calculate
\begin{align*}
N_{H}^{(1)}(X) &\sim 
\frac{1}{4} \prod_{p} (1+p^{-2}-2p^{-3}) X^2 \log X, \\
N_{H}^{(2)}(X) &\sim 
\frac{1}{4}\zeta(2) \prod_p (1+p^{-2}-p^{-3}+p^{-4}-p^{-5}-p^{-6}) X^2 \log X, \\
N_{H}^{(3)}(X) & \sim 
\frac{\zeta(2)^2}{4\zeta(3)} X^2 \log X,
\end{align*}
which also gives
\begin{align*}
    P_{H}^{(1)}&\rightarrow 0.492,&P_{H}^{(2)}&\rightarrow 0.975
\end{align*}
as $X\rightarrow \infty$.

\subsection{The rest of examples in Section \ref{sec:rank3.explicit}}
From the formulas in Section \ref{sec:rank3.explicit}, analogous computations give us
\begin{align*}
     P_{\textrm{sl}_2(\Z)}^{(1)}&\rightarrow 0.488&P_{\textrm{sl}_2(\Z)}^{(2)}&\rightarrow 0.974\\   P_{L_1}^{(1)}&\rightarrow 0.492&P_{L_1}^{(2)}&\rightarrow 0.975\\ P_{L_2}^{(1)}&\rightarrow 0.482&P_{L_2}^{(2)}&\rightarrow 0.970
\end{align*}
as $X\rightarrow \infty$. This completes the proof of Theorem \ref{thm:numeric}.

\subsection{General density result}
We finally note that any subalgebra cotype zeta functions of $\Z$-Lie algebras of rank 3 exhibit similar patterns to one of our examples discussed.

Define two $\Z_p$ quadratic forms {\it similar} if those two forms are equivalent up to $\Z_p^{\times}$-multiplication and $\GL(\Z_p)$-action. Write $f \sim g$ if two forms are similar.

Define the {\it discriminant} of an integral  $\Z_p$-quadratic form  $f = \sum_{i,j=1}^{n} a_{ij} x_i x_j$ $(a_{ij}=a_{ji} \in \frac{1}{2} \Z)$ for $p \neq 2$ as $\mathrm{Disc}(f):=\det \left(a_{ij} \right)_{i,j=1, \cdots, n}$.

\begin{lem} \label{lem:quad_similar}
(\cite{Kitaoka}, Theorem 5.2.4) For $p \neq 2$, any $\Z_p$-quadratic form $f$ of rank $r$ with invertible discriminant is $\GL_r(\Z_p)$-equivalent to $x_1^2 + \cdots + x_{r-1}^2 + D x_r^2$, where $D = \mathrm{Disc}(f)$. 
\end{lem}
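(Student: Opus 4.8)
The plan is to prove the lemma in two stages: first diagonalize $f$ over $\Zp$, and then collapse the diagonal entries using the elementary fact that a binary form of unit discriminant over $\Zp$ (for $p$ odd) represents $1$ by a primitive vector. For \emph{Stage 1}, let $A=(a_{ij})$ be the symmetric Gram matrix of $f$, so $f(\mathbf{x})=\mathbf{x}^{t}A\mathbf{x}$; since $p\neq 2$ we have $a_{ij}\in\Zp$ and $\det A=\mathrm{Disc}(f)\in\Zp^{\times}$. The reduction $\overline{A}$ modulo $p$ is a nonzero quadratic form over $\Fp$ with $p$ odd, hence represents a nonzero value, so lifting gives a primitive vector $v\in\Zp^{r}$ with $f(v)\in\Zp^{\times}$. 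Because $f(v)$ is a unit, the projection $w\mapsto w-\frac{B(w,v)}{f(v)}v$ (with $B$ the polar form of $f$) is defined over $\Zp$ and exhibits $\Zp^{r}=\Zp v\oplus v^{\perp}$ as an orthogonal direct sum of lattices, where the rank-$(r-1)$ form on $v^{\perp}$ again has unit discriminant, equal to $\mathrm{Disc}(f)/f(v)$ up to a unit square. Iterating, $f\cong\langle c_{1}\rangle\perp\cdots\perp\langle c_{r}\rangle$ with all $c_{i}\in\Zp^{\times}$ and $c_{1}\cdots c_{r}\equiv\mathrm{Disc}(f)$ modulo $(\Zp^{\times})^{2}$.

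For \emph{Stage 2}, the key claim is that $\langle a,b\rangle\cong\langle 1,ab\rangle$ over $\Zp$ for all $a,b\in\Zp^{\times}$ when $p$ is odd. The conic $ax^{2}+by^{2}=1$ has $p-\left(\frac{-ab}{p}\right)\geq p-1>0$ points over $\Fp$, and any such point $(x_{0},y_{0})$ has a coordinate in $\Fp^{\times}$; if $x_{0}\in\Fp^{\times}$, then the partial derivative in $x$ of $ax^{2}+by_{0}^{2}-1$ at $x_{0}$ is $2ax_{0}\in\Fp^{\times}$, so Hensel's lemma lifts $(x_{0},y_{0})$ to a primitive $(x_{1},y_{0})\in\Zp^{2}$ with $ax_{1}^{2}+by_{0}^{2}=1$. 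Thus $\langle a,b\rangle$ splits off $\langle 1\rangle$ orthogonally over $\Zp$, and comparing discriminants the complement is $\langle ab\rangle$. Applying this repeatedly to the diagonal form from Stage 1,
\[
\langle c_{1},c_{2},c_{3},\ldots,c_{r}\rangle\cong\langle 1,c_{1}c_{2},c_{3},\ldots,c_{r}\rangle\cong\cdots\cong\langle 1,1,\ldots,1,c_{1}c_{2}\cdots c_{r}\rangle,
\]
and since $\langle Du^{2}\rangle\cong\langle D\rangle$ for any $u\in\Zp^{\times}$, the last entry can be taken to be any chosen representative $D$ of the discriminant square class, in particular $D=\mathrm{Disc}(f)$.

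The subtle point throughout is that one works over $\Zp$ rather than $\Qp$: every splitting must be an orthogonal direct-sum decomposition of \emph{lattices}, which is exactly why at each step one must represent a \emph{unit} value by a \emph{primitive} vector, so that the relevant projection has entries in $\Zp$. The hypotheses $p\neq 2$ (so that $2\in\Zp^{\times}$, underpinning both the Gram matrix formalism and completing the square) and $\mathrm{Disc}(f)\in\Zp^{\times}$ (so that reduction mod $p$ stays nondegenerate at every stage) are both essential and cannot be dropped. Of course, the statement is precisely \cite[Theorem~5.2.4]{Kitaoka}, which one may simply cite.
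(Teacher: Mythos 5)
Your proof is correct. The paper itself does not prove this lemma but simply cites \cite[Theorem 5.2.4]{Kitaoka}, so there is no "paper's proof" to compare against; what you have written out is the standard two-stage argument (diagonalization by iterated orthogonal splitting, then collapsing diagonal entries via the relation $\langle a,b\rangle\cong\langle 1,ab\rangle$), and it is complete. Both stages correctly exploit the two hypotheses: $p\neq 2$ makes $2\in\Zp^{\times}$, so the polar form $B$ and the orthogonal projection $w\mapsto w-\frac{B(w,v)}{f(v)}v$ are $\Zp$-defined and the reduced form over $\Fp$ is honestly symmetric; and $\mathrm{Disc}(f)\in\Zp^{\times}$ guarantees that at every stage the reduced form is nondegenerate, hence nonzero, hence represents a nonzero value (your implicit use of the fact that over a field of odd characteristic a quadratic form vanishing identically must have zero Gram matrix). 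The point-counting bound $p-\left(\frac{-ab}{p}\right)\geq p-1\geq 2$ plus Hensel correctly produces a primitive vector of value $1$, and passing to the orthogonal complement and comparing discriminants modulo $(\Zp^{\times})^{2}$ finishes. Nothing is missing; the proposal is a valid stand-alone proof of the cited classical theorem.
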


\begin{thm}
Let $L$ be a  $\Z$-Lie algebra of rank 3 whose corresponding quadratic form is $f=
f(\bfx) = \bfx^t \mathcal{A}_{L} \bfx
$.    \begin{enumerate}
        \item If $f$ is of rank $0$, then $ \zeta_{L}^{\co}(\bss) = \zeta_{\Z^3}^{\co}(\bss)$ and all $\zeta_{L}^{(m)}(\bss)$ for $m=1,2,3$ have a simple pole at $s=3$.
        \item If $f$ is of rank $1$, then $\zeta_{L(\Zp)}^{\co}(\bss)$ is the same as $\zeta_{H(\Zp)}^{\co}(\bss)$ for all but finitely many $p$, and all $\zeta_{L}^{(m)}(s)$ have a double pole at $s=2$.
        \item If $f$ is of rank $3$, then $\zeta_{L(\Zp)}^{\co}(\bss)$ is the same as $\zeta_{\textrm{sl}_2(\Zp)}^{\co}(\bss)$ for all but finitely many $p$, and all $\zeta_{L}^{(m)}(s)$ have a simple pole at $s=2$.
        \item If $f$ is of rank $2$ and is reducible in $\Z$, then $\zeta_{L(\Zp)}^{\co}(\bss)$ is the same as $\zeta_{L_1(\Zp)}^{\co}(\bss)$ for all but finitely many $p$, and all $\zeta_{L}^{(m)}(s)$ have a double pole at $s=2$.        
        \item If $f$ is of rank $2$ and is irreducible, then all $\zeta_{L}^{(m)}(s)$ have a simple pole at $s=2$.
    \end{enumerate}
\end{thm}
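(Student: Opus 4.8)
The plan is to combine the explicit local formula of Theorem~\ref{thm:cozeta.3dim} with the $\Zp$-classification of quadratic forms (Lemma~\ref{lem:quad_similar}) and the global Euler products computed in Section~\ref{sec:explicit} for $\Z^3, H, \textrm{sl}_2(\Z), L_1, L_2$. The starting observation is that, by Theorem~\ref{thm:cozeta.3dim}, $\zeta^{\co}_{L(\Zp)}(\bss)$ depends on $f$ only through the $\GL_3(\Zp)$-equivalence class of $f\otimes\Zp$ up to multiplication by a unit of $\Zp$, since Igusa's zeta function $Z_f$ is invariant under precisely these operations. First I would diagonalise the symmetric matrix of $f$ over $\Zp$ and apply Lemma~\ref{lem:quad_similar}: for all $p$ outside a finite set $S=S(f)$, the form $f\otimes\Zp$ is $\GL_3(\Zp)$-equivalent to $\langle 1,\dots,1,D_p\rangle$ with $\rk f$ nonzero entries and $D_p\in\Zp^\times$, and since scaling by a nonsquare unit moves $D_p$ to the other class in $\Zp^\times/(\Zp^\times)^2$, the similarity class of $f\otimes\Zp$ is then determined by $\rk f$ alone when $\rk f\in\{0,1,3\}$, while for $\rk f=2$ exactly two classes survive, that of $x_2^2-x_3^2$ and that of $x_2^2-\rho x_3^2$ with $\rho\in\Zp^\times$ a nonsquare, distinguished by whether $f\otimes\Qp$ is isotropic. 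Comparing the corresponding values of $Z_f$ with Section~\ref{sec:rank3.explicit}, for $p\notin S$ the factor $\zeta^{\co}_{L(\Zp)}(\bss)$ equals $\zeta^{\co}_{\Zp^3}(\bss)$, $\zeta^{\co}_{H(\Zp)}(\bss)$ or $\zeta^{\co}_{\textrm{sl}_2(\Zp)}(\bss)$ according as $\rk f$ is $0$, $1$ or $3$ — the rank-$3$ step resting on the fact that nondegenerate ternary forms over $\Zp$ ($p$ odd) comprise a single similarity class, namely that of the form $x_3^2+4x_1x_2$ of $\textrm{sl}_2(\Z)$ — while for $\rk f=2$ it equals the Euler factor attached to $x_2^2-x_3^2$ (hence to $L_1$) when $f\otimes\Qp$ is isotropic and the one attached to $x_2^2-\rho x_3^2$ (hence the $p\equiv3\bmod4$-type factor of $L_2$) otherwise. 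The finitely many $p\in S$, where $f\otimes\Zp\sim p^i g$ with $g$ unimodular, are covered by Theorem~\ref{thm:cozeta.3dim} applied to $p^iL$, which still presents $\zeta^{\co}_{L(\Zp)}(\bss)$ as a rational function of $p^{-s_1},p^{-s_2},p^{-s_3}$ holomorphic at the relevant special point.

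Granting this, the cases $\rk f\in\{0,1,3\}$ and $\rk f=2$ with $f$ reducible over $\Q$ follow immediately: in each of them the similarity class of $f\otimes\Zp$ is the same for every $p\notin S$ (for reducible rank-$2$ forms because $f\otimes\Qp$ is then isotropic at every $p$), so $\zeta^{(m)}_L(s)=\prod_p\zeta^{(m)}_{L(\Zp)}(s)$ agrees, up to finitely many Euler factors holomorphic and non-vanishing at $s=3$ (resp.\ at $s=2$), with the global series of $\Z^3$, $H$, $\textrm{sl}_2(\Z)$ or $L_1$ computed in Section~\ref{sec:explicit}; the pole orders are read off from those formulas — a simple pole at $s=3$ for $\zeta^{(m)}_{\Z^3}$, a double pole at $s=2$ for $\zeta^{(m)}_{H}$ and $\zeta^{(m)}_{L_1}$, a simple pole at $s=2$ for $\zeta^{(m)}_{\textrm{sl}_2(\Z)}$, each for $m=1,2,3$.

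The remaining case, $\rk f=2$ with $f$ irreducible over $\Q$, needs a genuinely new argument. I would attach to $f$ the quadratic Dirichlet character $\chi_f$ (generalising the character $\chi$ of Section~\ref{subsec:L2}) with $\chi_f(p)=1$ when $f\otimes\Qp$ is isotropic and $\chi_f(p)=-1$ otherwise; irreducibility of $f$ means $-\mathrm{Disc}(f)$ is a non-square in $\Q^\times$, so $\chi_f$ is non-principal. By the first step, for $p\notin S$ the Euler factor $\zeta^{(m)}_{L(\Zp)}(s)$ coincides with the $p$-th factor of $\zeta^{(m)}_{L_1}$ when $\chi_f(p)=1$ and with the $p\equiv3\bmod4$-type factor of $\zeta^{(m)}_{L_2}$ when $\chi_f(p)=-1$; reading off the formulas of Section~\ref{sec:explicit}, in both cases the part of this factor responsible for singularities at $s=2$ is $(1-p^{1-s})^{-1}(1-\chi_f(p)p^{1-s})^{-1}$ — this being $(1-p^{1-s})^{-2}$ on the split side and $(1-p^{1-s})^{-1}(1+p^{1-s})^{-1}$ on the non-split side — times factors $(1-p^{2-2s})$, $(1-p^{-s})$, $(1-\chi_f(p)p^{1-2s})$ raised to exponents in $\{-1,0,1\}$ depending only on $m$ (all holomorphic and non-vanishing at $s=2$), times a polynomial correction $1+O(p^{-\real(s)})$ whose degree in $p^{-s}$ is bounded independently of $p$. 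Multiplying over all primes then yields
\[
\zeta^{(m)}_L(s)=\zeta(s-1)\,L(s-1,\chi_f)\,H_m(s),
\]
where $H_m(s)$ is, up to the finitely many local factors at $p\in S$, a product of integer powers of $\zeta(2s-2)$, $\zeta(s)$, $L(2s-1,\chi_f)$ (exponents depending only on $m$) and an Euler product converging absolutely in a neighbourhood of $s=2$; in particular $H_m$ is holomorphic and non-zero at $s=2$. Since $\chi_f$ is non-principal, $L(s-1,\chi_f)$ is entire with $L(1,\chi_f)\neq 0$ by Dirichlet's theorem, so the only pole of $\zeta^{(m)}_L$ at $s=2$ is the simple one coming from $\zeta(s-1)$, for each $m=1,2,3$.

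The step I expect to be the main obstacle is this last one: verifying that the two \emph{a priori} distinct rational functions, the $L_1$-type and $L_2$-type local factors, reorganise uniformly — through the single character $\chi_f$ in place of the ad hoc character used for $L_2$ — into $\zeta(s-1)L(s-1,\chi_f)$ times a function holomorphic near $s=2$, and that the residual Euler product converges there. The non-vanishing $L(1,\chi_f)\neq 0$ is precisely what keeps the pole simple: it would merge with the pole of $\zeta(s-1)$ into a double pole exactly when $\chi_f$ is principal, i.e.\ when $f$ is reducible. The first-step reductions are routine but still require care at $p=2$ and at the primes dividing the content or discriminant of $f$, handled by the $p^iL$-form of Theorem~\ref{thm:cozeta.3dim}; and the rank-$3$ claim uses the (perhaps not entirely obvious) fact that multiplying a nondegenerate ternary $\Zp$-form by a nonsquare unit sends its discriminant to the other square class, so that only one similarity class remains.
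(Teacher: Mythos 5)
Your proof is correct and follows essentially the same strategy as the paper's: reduce each local factor to the $\Zp$-similarity class of $f\otimes\Zp$ via Theorem~\ref{thm:cozeta.3dim} and Lemma~\ref{lem:quad_similar}, match that class to one of the model algebras $\Z^3, H, \mathrm{sl}_2(\Z), L_1$, or an $L_2$-type form, and read off pole orders from the explicit global formulas. You phrase the rank-3 step as ``one similarity class of unimodular ternary forms over $\Zp$'' where the paper makes the equivalent explicit computation $Df_p\sim x_1^2+x_2^2+D^4x_3^2$, and you spell out the nonvanishing $L(1,\chi_f)\neq0$ as what keeps the irreducible rank-2 pole simple, which the paper leaves implicit by deferring to the $L_2$ example — these are cosmetic, not substantive, differences.
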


\begin{proof}
As shown in Theorem \ref{thm:cozeta.3dim}, for a prime $p$ the local subalgebra cotype zeta function $\zeta_{L(\Zp)}^{\co}(\bss)$ only depends on the Igusa zeta function $Z_{f_p}(s)$ of the image $f_p$ of $f$ in $\Z_p$. Thus if two $\Z$-Lie algebras $L, L'$ of rank 3 with corresponding quadratic forms $f,g$ satisfies $Z_{f_p}(s) = Z_{g_p}(s)$ for all but finitely many $p$, then $\zeta_{L(\Zp)}^{\co}(\bss)$ is the same as $\zeta_{L'(\Zp)}^{\co}(\bss)$ for all but finitely many $p$. Meanwhile, since two similar quadratic forms have the same Igusa zeta functions, it suffices to show that $f_p \sim g_p$ for all but finitely many $p$.\\

\noindent \textbf{Case 1. $f$ is of rank $0$}: We have $f=0$, and $\Z_p^3$ has the corresponding quadratic form $g=0$. Hence $f_p \sim g_p$ for all $p$ and we get $ \zeta_{L}^{\co}(\bss) = \zeta_{\Z^3}^{\co}(\bss)$.\\

\noindent \textbf{Case 2. $f$ is of rank $1$}: By a change of basis we may assume $f = ax^2$ for $a \neq 0$, and then $f_p \sim x^2$ for $p \nmid a$. Note that $H(\Zp)$ has the corresponding quadratic form $g=x^2$ for all $p$. Hence $\zeta_{L(\Zp)}^{\co}(\bss)$ is the same as $\zeta_{H(\Zp)}^{\co}(\bss)$ for all but finitely many $p$.\\

\noindent \textbf{Case 3. $f$ is of rank $3$}: Let $D = \mathrm{Disc}(f)$. If $p \nmid D$, then $Df_p$ has discriminant $D^4$ and  by Lemma \ref{lem:quad_similar} we get
\[
f_p \sim Df_p \sim x_1^2 + x_2^2 + D^4 x_3^2 \sim x_1^2 + x_2^2+x_3^2.
\]
The Lie algebra $\textrm{sl}_2(\Z)$ has the corresponding quadratic form $g = x_3^2+4x_1 x_2$, and one can  check that $g_p \sim x_1^2+x_2^2+x_3^2$ for $p \nmid \mathrm{Disc}(g)$. Hence $\zeta_{L(\Zp)}^{\co}(\bss)$ is the same as $\zeta_{\textrm{sl}_2(\Zp)}^{\co}(\bss)$ for all but finitely many $p$.\\

\noindent \textbf{Case 4. $f$ is of rank $2$}: We may assume that $f = ax_1^2 + bx_1 x_2 + cx_2^2$ by a change of basis. For $D = b^2-4ac$, $f$ is reducible if and only if $D$ is square in $\Z$.

If $p \nmid 2a$, then one may `complete the square' to show $f_p \sim x_1^2 - D x_2^2$ for $D = b^2-4ac$. If $D$ is square then $x_1^2 - D x_2^2 \sim x_1^2-x_2^2$ for $p \nmid 2aD$, and $L_{1}(\Z)$ has the corresponding quadratic form $g =x_1^2-x_2^2$. Hence $\zeta_{L(\Zp)}^{\co}(\bss)$ is the same as $\zeta_{L_1(\Zp)}^{\co}(\bss)$ for all but finitely many $p$.

Finally, if $D$ is non-square, the Igusa zeta function of $f_p\sim  x_1^2-Dx_2^2$ for $p \nmid D$ can be described as
\[
Z_{f_p}(s-2) = \frac{1-p^{-1}}{1-pt} \frac{1-\chi_D(p)p^{-1}}{1- \chi_D(p) pt},
\]
where $t=p^{-s}$ and $\chi_D(p) = \left( \frac{D}{p} \right)$, using the similar method to the case of $L_2$. The formula $\zeta_{L}^{(m)}(s)$ can be described as almost in the similar way as examples of $\zeta_{L_2}^{(m)}(s)$.
\end{proof}
In particular, one can observe:
\begin{cor}
Let $L$ be a $\Z$-Lie algebra of rank 3. For $1\leq m\leq 3$, $\zeta_{L}^{(m)}(s)$ always have the same value of the largest  pole with the same order.    
\end{cor}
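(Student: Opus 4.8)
The plan is to read the corollary off the preceding theorem, with only a small additional verification. That theorem sorts the corresponding ternary quadratic form $f$ of $L$ into five classes, and in each class it records explicitly that the three Dirichlet series $\zeta_L^{(1)}(s)$, $\zeta_L^{(2)}(s)$, $\zeta_L^{(3)}(s)$ all have a pole at one and the same point --- at $s=3$ when $f$ has rank $0$, and at $s=2$ in the four remaining cases --- and all of one and the same order --- simple when $f$ has rank $0$ or $3$ or is an irreducible rank-$2$ form, and a double pole when $f$ has rank $1$ or is a reducible rank-$2$ form. So the corollary reduces to a single claim: in each class, that common pole is in fact the \emph{rightmost} pole of each $\zeta_L^{(m)}(s)$, so that ``the largest pole'' unambiguously refers to it.

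To establish this I would use the factorizations from Section~\ref{sec:rank3.explicit}. In every case $\zeta_L^{(m)}(s)$ is a finite product of shifted Riemann zeta functions --- and, in the $L_2$-type case, a ratio of Dirichlet $L$-functions $L(s,\chi)$ --- times a ``correction'' which is either a rational function in finitely many $p^{-s}$ or an Euler product whose local factor has the shape $1+O(p^{a-s})$; in each instance one checks that this correction is holomorphic and non-vanishing in a half-plane $\{\operatorname{Re}(s)>\sigma_0\}$ with $\sigma_0$ strictly less than the real part of the claimed pole. Hence the rightmost singularity of $\zeta_L^{(m)}(s)$ is contributed by the $\zeta$- and $L$-factors alone, and reading off the arguments of those factors places it exactly at $s=3$ (resp. $s=2$) with the order dictated by the theorem; the factors $L(s,\chi)$ are entire and non-vanishing there, so they contribute nothing. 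Equivalently, one may argue more structurally: the Dirichlet coefficients satisfy $a_n^{(1)}\le a_n^{(2)}\le a_n^{(3)}$, so by Landau's theorem the abscissae of convergence obey $\sigma_1\le\sigma_2\le\sigma_3$ and each $\sigma_m$ is itself a singularity of $\zeta_L^{(m)}(s)$; since the preceding theorem exhibits all three of these singularities at a single point, the $\sigma_m$ must coincide, and the pole orders are then quoted from the theorem.

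I do not anticipate a genuine obstacle; the work is bookkeeping layered on top of the preceding theorem. The only point deserving care is confirming that the correction Euler products converge \emph{strictly} to the right of the claimed pole rather than merely up to it --- but this is precisely the input already used in Section~\ref{sec:explicit}, where the asymptotics $N_L^{(m)}(X)\sim c_m\,X^{a}(\log X)^{b}$ are derived with the leading exponent $a$ and the logarithmic order $b$ (equivalently, the location and order of the leading pole) manifestly independent of $m$.
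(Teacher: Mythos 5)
Your proposal is correct and follows the paper's intent: the paper offers no separate proof of this corollary, introducing it with ``In particular, one can observe,'' so it is meant to be read directly off the case-by-case pole data recorded in the preceding theorem and off the explicit factorizations in Section~\ref{sec:explicit}. Your added verification that the exhibited pole is in fact the rightmost one---either by checking that the ``correction'' Euler products converge and are non-vanishing strictly to the right of $s=3$ (resp.\ $s=2$), or by the Landau/monotonicity argument $a_n^{(1)}\le a_n^{(2)}\le a_n^{(3)}$ combined with $\sigma_3=s_0$ read off from the explicit formula for $\zeta_L^{(3)}=\zeta_L$---is a reasonable piece of bookkeeping that the paper leaves implicit; it does not constitute a different route, only a more careful writeup of the same one.
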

\section{Further questions}

Instead of counting all sublattices or subalgebras of finite index with given cotype, one may count certain subfamilies of subalgebras with additional algebraic properties in $L$. One prominent example is to count (two-sided) ideals of $L$. 

\begin{dfn}
  We define the  \emph{ideal cotype zeta function of $L$} as the Dirichlet generating series
\begin{align*}
    \zeta_{L}^{\ideal,\,\co}(\bss)=\sum_{\Lambda\ideal L,|L/\Lambda|<\infty}\alpha_1(\Lambda)^{-s_1}\cdots\alpha_{d}(\Lambda)^{-s_{d}},
\end{align*}
enumerating  finite-index (two-sided) ideals of $L$ of given cotype. We also define the \emph{ideal zeta function of $L$} to be $\zeta_{L}^{\ideal}(s)=\zeta_{L}^{\ideal,\,\co}(s,\ldots,s)$.
\end{dfn}
Primary decomposition again yields the Euler product 
		\begin{equation*}
		\zeta_{L}^{\ideal,\,\co}(\bss)
		=\prod_{\mfp\in\Spec(\mcO)\setminus\{(0)\}}\zeta_{L(\Gri_{\mfp})}^{\ideal,\,\co}(\bss).
	\end{equation*}

Similar questions in this article  can also be asked for $\zeta_{L}^{\ideal,\,\co}(\bss)$. For instance, one can analogously extend the definition of the uniformity from that of $\zeta_{L}^{\co}(\bss)$ to $\zeta_{L}^{\ideal,\,\co}(\bss)$. In \cite{duS1}, du Sautoy constructed a $\Z$-Lie algebra $L_E$  of rank 9 whose zeta functions $\zeta_{L_E}(s)$ and $\zeta_{L_E}^{\ideal}(s)$ are known to be non-uniform, and the non-uniformity came from the behavior of the number of the $\Fp$-points of an elliptic curve $E:Y^2=X^3-X$. Since $\zeta_{L_E}(s)=\zeta_{L_E}^{\co}(s,\ldots,s)$ and $\zeta_{L_E}^{\ideal}(s)=\zeta_{L_E}^{\ideal,\,\co}(s,\ldots,s)$ it immediately follows that $\zeta_{L_E}^{\co}(\bss)$ and $\zeta_{L_E}^{\ideal}(s)$ are also non-uniform. How about the converse?
\begin{qun}
For a given $\mcO$-algebra $L$, is the (global) subalgebra cotype zeta function $\zeta_{L}^{\co}(\bss)$  uniform/finitely uniform/non-uniform if and only if $\zeta_{L}(s)$ is uniform/finitely uniform/non-uniform? If not, can we find a $\mcO$-Lie algebra $L$ such that  $\zeta_{L}(s)$ is uniform but $\zeta_{L}^{\co}(\bss)$ is non-uniform? Can we ask the same question for $\zeta_{L}^{\ideal,\,\co}(\bss)$ and  $\zeta_{L}^{\ideal}(s)$?
\end{qun}

It is also known that unlike subalgebra zeta functions, the functional equation does not always hold for ideal zeta functions (cf. \cite{Voll,VollIMRN/19}). There are examples of $\mcO$-Lie algebras whose local ideal zeta functions do not satisfy the local functional equations. In \cite{LV,VollIMRN/19}, the authors proved that  $\zeta_{L(\Gri_{\mfp})}^{\ideal}(s)$ satisfies the local functional equation \emph{if} it satisfies the so-called ``\emph{homogeneity}'' condition (please refer to \cite{LV,VollIMRN/19} for details).

\begin{qun}
    If $L$ satisfies the homogeneity condition, then does $\zeta_{L(\Gri_{\mfp})}^{\ideal,\,\co}(\bss)$ satisfy the local functional equation of the form
     \begin{align*}
    \left.\zeta_{L(\Gri_{\mfp})}^{\ideal,\,\co}(\bss)\right|_{q\rightarrow q^{-1}}=(-1)^{a}q^{b-\sum_{i=1}^{d}c_is_i}\zeta_{L(\Gri_{\mfp})}^{\ideal,\,\co}(\bss)
    \end{align*}
    for all but finitely many prime ideals $\mfp$ of $\mcO$? Can we identify $a,b,c_{i}\in\N$ from $L$?
\end{qun}

Many explicit computations of $\zeta_{L}(s)$ or $\zeta_{L}^{\ideal}(s)$ and their variants for various $R$-algebras are recorded in the literature (please see \cite{duSW,Lee1, LV, Rossmann2}, and references therein). However, at current stage we only have very few explicit computations of $\zeta_{L}^{\co}(\bss)$ or $\zeta_{L}^{\ideal,\,\co}(\bss)$. In a sequel, we are going to provide a general framework of computing ideal cotype zeta functions $\zeta_{L}^{\ideal,\,\co}(\bss)$ of $L$, together with some explicit example like an ideal cotype zeta functions of the Heisenberg algebra $H$. All the questions and further research discussed in this article would benefit from more explicit computations.


\end{document}